\newtheorem{theorem}{Theorem}[section]
\newtheorem{lemma}{Lemma}[section]
\theoremstyle{definition}
\newtheorem{definition}{Definition}
\newtheorem{remark}{Remark}[section]
\numberwithin{equation}{section}
\newcommand{\beq}{\begin{equation}}
\newcommand{\bea}[1]{\begin{array}{#1} }
\newcommand{\eeq}{ \end{equation}}
\newcommand{\ea}{ \end{array}}
\def \F  {{\mathscr {F}}}
\def \O {{\Omega}}
\def \d {{\delta}}
\def\mean#1{\mathchoice%
          {\mathop{\kern 0.2em\vrule width 0.6em height 0.69678ex depth -0.58065ex
                  \kern -0.8em \intop}\nolimits_{\kern -0.4em#1}}%
          {\mathop{\kern 0.1em\vrule width 0.5em height 0.69678ex depth -0.60387ex
                  \kern -0.6em \intop}\nolimits_{#1}}%
          {\mathop{\kern 0.1em\vrule width 0.5em height 0.69678ex
              depth -0.60387ex
                  \kern -0.6em \intop}\nolimits_{#1}}%
          {\mathop{\kern 0.1em\vrule width 0.5em height 0.69678ex depth -0.60387ex
                  \kern -0.6em \intop}\nolimits_{#1}}}
\def\vintslides_#1{\mathchoice%
          {\mathop{\kern 0.1em\vrule width 0.5em height 0.697ex depth -0.581ex
                  \kern -0.6em \intop}\nolimits_{\kern -0.4em#1}}%
          {\mathop{\kern 0.1em\vrule width 0.3em height 0.697ex depth -0.604ex
                  \kern -0.4em \intop}\nolimits_{#1}}%
          {\mathop{\kern 0.1em\vrule width 0.3em height 0.697ex depth -0.604ex
                  \kern -0.4em \intop}\nolimits_{#1}}%
          {\mathop{\kern 0.1em\vrule width 0.3em height 0.697ex depth -0.604ex
                  \kern -0.4em \intop}\nolimits_{#1}}}
\newcommand{\aveint}[2]{\mathchoice%
          {\mathop{\kern 0.2em\vrule width 0.6em height 0.69678ex depth -0.58065ex
                  \kern -0.8em \intop}\nolimits_{\kern -0.45em#1}^{#2}}%
          {\mathop{\kern 0.1em\vrule width 0.5em height 0.69678ex depth -0.60387ex
                  \kern -0.6em \intop}\nolimits_{#1}^{#2}}%
          {\mathop{\kern 0.1em\vrule width 0.5em height 0.69678ex depth -0.60387ex
                  \kern -0.6em \intop}\nolimits_{#1}^{#2}}%
          {\mathop{\kern 0.1em\vrule width 0.5em height 0.69678ex depth -0.60387ex
                  \kern -0.6em \intop}\nolimits_{#1}^{#2}}}
\def\eqn#1$$#2$${\begin{equation}\label#1#2\end{equation}}
\def\charfn_#1{{\raise1.2pt\hbox{$\chi
_{\kern-1pt\lower3pt\hbox{{$\scriptstyle#1$}}}$}}}
\def\diam{\operatorname{diam}}
\def\qq1{q_*}
\def\q2{q_{**}}
\def\dist{\operatorname{dist}}
\def\O{\rm O}
\newdimen\vintbar
\def\vint{-\kern-\vintbar\int}
\def\B{\mathcal B}
\def\F{\mathcal F}
\def\L{\mathcal L}
\def\K{\mathcal K}
\def\O{\mathcal O}
\def\P{\mathcal P}
\def\L{\mathcal L}
\def\W{\mathcal W}
\def\0{\boldsymbol 0}
\newcommand{\R}{\mathbb R}
\newtoks\by
\newtoks\paper
\newtoks\book
\newtoks\jour
\newtoks\yr
\newtoks\pages
\newtoks\vol
\newtoks\publ
\def\name[#1, #2]{#1 #2}
\def\ota{{\hbox{\bf ???}}}
\def\cLear{\by=\ota\paper=\ota\book=\ota\jour=\ota\yr=\ota
\pages=\ota\vol=\ota\publ=\ota}
\def\endpaper{\the\by, \textit{\the\paper},
{\the\jour} \textbf{\the\vol} (\the\yr), \the\pages.\cLear}
\def\endbook{\the\by, \textit{\the\book},
\the\publ, \the\yr.\cLear}
\def\endpap{\the\by, \textit{\the\paper}, \the\jour.\cLear}
\def\endproc{\the\by, \textit{\the\paper}, \the\book, \the\publ,
\the\yr, \the\pages.\cLear}
\renewcommand{\d}{\, \mathrm{d}} 
\begin{document}

\title[Estimates for parabolic measures]{On the fine properties of  parabolic measures\\ associated to strongly degenerate parabolic\\ operators of Kolmogorov type}


\address{Malte Litsg{\aa}rd \\Department of Mathematics, Uppsala University\\
S-751 06 Uppsala, Sweden}
\email{malte.litsgard@math.uu.se}

\address{Kaj Nystr\"{o}m\\Department of Mathematics, Uppsala University\\
S-751 06 Uppsala, Sweden}
\email{kaj.nystrom@math.uu.se}

\thanks{K. N was partially supported by grant  2017-03805 from the Swedish research council (VR)}

\author{Malte Litsg{\aa}rd and Kaj Nystr{\"o}m}
\maketitle
\begin{abstract}
\noindent \medskip
We consider strongly degenerate parabolic operators of the form
              \begin{eqnarray*}
   \L:=\nabla_X\cdot(A(X,Y,t)\nabla_X)+X\cdot\nabla_Y-\partial_t
    \end{eqnarray*}
    in unbounded domains
         \begin{eqnarray*}
 \Omega=\{(X,Y,t)=(x,x_{m},y,y_{m},t)\in\mathbb R^{m-1}\times\mathbb R\times\mathbb R^{m-1}\times\mathbb R\times\mathbb R\mid x_m>\psi(x,y,t)\}.
    \end{eqnarray*}
    We assume that $A=A(X,Y,t)$ is  bounded, measurable and uniformly elliptic (as a matrix in $\mathbb R^{m}$) and concerning $\psi$ and $\Omega$ we assume that $\Omega$ is what we call an (unbounded) Lipschitz domain: $\psi$ satisfies a
    uniform Lipschitz condition adapted to the dilation structure and the (non-Euclidean) Lie group underlying
    the operator $\L$. We prove, assuming in addition that $\psi$ is independent of the variable $y_m$, that $\psi$ satisfies an additional regularity condition formulated in terms of a Carleson measure, and additional conditions on $A$, that the associated parabolic measure is absolutely continuous with respect to a surface measure and that
    the associated Radon-Nikodym derivative defines an $A_\infty$-weight with respect to the surface measure.\\

\noindent
2000  {\em Mathematics Subject Classification.} 35K65, 35K70, 35H20, 35R03.
\noindent

\medskip

\noindent
{\it Keywords and phrases: Kolmogorov equation, parabolic, ultraparabolic, hypoelliptic, operators in divergence form,  Lipschitz domain, doubling measure, parabolic measure, Carleson measure, $A_\infty$, Lie group.}
\end{abstract}

    \setcounter{equation}{0} \setcounter{theorem}{0}
    \section{Background and motivation}
    In this paper we are concerned with the fine properties of parabolic measures, defined with respect to appropriate domains $\Omega$, and associated to  the operator
              \begin{eqnarray}\label{e-kolm-nd}
   \L=\L_A:=\nabla_X\cdot(A(X,Y,t)\nabla_X)+X\cdot\nabla_Y-\partial_t,
    \end{eqnarray}
    in $\mathbb R^{N+1}$, $N=2m$, $m\geq 1$, equipped with coordinates $(X,Y,t):=(x_1,...,x_{m},y_1,...,y_{m},t)\in \mathbb R^{m}\times\mathbb R^{m}\times\mathbb R$.  We assume that $A=A(X,Y,t)=\{a_{i,j}(X,Y,t)\}_{i,j=1}^{m}$ is a real-valued $m\times m$-dimensional symmetric matrix
    satisfying
    \begin{eqnarray}\label{eq2}
      \kappa^{-1}|\xi|^2\leq \sum_{i,j=1}^{m}a_{i,j}(X,Y,t)\xi_i\xi_j,\quad \ \ |A(X,Y,t)\xi\cdot\zeta|\leq \kappa|\xi||\zeta|,
    \end{eqnarray}
    for some $\kappa\in [1,\infty)$, and for all $\xi,\zeta\in \mathbb R^{m}$, $(X,Y,t)\in\mathbb R^{N+1}$. We refer to $\kappa$ as the  constant of $A$. Throughout the paper we will also assume that
        \begin{eqnarray}\label{eq2+}
    a_{i,j}\in C^\infty(\mathbb R^{N+1})
    \end{eqnarray}
    for all $i,j\in\{1,...,m\}$. While the  assumption in \eqref{eq2+} will only be used in a qualitative fashion, the constants of our quantitative estimates will depend on $m$ and $\kappa$.

    The starting point for our analysis is the recent results concerning  the local regularity of weak solutions to the equation $\L u=0$ established in \cite{Ietal}. In \cite{Ietal} the authors extended the  De Giorgi-Nash-Moser (DGNM) theory, which in its original form only considers elliptic
or parabolic equations in divergence form, to hypoelliptic equations with rough coefficients including the ones in \eqref{e-kolm-nd} assuming \eqref{eq2} and, implicitly, also \eqref{eq2+}. Their result is the correct scale- and translation-invariant estimates for local H{\"o}lder continuity and the Harnack inequality for weak solutions.

We recall that the prototype for the operators in \eqref{e-kolm-nd}, i.e. $A\equiv 1_m$ and the operator
    $$\K:=\nabla_X\cdot \nabla_X+X\cdot\nabla_Y-\partial_t,$$
 was originally introduced and studied by
Kolmogorov in a famous note published in 1934 in Annals of Mathematics, see \cite{K}. Kolmogorov noted that $\K$ is an example of a degenerate parabolic
operator having strong regularity properties and he proved that $\K$ has a fundamental solution which is smooth off its diagonal.
As a consequence,
\begin{eqnarray}\label{uu3}
  \K u = f \in C^\infty \quad \Rightarrow \quad u \in C^\infty,
\end{eqnarray}
for every distributional solution of $\K u=f$. These days, using the terminology introduced by H{\"o}rmander, see \cite{Hm}, the property in \eqref{uu3} is stated
\begin{eqnarray}\label{uu2}
\mbox{$\K$ is hypoelliptic}.
\end{eqnarray}
Naturally, for operators as in \eqref{e-kolm-nd}, assuming only measurable coefficients and \eqref{eq2}, the methods of Kolmogorov and H{\"o}rmander can not be directly applied to establish the DGNM theory and related estimates.

The results in \cite{Ietal} represent  an important achievement which paves the way for developments concerning
operators as in \eqref{e-kolm-nd} in several fields of analysis and in the theory of PDEs. In this paper we contribute to the understanding of the fine properties of the Dirichlet problems for operators of the form stated in \eqref{e-kolm-nd}
    in appropriate domains $\Omega\subset\mathbb R^{N+1}$ and we note that in general there is a rich interplay between the operators considered, applications and geometry. Indeed, today the Kolmogorov operator, and the more general operators of Kolmogorov-Fokker-Planck type with variable coefficients considered in this paper, play central roles in many application in analysis, physics and finance and depending on the application different model cases for the local geometry of $\Omega$ may be relevant:
    \begin{eqnarray}\label{dom-mod}
 (i)&&\{(X,Y,t)=(x,x_{m},y,y_{m},t)\in\mathbb R^{N+1}\mid x_m>\psi_1(x,Y,t)\},\notag\\
 (ii)&&\{(X,Y,t)=(x,x_{m},y,y_{m},t)\in\mathbb R^{N+1}\mid y_m>\psi_2(X,y,t)\},\\
 (iii)&&\{(X,Y,t)=(x,x_{m},y,y_{m},t)\in\mathbb R^{N+1}\mid t>\psi_3(X,Y)\}.\notag
\end{eqnarray}
In particular, in finance and in the context of option pricing and associated free boundary problems, case $(i)$ is relevant. In kinetic theory it is relevant to restrict the particles to a container making case $(ii)$ relevant. Case $(iii)$ captures, as a special case, the initial value or Cauchy problem.

In this paper we consider solutions to $\L u=0$ in  $\Omega$ assuming \eqref{eq2} and \eqref{eq2+}. Concerning $\Omega$ we restrict ourselves to case $(i)$ and unbounded domains $\Omega\subset\mathbb R^{N+1}$ of the form
\begin{eqnarray}\label{dom-}
 \Omega=\{(X,Y,t)=(x,x_{m},y,y_{m},t)\in\mathbb R^{N+1}\mid x_m>\psi(x,y,t)\}.
    \end{eqnarray}
    We impose restrictions on $\psi$ of Lipschitz character and the importance of the additional assumption that $\psi$ is independent of $y_m$ will be explained.

    Assuming that $\Omega\subset\mathbb R^{N+1}$ is a (unbounded) Lipschitz domain in the sense of
    Definition \ref{car} below, it follows that given $\varphi\in
C_0(\partial\Omega)$, there exists
a  unique  weak solution  $u=u_\varphi$, $u\in C(\bar \Omega)$, to the Dirichlet problem
\begin{equation} \label{e-bvpuu}
\begin{cases}
	\L u = 0  &\text{in} \ \Omega, \\
      u = \varphi  & \text{on} \ \partial \Omega.
\end{cases}
\end{equation}
Furthermore, there exists, for every $(Z, t):=(X,Y,t)\in \Omega$, a unique probability
measure  $\omega(Z,t,\cdot)$ on $\partial\Omega$ such that
\begin{eqnarray}  \label{1.1xxuu}
u(Z,t)=\iint_{\partial\Omega}\varphi(\tilde Z,\tilde t)\d \omega(Z,t,\tilde Z,\tilde t).
\end{eqnarray}
The measure $\omega(Z,t,E)$ is referred to as the parabolic measure associated to $\L$ in $\Omega$ and at $(Z, t)\in \Omega$ and of $E\subset\partial\Omega$. Properties of $\omega(Z,t,\cdot)$ govern the Dirichlet problem in \eqref{e-bvpuu}.

If  $\Omega=\Omega_\psi\subset\mathbb R^{N+1}$ is an unbounded ($y_m$-independent) Lipschitz domain we introduce the (physical) measure
  $\sigma$ on $\partial\Omega$ as
 \begin{eqnarray}\label{surfac+}d\sigma(X,Y,t):=\sqrt{1+|\nabla_{x}\psi(x,y,t)|^2}\d x\d Y\d t,\ (X,Y,t)\in\partial\Omega.
 \end{eqnarray}
We will refer to  $\sigma$ as the surface measure on $\partial\Omega$.

Two fundamental questions concerning $\omega(Z,t,\cdot)$ can be stated as follows. Under what assumptions on $A$  and $\psi$, $\Omega$ as in \eqref{dom-}, is it true that
\begin{align}  \label{problems}
(i)&\mbox{ $\omega(Z,t,\cdot)$ is a doubling measure ?}\notag\\
(ii)&\mbox{ $\omega(Z,t,\cdot)$ satisfies scale-invariant absolute continuity estimates with respect}\\
&\mbox{ to the physical (surface) measure $\sigma$ on $\partial\Omega$ ?}\notag
\end{align}

In \cite{LN} we developed a potential theory for operators $\L$ as in \eqref{e-kolm-nd}, assuming only \eqref{eq2} and \eqref{eq2+}, in unbounded ($y_m$-independent) Lipschitz domains in the sense of Definition \ref{car} below. As part of this theory we proved that $\omega(Z,t,\cdot)$ is a doubling measure, hence establishing \eqref{problems} $(i)$. The additional assumption that the function $\psi$ defining the domain is independent of $y_m$ was cruical in this part of \cite{LN}. In this paper we refine the result of \cite{LN} considerably by proving, under additional assumptions on $\psi$ ( i.e. on $\Omega$) and the coefficients $A$, that $\omega(Z,t,\cdot)$ defines an $A_\infty$ weight with respect to the surface measure $\sigma$ in \eqref{surfac+} giving a quantitative answer to \eqref{problems} $(ii)$.  In the prototype case $A\equiv 1_m$, i.e. in the case of the operator $\K$, the corresponding results were established in \cite{NP} and \cite{N1}, respectively, and this seems to be the only previous results of their kind for operators of Kolmogorov type.

To put the  results of \cite{LN} and this paper into perspective it is relevant to outline the progress on the corresponding problems in the case of uniformly parabolic equations in $\mathbb R^{m+1}$, i.e. in the case when all dependence on the variable $Y$ is removed in \eqref{e-kolm-nd} leaving us with the operator
              \begin{eqnarray}
\nabla_X\cdot(A(X,t)\nabla_X)-\partial_t.
    \end{eqnarray}
    In this setting  the questions in \eqref{problems} have in recent times been discussed and resolved in a number of fundamental papers and  we here highlight the main contributions to the field.

First,  for uniformly parabolic equations with bounded measurable coefficients in Lipschitz type domains, scale and translation invariant boundary comparison principles,
boundary Harnack inequalities and doubling properties of associated parabolic measures  were settled in a number of fundamental papers including \cite{FS}, \cite{FSY}, \cite{SY}, \cite{FGS} and \cite{N}. This type of results find their applications in many fields of analysis including the analysis of free boundary problems, see \cite{C1}, \cite{C2} and \cite{ACS} for instance.

Second, in \cite{LS}, \cite{LM}, \cite{HL}, \cite{H}, see also \cite{HL1}, the correct notion
    of time-dependent Lipschitz type cylinders, correct from the perspective of  parabolic measure, parabolic singular integral operators, parabolic
    layer potentials, as well as from the perspective of the Dirichlet, Neumann and Regularity
problems with data in $L^p$ for the heat operator, was found.  In particular, in \cite{LS}, \cite{LM} the mutual absolute continuity of the
parabolic measure with respect to surface measure, and the $A_\infty$-property, was studied/established and in \cite{HL} the authors
solved the Dirichlet, Neumann and Regularity
problems with data in $L^2$.  For further related results concerning the fine
properties of parabolic measures  we refer to the impressive and influential work \cite{HL2}. In \cite{HL2} the authors consider equations modeled on certain refined pull-backs of the heat operator to the  parabolic upper half space $\mathbb R^{m+1}_+=\{(x,x_m,t)\mid x_m>0\}$. These  pull-back operators take the form
\begin{eqnarray}\label{e-kolm-ndfl+a}
              \nabla_X\cdot(A\nabla_Xu)+B\nabla_Xu-\partial_tu=0,
    \end{eqnarray}
    where  the coefficient $B$ now gives rise to a singular drift term  and the regularity of $A$ and $B$ are measured using certain Carleson measures. The singular drift term complicates matters considerably as there seem to be no positive answer to \eqref{problems} $(i)$ in this case. It should be mentioned that in
    \cite{NR}, \cite{DPP}, parts of \cite{HL2} have been simplified.

    Third, very recently there has been significant progress in the theory of boundary value problems for second order parabolic equations (and systems) of the form
    \begin{eqnarray}\label{eq1}
\nabla_X\cdot(A(x,t)\nabla_Xu)-\partial_tu=0,
    \end{eqnarray}
    in the   parabolic upper half space $\mathbb R_+^{m+1}$ with boundary determined by $x_m=0$, assuming only bounded, measurable, uniformly elliptic and complex coefficients. In~\cite{N2, CNS, N3}, the solvability for Dirichlet, Regularity and Neumann problems with data in $L^2$ were established for the class of parabolic equations \eqref{eq1} under the additional assumptions that the elliptic part is also independent of the time variable $t$ and that it has either constant (complex) coefficients, real symmetric coefficients, or small perturbations thereof. Focusing on parabolic measure, a particular consequence of Theorem 1.3 in~\cite{CNS} is the generalization of~\cite{FSa} to equations of the form \eqref{eq1} but with $A$ real, symmetric and time-independent. This analysis in \cite{N2, CNS, N3} was advanced further in~\cite{AEN}, where a first order strategy to study boundary value problems of parabolic systems with second order elliptic part in the upper half-space was developed. The outcome of~\cite{AEN} was the possibility to address arbitrary parabolic equations (and systems) as in \eqref{eq1} with coefficients depending also on time and on the transverse variable with additional transversal regularity. Finally, in \cite{AEN1} the authors consider parabolic equations as in \eqref{eq1}, assuming that the coefficients are real, bounded, measurable, uniformly elliptic, but not necessarily symmetric. They prove that the associated parabolic measure is  absolutely continuous with respect to the surface measure on
    $\mathbb R^{m+1}$ (i.e. $\mathrm{d} x\d t$) in the sense defined by the Muckenhoupt class $A_\infty(\mathrm{d} x\d t)$.

    In light of the above outline concerning the progress on uniformly parabolic equations, \cite {LN} and the main result of this paper,  Theorem \ref{Ainfty} stated below, represent important steps towards a corresponding theory concerning the Dirichlet problem for operators of Kolmogorov type with bounded and measurable coefficients in Lipschitz type domains adapted to the (non-Euclidean) group structure.

    The rest of the paper is organized as follows. Section \ref{sec2} is of preliminary nature. In Section \ref{sec3} we state our main result, Theorem \ref{Ainfty}, the proof of which we start in
    Section \ref{sec4}. In Section \ref{sec4} we prove how Theorem \ref{Ainfty} can be reduced to three lemmas: Lemmas \ref{existcover}-\ref{lemmacruc+}. We consider the proof of Lemma \ref{lemmacruc+} a rather  difficult part in the proof of Theorem \ref{Ainfty} and in Section \ref{sec4} we show that this lemma can be reduced to one key lemma: Lemma \ref{Carleson}. Section \ref{sec5} is devoted to the proof of Lemma \ref{Carleson}.  Finally, in Section \ref{sec6} we prove Lemma \ref{existcover} and Lemma \ref{lemmacruc} by partially relying on a number of estimates for non-negative solutions recently established in \cite{LN}.

     \setcounter{equation}{0} \setcounter{theorem}{0}
    \section{Preliminaries}\label{sec2}

\subsection{Group law and metric}  The natural family of dilations for $\L$, $(\delta_r)_{r>0}$, on $\R^{N+1}$,
is defined by
\begin{equation}\label{dil.alpha.i}
 \delta_r (X,Y,t) =(r X, r^3 Y,r^2 t),
\end{equation}
for $(X,Y,t) \in \R^{N +1}$,  $r>0$.  Our class of operators  is closed under the  group law
\begin{equation}\label{e70}
 (\tilde Z,\tilde t)\circ (Z,t)=(\tilde X,\tilde Y,\tilde t)\circ (X, Y,t)=(\tilde X+X,\tilde Y+Y-t\tilde X,\tilde t+t), 
\end{equation}
where $(Z,t),\ (\tilde Z,\tilde t)\in \R^{N+1}$. Note that
\begin{equation}\label{e70+}
(Z,t)^{-1}=(X,Y,t)^{-1}=(-X,-Y-tX,-t),
\end{equation}
and hence
\begin{equation}\label{e70++}
 (\tilde Z,\tilde t)^{-1}\circ (Z,t)=(\tilde  X,\tilde  Y,\tilde  t)^{-1}\circ (X,Y,t)=(X-\tilde  X,Y-\tilde  Y+(
t-\tilde  t)\tilde  X,t-\tilde  t),
\end{equation}
whenever $(Z,t),\ (\tilde Z,\tilde t)\in \R^{N+1}$.  Given $(Z,t)=(X,Y,t)\in \R^{N+1}$ we let \begin{equation}\label{kolnormint}
  \|(Z, t)\|= \|(X,Y, t)\|:=|(X,Y)|\!+|t|^{\frac{1}{2}},\ |(X,Y)|=\big|X\big|+\big|Y\big|^{1/3}.
\end{equation}
We recall the following pseudo-triangular
inequality: there exists a positive constant ${c}$ such that
\begin{eqnarray}\label{e-ps.tr.in}
 \|(Z,t)^{-1}\|\le {c}  \| (Z,t) \|,\quad \|(Z,t)\circ (\tilde Z,\tilde t)\| \le  {c}  (\| (Z,t) \| + \| (\tilde Z,\tilde t)
\|),
\end{eqnarray}
whenever $(Z,t),(\tilde Z,\tilde t)\in \R^{N+1}$. Using \eqref{e-ps.tr.in} it  follows directly that
\begin{equation} \label{e-triangularap}
    \|(\tilde Z,\tilde t)^{-1}\circ (Z,t)\|\le c \, \|(Z,t)^{-1}\circ (\tilde Z,\tilde t)\|,
\end{equation}
whenever $(Z,t),(\tilde Z,\tilde t)\in \R^{N+1}$. Let
\begin{equation}\label{e-ps.distint}
    d((Z,t),(\tilde Z,\tilde t)):=\frac 1 2\bigl( \|(\tilde Z,\tilde t)^{-1}\circ (Z,t)\|+\|(Z,t)^{-1}\circ (\tilde Z,\tilde t)\|).
\end{equation}
Using \eqref{e-triangularap} it follows that
\begin{equation}\label{e-ps.dist}
   \|(\tilde Z,\tilde t)^{-1}\circ (Z,t)\|\approx d((Z,t),(\tilde Z,\tilde t))\approx \|(Z,t)^{-1}\circ (\tilde Z,\tilde t)\|
\end{equation}
for all $(Z,t),(\tilde Z,\tilde t)\in \R^{N+1}$ and with uniform constants. Again using \eqref{e-ps.tr.in} we also see that
\begin{equation} \label{e-triangular}
    d((Z,t),(\tilde Z,\tilde t))\le {c} \bigl(d((Z,t),(\hat Z,\hat t))+d((\hat Z,\hat
t),(\tilde Z,\tilde t))\bigr ),
\end{equation}
whenever $(Z,t),(\hat Z,\hat t),(\tilde Z,\tilde t)\in \R^{N+1}$, and hence $d$ is a symmetric quasi-distance. Based on $d$ we introduce the balls
\begin{equation}\label{e-BKint}
    \mathcal{B}_r(Z,t):= \{ (\tilde Z,\tilde t) \in\mathbb R^{N+1} \mid d((\tilde Z,\tilde t),(Z,t)) <
r\},
\end{equation}
for $(Z,t)\in \R^{N+1}$ and $r>0$.  The measure of the ball $\mathcal{B}_r(Z,t)$ is  $|\mathcal{B}_r(Z,t)|\approx r^{{\bf q}}$, independent of $(Z,t)$, and where
$${\bf q}:=4m+2.$$
Similarly, given $(z,t)=(x,y,t)\in \mathbb R^{N-1}=\mathbb R^{m-1}\times\mathbb R^{m-1}\times\mathbb R$ we let
\begin{equation}\label{e-BKint+}
    \mathcal{B}_r(z,t):= \{ (\tilde z,\tilde t) \in\mathbb R^{N-1} \mid d((\tilde x,0,\tilde y, 0,\tilde t),(x,0,y,0,t)) <
r\}.
\end{equation}
The measure of the ball $\mathcal{B}_r(z,t)$ is $|\mathcal{B}_r(z,t)|\approx r^{{\bf q}-4}$, independent of $(z,t)$. We will by $\mathcal{B}_r(Z,t)$  always denote a ball in $\R^{N+1}$, with capital $Z$, and by $\mathcal{B}_r(z,t)$ we will always denote a ball in $\R^{N-1}$, with lowercase $z$.

\subsection{Geometry}  We consider domains of the form stated in \eqref{dom-} and we here outline the assumptions we impose on the defining function $\psi$. Let
    $\P\in C_0^\infty(\mathcal{B}_1(0,0))$, $\mathcal{B}_1(0,0)\subset\mathbb R^{N-1}$, be a standard approximation of the identity. Let
    $$\P_\lambda(x,y,t)=\lambda^{-{(\bf q}-4)}\P(\lambda^{-1}x,\lambda^{-3}y,\lambda^{-2}t),$$ for $\lambda>0$.  Given a function $f$ defined on
$\mathbb R^{N-1}$ we let
\begin{eqnarray}\label{eq1vi}
\P_{\lambda}f(x,y, t)&:=&\iint_{\R^{N-1}}f(\bar x,\bar  y,\bar t)\P_\lambda((\bar  x,\bar y, \bar t)^{-1}\circ (x,y, t))\, \d\bar  x \d\bar  y \d\bar  t\notag\\
&=&\iint_{\R^{N-1}}f(\bar x,\bar  y,\bar t)\P_\lambda(x-\bar x,y-\bar y+(t-\bar t)\bar x,t-\bar t)\, \d\bar  x \d\bar  y \d\bar  t.
\end{eqnarray}
$\P_{\lambda}f$ represents a regularization of $f$. Given $(\tilde z, \tilde t)\in \R^{N-1}$, $\lambda>0$, we let $\gamma_\psi(\tilde z, \tilde t,\lambda)$ denote the number
\begin{eqnarray*}\label{eq1apa}
\biggl (\lambda^{-{({\bf q}-4)}}\iint_{\mathcal{B}_\lambda(\tilde z, \tilde t)}\biggl |\frac {\psi(\bar x,\bar y,\bar t)-\psi(\tilde x,\tilde y,\tilde t)-\P_\lambda(\nabla_x\psi)(\tilde x,\tilde y,\tilde t)(\bar x-\tilde x)}{\lambda}\biggr |^2\, \d\bar x\d\bar y\d\bar t\biggr )^{1/2}.
\end{eqnarray*}

   \begin{definition}\label{car} Assume that there exist constants $0<M_1,M_2<\infty$, such that
         \begin{eqnarray}\label{Lip-++a1}
|\psi(z,t)-\psi(\tilde z,\tilde t)|\leq M_1||(\tilde z,\tilde t)^{-1}\circ(z,t)||,
\end{eqnarray}
whenever $(z,t),\ (\tilde z,\tilde t)\in\mathbb R^{N-1}$ and such that
      \begin{eqnarray}\label{Lip-++a2}
\sup_{(z,t)\in\mathbb R^{N-1},\ r>0}\quad r^{-{({\bf q}-4)}}\int_0^r\iint_{\mathcal{B}_\lambda(z,t)}\gamma_\psi^2(\tilde z, \tilde t,\lambda)\, \frac {\d\tilde z\d\tilde t\d\lambda}\lambda\leq M_2.
\end{eqnarray}
Let $\Omega=\Omega_\psi$ be defined as in \eqref{dom-}. We say that $\Omega$, defined by a function $\psi$ satisfying \eqref{Lip-++a1}, is
 a ($y_m$-independent) Lipschitz domain  with constant $M_1$. We say that $\Omega$, defined by a function $\psi$ satisfying \eqref{Lip-++a1} and \eqref{Lip-++a2}, is an admissible ($y_m$-independent) Lipschitz domain  with constants $(M_1,M_2)$.
\end{definition}

Given $\rho>0$ and $\Lambda>0$ we introduce points of reference in $\mathbb R^{m-1}\times\mathbb R\times\mathbb R^{m-1}\times\mathbb
R\times\mathbb R$,
\begin{align}\label{pointsref2}
A_{\rho,\Lambda}^\pm:= \left(0,\Lambda\rho,0,\mp\tfrac 2 3\Lambda\rho^3,\pm\rho^2\right),\ A_{\rho,\Lambda}&:=\left(0,\Lambda\rho,0,0,0\right).
\end{align}
Given $(Z_0,t_0)\in\mathbb R^{N+1}$ we let $$A_{\rho,\Lambda}^\pm(Z_0,t_0):=(Z_0,t_0)\circ A_{\rho,\Lambda}^\pm,\ A_{\rho,\Lambda}(Z_0,t_0):=(Z_0,t_0)\circ A_{\rho,\Lambda}.$$

\subsection{Dyadic grids, Whitney cubes and Carleson boxes}\label{dya}     Assuming that $\Omega=\Omega_\psi\subset\mathbb R^{N+1}$ is a Lipschitz domain,  with constant $M_1$, in the sense of Definition \ref{car}, we let
$$\Sigma:=\partial \Omega=\{(x,x_{m},y,y_{m},t)\in\mathbb R^{N+1} \mid x_m=\psi(x,y,t)\}.$$
Then  $(\Sigma,d,d\sigma)$, where the symmetric quasi-distance $d$ was introduced in \eqref{e-ps.distint}, is a space of homogeneous type in the sense of \cite{CW} with homogeneous dimension  ${\bf q}-1$. Furthermore, $(\mathbb R^{N+1},d,dZdt)$ is also a space of homogeneous type in the sense of \cite{CW}, but with homogeneous dimension  ${\bf q}$. By the results in \cite{Ch} there exists what we here will refer to as a dyadic grid on
 $\Sigma$ having a number of important properties in relation to $d$. To formulate this we introduce, for any $(Z,t)=(X,Y,t)\in\Sigma$ and
$E\subset \Sigma$,
\begin{equation}
   {\rm dist} ((Z,t),E):=\inf \{ d((Z,t),(\tilde Z,\tilde t)) \mid (\tilde Z,\tilde t)\in E\},
\end{equation}
and we let
\begin{equation}
    \diam(E):=\sup \{ d((Z,t),(\tilde Z,\tilde t)) \mid (Z,t),\ (\tilde Z,\tilde t)\in E\}.
\end{equation}
Using  \cite{Ch} we can conclude that there exist
constants $ \alpha>0,\, \beta>0$ and $c_*<\infty$,  such that for each $k \in \mathbb{Z}$
there exists a collection of Borel sets, $\mathbb{D}_k$,  which we will call cubes, such that
$$
\mathbb{D}_k:=\{Q_{j}^k\subset\Sigma \mid j\in \mathfrak{I}_k\},$$ where
$\mathfrak{I}_k$ denotes some  index set depending on $k$, satisfying
\begin{eqnarray}\label{cubes}
(i)&&\mbox{$\Sigma=\cup_{j}Q_{j}^k\,\,$ for each
$k\in{\mathbb Z}$.}\notag\\
(ii)&&\mbox{If $m\geq k$ then either $Q_{i}^{m}\subset Q_{j}^{k}$ or
$Q_{i}^{m}\cap Q_{j}^{k}=\emptyset$.}\notag\\
(iii)&&\mbox{For each $(j,k)$ and each $m<k$, there is a unique
$i$ such that $Q_{j}^k\subset Q_{i}^m$.}\notag\\
(iv)&&\mbox{$\diam\big(Q_{j}^k\big)\leq c_* 2^{-k}$.}\notag\\
(v)&&\mbox{Each $Q_{j}^k$ contains $\Sigma\cap \mathcal{B}_{\alpha2^{-k}}(Z^k_{j},t^k_{j})$ for some $(Z^k_{j},t^k_j)\in\Sigma$.}\notag\\
(vi)&&\mbox{$\sigma(\{(Z,t)\in Q^k_j\mid{\rm dist}((Z,t),\Sigma\setminus Q^k_j)\leq \rho \,2^{-k}\big\})\leq
c_*\,\rho^\beta\,\sigma(Q^k_j),$}\notag\\
&&\mbox{for all $k,j$ and for all $\rho\in (0,\alpha)$.}
\end{eqnarray}
In the setting of a general space of homogeneous type, this result is due to Christ
\cite{Ch}, with the
dyadic parameter $1/2$ replaced by some constant $\delta \in (0,1)$. In fact, one may always take $\delta = 1/2$, see \cite[Proof of Proposition 2.12]{HMMM}.  We shall denote by  $\mathbb{D}=\mathbb{D}(\Sigma)$ the collection of all
$Q^k_j$, i.e. $$\mathbb{D} := \cup_{k} \mathbb{D}_k.$$
Note that \eqref{cubes} $(iv)$ and $(v)$ imply that for each cube $Q\in\mathbb{D}_k$,
there is a point $(Z_Q,t_Q)=(X_Q,Y_Q,t_Q)\in \Sigma$, and  a ball $\mathcal{B}_{r}(Z_Q,t_Q)$ such that
$r\approx 2^{-k} \approx {\rm diam}(Q)$
and \begin{equation}\label{cube-ball}
\Sigma\cap\mathcal{B}_{r}(Z_Q,t_Q)\subset Q \subset \Sigma\cap \mathcal{B}_{cr}(Z_Q,t_Q),\end{equation}
for some uniform constant $c$. We will denote the associated surface ball by
\begin{equation}\label{cube-ball2}
\Delta_Q:= \Sigma\cap \mathcal{B}_{r}(Z_Q,t_Q)\end{equation}
and we shall refer to the point $(Z_Q,t_Q)$ as the center of $Q$. Given a dyadic cube $Q\subset\Sigma$, we define its $\gamma$ dilate  by
\begin{equation}\label{dilatecube}
\gamma Q:= \Sigma\cap  \mathcal{B}_{\gamma \diam(Q)}(Z_Q,t_Q).
\end{equation}
For a dyadic cube $Q\in \mathbb{D}_k$, we let $\ell(Q) = 2^{-k}$, and we shall refer to this quantity as the length
of $Q$.  Clearly, $\ell(Q)\approx \diam(Q).$ For a dyadic cube $Q \in \mathbb{D}$, we let $k(Q)$ denote the dyadic generation
to which $Q$ belongs, i.e. we set  $k = k(Q)$ if
$Q\in \mathbb{D}_k$, thus, $\ell(Q) =2^{-k(Q)}$.  For any $Q\in \mathbb D(\Sigma)$, we set $\mathbb D_Q:= \{Q'\in\mathbb D \mid Q'\subset Q\}\,.$

Using that also $(\mathbb R^{N+1},d,dZdt)$ is a space of homogeneous type we see that we can partition
$\Omega$ into a collection
of (closed) dyadic Whitney cubes $\{I\}$, in the following denoted   $\mathcal{W}=\W(\Omega)$, such that the cubes in $\mathcal{W}$
form a covering of $\Omega$ with non-overlapping interiors, and
\begin{equation}\label{eqWh1} 4\, {\rm{diam}}\,(I)\leq \dist(4 I,\Sigma) \leq  \dist(I,\Sigma) \leq 40 \, {\rm{diam}}\,(I)\end{equation}
and
\begin{equation}\label{eqWh2}\diam(I_1)\approx \diam(I_2), \mbox{ whenever $I_1$ and $I_2$ touch.}
\end{equation}
Given $I\in \mathcal{W}$ we let $\ell(I)$ denote its size. Given $Q\in \mathbb D(\Sigma)$  we set
\begin{equation}\label{eq2.1}
\W_Q:= \left\{I\in \W\mid \,100^{-1} \ell(Q)\leq \ell(I)
\leq 100\,\ell(Q),\, {\rm and}\, \dist(I,Q)\leq 100\, \ell(Q)\right\}.
\end{equation}
We fix a small, positive parameter $\tau$, and given $I\in\W$,
we let
\begin{equation}\label{eq2.3*}I^* =I^*(\tau) := (1+\tau)I
\end{equation}
denote the corresponding ``fattened" Whitney cube. Choosing $\tau$ small we see that the cubes $I^*$ will retain the usual properties of Whitney cubes;
in particular, that
$$\diam(I) \approx \diam(I^*) \approx \dist(I^*,\Sigma) \approx \dist(I,\Sigma)\,.$$
We then define a  Whitney region
with respect to $Q$ by setting
\begin{equation}\label{eq2.3}
U_Q:= \bigcup_{I\in \W_Q}I^*\,. 
\end{equation}
Given $Q\in \mathbb D(\Sigma)$  we let
\begin{equation}\label{eq2.box-}
T_Q:={\rm int}\left( \bigcup_{Q'\in \mathbb D_Q} U_{Q'}\right),
\end{equation}
denote  the Carleson box
associated to $Q$. Furthermore, given $\gamma\geq 1$ we let
\begin{equation}\label{eq2.box}
T_{\gamma Q}:={\rm int}\left( \bigcup_{Q':\ Q'\cap (\gamma Q)\neq \emptyset} U_{Q'}\right),
\end{equation}
denote  the Carleson set
associated to the $\gamma$ dilate of $Q$. Finally, given $Q\in\mathbb{D}$ and $\Lambda>0$, we let
\begin{equation}\label{pointsref2apa}
\begin{split}
A_{Q,\Lambda}^\pm&:=(Z_Q,t_Q)\circ (0,\Lambda l(Q),0,\mp\frac 2 3\Lambda l(Q)^3,\pm l(Q)^2),\\
A_{Q,\Lambda}&:=(Z_Q,t_Q)\circ (0,\Lambda l(Q),0,0,0).
\end{split}
\end{equation}

\subsection{Weak solutions} Consider $U_X\times U_Y\times J\subset\mathbb R^{N+1}$ with $U_X\subset\mathbb R^{m}$, $U_Y\subset\mathbb R^{m}$ being bounded domains, i.e, open, connected and bounded sets, and $J=(a,b)$ with $-\infty<a<b<\infty$. Then
    $u$ is said to be a weak solution to the equation
                  \begin{eqnarray}\label{e-kolm-nd-}
   \L u=\nabla_X\cdot(A(X,Y,t)\nabla_Xu)+X\cdot\nabla_Yu-\partial_tu=0,
    \end{eqnarray}
    in $U_X\times U_Y\times J\subset\mathbb R^{N+1}$ if
                      \begin{eqnarray}\label{weak1}
                      u\in L_{Y,t}^2(U_Y\times J,H_X^1(U_X)),
    \end{eqnarray}
    and
                          \begin{eqnarray}\label{weak2}
                          -X\cdot\nabla_Yu+\partial_tu\in  L_{Y,t}^2(U_Y\times J,H_X^{-1}(U_X)),
    \end{eqnarray}
    and if $\L u=0$ in the sense of distributions, i.e,
                              \begin{eqnarray}\label{weak3}
                              \iiint_{}\ \bigl(A(X,Y,t)\nabla_Xu\cdot \nabla_X\phi+(X\cdot \nabla_Y\phi)u-u\partial_t\phi\bigr )\, \d X \d Y \d t=0,
                               \end{eqnarray}
                               whenever $\phi\in C_0^\infty(U_X\times U_Y\times J)$.

We say that $u$ is a weak solution
to the equation $\L u=0$ in $\Omega$ if $u$ is a weak solution to $\L u=0$ in $U_X\times U_Y\times J\subset\mathbb R^{N+1}$, where $U_X\subset\mathbb R^{m}$, $U_Y\subset\mathbb R^{m}$ are  bounded domains, and $J=(a,b)$ with $-\infty<a<b<\infty$, whenever $U_X\times U_Y\times J$ is compactly contained in $\Omega$.

        \setcounter{equation}{0} \setcounter{theorem}{0}
    \section{Statement of the main result}\label{sec3}
Assume that $\Omega=\Omega_\psi\subset\mathbb R^{N+1}$ is a Lipschitz domain,  with constant $M_1$, in the sense of Definition \ref{car}, and recall that $\mathbb{D}$ is the set of dyadic cubes on $\partial\Omega$. Given $Q\in\mathbb{D}$, recall the definitions of $l(Q)$, $(Z_Q,t_Q)$, $\gamma Q$, $T_Q$, $A_{Q,\Lambda}^\pm$, introduced in Subsection \ref{dya}.

Using this notation a version of one of the main results (namely, Theorem 3.6) proved in \cite{LN} can be stated as follows.

\begin{theorem}\label{dub} Let $\Omega\subset\mathbb R^{N+1}$ is an unbounded ($y_m$-independent) Lipschitz domain with constant $M_1$ in the sense of Definition \ref{car}. Assume that  $A$ satisfies \eqref{eq2} with constant $\kappa$, \eqref{eq2+} and that
\begin{eqnarray}\label{struct}
A(X,Y,t)=A(x,x_m,y,y_m,t)=A(x,x_m,y,t)
\end{eqnarray}
whenever $(x,x_{m},y,y_{m},t)\in\mathbb R^{N+1}$, i.e. also $A$ is assumed to be independent of the variable $y_m$.  Then there exist
 $\Lambda=\Lambda(m,M_1)$, $1\leq \Lambda<\infty$,   $c=c(m,\kappa,M_1)$,  $1\leq c<\infty$, such that the following is true. Consider $Q_0\in\mathbb{D}$ and let $\omega(\cdot):=\omega\bigl (A_{cQ_0,\Lambda}^+,\cdot\bigr )$. Then
\begin{eqnarray*}
\omega\bigl (2Q\bigr )\leq
c\omega\bigl (Q\bigr )
\end{eqnarray*}
for all $Q\in\mathbb{D}$ such that $4Q\subset Q_0$.
\end{theorem}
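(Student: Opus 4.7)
The plan is to adapt to the Kolmogorov setting the Kemper--Fabes--Garofalo--Salsa argument for doubling of parabolic measure in Lipschitz domains \cite{FGS}, \cite{N}. The structure is quite transparent once two ingredients are in place: (i) a uniform non-degeneracy estimate $\omega(A^+_{Q,\Lambda},Q)\geq c_0>0$ at the forward corkscrew point of $Q$; and (ii) a Harnack chain of bounded length joining $A^+_{cQ_0,\Lambda}$ to $A^+_{Q,\Lambda}$ inside $\Omega$, valid whenever $4Q\subset Q_0$ and $c$ is chosen large enough. Granted (i) and (ii), applying the interior Harnack inequality from \cite{Ietal} to the non-negative solution $(Z,t)\mapsto\omega(Z,t,Q)$ yields $\omega(A^+_{cQ_0,\Lambda},Q)\geq c\,\omega(A^+_{Q,\Lambda},Q)\geq c\,c_0$, while the trivial bound $\omega(A^+_{cQ_0,\Lambda},2Q)\leq 1$ follows from the maximum principle. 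Dividing gives $\omega(2Q)\leq (c\,c_0)^{-1}\omega(Q)$, which is the claim.

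\emph{Harnack chain.} To build (ii) I would iterate the scale-invariant interior Harnack inequality of \cite{Ietal} along a sequence of points $(Z_k,t_k)$ in $\Omega$, each obtained from its predecessor by a group translation of size comparable to the scale of the relevant subcube. The forward-in-time orientation of both $A^+_{cQ_0,\Lambda}$ and $A^+_{Q,\Lambda}$ is essential since the Kolmogorov Harnack inequality is strictly forward-in-time. The $y_m$-independence of $\psi$ plays a supporting role: because $\Omega$ is invariant under $y_m$-translations, the chain may be drawn freely in the $y_m$ direction, which is needed to accommodate the $y_m$-shift $\mp\frac{2}{3}\Lambda l(Q)^3$ built into $A^\pm_{Q,\Lambda}$.

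\emph{Non-degeneracy.} For (i) I would construct a non-negative weak sub-solution $v$ of $\L$ in $\Omega$ that vanishes on $\partial\Omega\setminus Q$ and satisfies $v(A^+_{Q,\Lambda})\geq c_0$; comparison then gives $\omega(A^+_{Q,\Lambda},Q)\geq v(A^+_{Q,\Lambda})\geq c_0$. By the hypothesis that $A$ and $\psi$ are independent of $y_m$, one may seek $v$ that is itself $y_m$-independent: the operator $\L$ then reduces to a lower-dimensional hypoelliptic operator of the same Kolmogorov form, for which an explicit barrier can be fabricated by combining a rescaled and translated fundamental solution of the prototype operator $\K$ with a smooth cutoff in $x_m$ adapted to the graph $\{x_m=\psi(x,y,t)\}$. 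Scale invariance of the construction under $\delta_r$ and left translation by $(Z_Q,t_Q)$ then yields the uniform constant $c_0=c_0(m,\kappa,M_1)$.

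\emph{Main obstacle.} The delicate part is the barrier construction in the non-degeneracy step. In the uniformly parabolic case one has explicit Gaussian barriers, but here the drift $X\cdot\nabla_Y$ couples variables of different homogeneous weights ($1$ and $3$) and there is no ellipticity in the $Y$-direction, so standard parabolic estimates do not apply directly. The working hypothesis that $\psi$ and $A$ are independent of $y_m$ is precisely what enables the dimension-reducing ansatz above and avoids the need to construct a genuinely hypoelliptic barrier acting in the most degenerate direction; this is the essential way in which the $y_m$-independence assumption, highlighted in the introduction, enters the proof.
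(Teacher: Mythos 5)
The paper itself does not prove Theorem \ref{dub}: it is presented as ``a version of one of the main results (namely, Theorem 3.6) proved in \cite{LN}'', and the entire proof lives in \cite{LN} (as do the other supporting estimates imported in Section 6, Lemmas \ref{lem4.7bol}--\ref{lemmacruc-}). There is thus no in-paper argument to compare against; what follows is an assessment of your proposal on its own terms.

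Your scheme has a structural defect at step (ii) that no amount of refinement of the barrier in (i) will repair. The chain joining $A^+_{Q,\Lambda}$ to $A^+_{cQ_0,\Lambda}$ cannot have bounded length: it must ascend from distance $\sim \Lambda l(Q)$ to distance $\sim \Lambda l(Q_0)$ from $\partial\Omega$, which takes on the order of $\log\bigl(l(Q_0)/l(Q)\bigr)$ Harnack steps, and this is unbounded as $Q$ ranges over all cubes with $4Q\subset Q_0$. Consequently the constant your chain actually produces is $c(Q)\sim (l(Q)/l(Q_0))^{\alpha}$ for some $\alpha>0$ --- which is indeed the correct order of decay of $\omega(A^+_{cQ_0,\Lambda},Q)$ --- but pairing this $Q$-dependent lower bound with the crude upper bound $\omega(A^+_{cQ_0,\Lambda},2Q)\leq 1$ yields only $\omega(2Q)/\omega(Q)\lesssim (l(Q_0)/l(Q))^{\alpha}$, which blows up as $Q$ shrinks. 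A scale-invariant doubling constant requires that the numerator $\omega(2Q)$ be estimated from above with the \emph{same} $(l(Q)/l(Q_0))^{\alpha}$ accuracy as the denominator is estimated from below, i.e. the comparison must be carried out at the local scale $l(Q)$ and only then transferred to the fixed pole. That is exactly the architecture in \cite{LN}: local non-degeneracy at the forward corkscrew (essentially Lemma \ref{bourg}), a local Carleson-type upper bound, and a change-of-pole / kernel-function estimate (Lemmas \ref{lem4.5-Kyoto1ha} and \ref{lemmacruc-}, both themselves from \cite{LN}) to move the ratio from $A^+_{cQ,\Lambda}$ to $A^+_{cQ_0,\Lambda}$. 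The boundary Harnack and comparison machinery behind those lemmas is the genuine content of the theorem and the place where the $y_m$-independence of $\psi$ and $A$ is used --- not in translating long Harnack chains in the $y_m$-direction. Your non-degeneracy step (i) is plausible and in the spirit of Lemmas \ref{lem4.5-Kyoto1} and \ref{bourg}, but it is the easy half of the problem.
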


Given an  unbounded ($y_m$-independent)  Lipschitz domain $\Omega=\Omega_\psi\subset\mathbb R^{N+1}$ we let $\delta=\delta(X,Y,t)$ denote the distance from $(X,Y,t)\in\Omega$ to $\partial\Omega$, i.e.
\begin{equation}\label{deltadist}
    \delta(X,Y,t)=\min\{d((X,Y,t),(\tilde X,\tilde Y,\tilde t)) \mid (\tilde X,\tilde Y,\tilde t)\in\partial\Omega\}.
\end{equation}
Consider the following measures $\mu_1$ and $\mu_2$ defined on $\Omega$:
\begin{equation}\label{measure1}
\begin{split}
\d\mu_1(X,Y,t)&:=|\nabla_XA(X,Y,t)|^2\delta(X,Y,t)\ \d X\d Y\d t,\\
\d\mu_2(X,Y,t)&:=|(X\cdot\nabla_Y-\partial_t)A(X,Y,t)|^2\delta^3(X,Y,t)\ \d X\d Y\d t.
\end{split}
\end{equation}
We say that $\mu_1$ and $\mu_2$ are Carleson measures on $\Omega$ with constant $\Gamma$ if
\begin{equation}\label{measure2}
\begin{split}
\sup_{Q\in\mathbb{D}}\quad l(Q)^{-{({\bf q}-1)}}\iiint_{T_Q}\d\mu_1(\tilde X,\tilde Y,\tilde t)&\leq\Gamma,\\
\sup_{Q\in\mathbb{D}}\quad l(Q)^{-{({\bf q}-1)}}\iiint_{T_Q}\d\mu_2(\tilde X,\tilde Y,\tilde t)&\leq\Gamma.
\end{split}
\end{equation}

The following is the main result proved in this paper.
\begin{theorem}\label{Ainfty} Assume that $\Omega\subset\mathbb R^{N+1}$ is an (unbounded) admissible ($y_m$-independent) Lipschitz domain  with constants $(M_1,M_2)$ in the sense of Definition \ref{car}. Assume that $A$ satisfies \eqref{eq2} with constant $\kappa$, \eqref{eq2+} and \eqref{struct}, i.e. also $A$ is independent of $y_m$. Assume that the measures $\mu_1$ and $\mu_2$ defined in \eqref{measure1} are Carleson measures on $\Omega$ with constant $\Gamma$ in the sense of \eqref{measure2}. Then there exist
 $\Lambda=\Lambda(m,M_1)$, $1\leq \Lambda<\infty$,   $c=c(m,\kappa,M_1)$,  $1\leq c<\infty$, $\tilde c=\tilde c(m,\kappa, M_1,M_2,\Gamma)$,  $1\leq \tilde c<\infty$,
   $\eta=\eta(m,\kappa,M_1,M_2,\Gamma)$, $0<\eta<1$, such that the following is true. Consider $Q_0\in\mathbb{D}$ and let $\omega(\cdot):=\omega\bigl (A_{cQ_0,\Lambda}^+,\cdot\bigr )$. Then
 \begin{eqnarray*}
 \quad\tilde c^{-1}\biggl (\frac{ \sigma ( E ) }{ \sigma(Q)}\biggr )^{1/\eta}\leq \frac {\omega\bigl (E\bigr )}{\omega\bigl ( Q\bigr )}\leq \tilde c\biggl (\frac{ \sigma ( E ) }{ \sigma(Q)}\biggr )^\eta
\end{eqnarray*}
whenever $E\subset Q$ for some $Q\in\mathbb{D}$ such that $Q\subseteq Q_0$.
\end{theorem}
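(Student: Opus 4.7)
The plan is to upgrade the doubling property of $\omega$ supplied by Theorem~\ref{dub} to the full $A_\infty$ estimate via a Carleson measure estimate for bounded solutions of $\L u = 0$. First I would reduce the two-sided conclusion to the following local John-Nirenberg-type statement: there exist $\eta_0,\epsilon_0\in(0,1)$, depending only on $m,\kappa,M_1,M_2,\Gamma$, such that for every $Q\subseteq Q_0$ and every Borel $E\subset Q$,
\begin{equation*}
\sigma(E)\ge (1-\eta_0)\,\sigma(Q) \ \Longrightarrow\ \omega(E)\ge \epsilon_0\,\omega(Q).
\end{equation*}
Because $\omega$ is doubling on $\partial\Omega$ (Theorem~\ref{dub}) and $(\Sigma,d,\sigma)$ is a space of homogeneous type, the Coifman--Fefferman self-improvement machinery converts this into the quantitative bounds in the conclusion of Theorem~\ref{Ainfty}.

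Second, I would split the proof of the implication above into a localization step and a Carleson-measure step, in parallel to the three Lemmas \ref{existcover}--\ref{lemmacruc+} announced in the introduction. Lemma~\ref{existcover} should produce a dyadic stopping-time family $\mathcal{F}\subset\mathbb{D}_{Q_0}$ organizing the subcubes where $\omega$ is comparable to $\sigma$ with comparability controlled by a running density, and Lemma~\ref{lemmacruc} should provide a change-of-pole / boundary Harnack estimate for $\omega(A_{cQ_0,\Lambda}^+,\cdot)$ versus $\omega(A_{Q,\Lambda}^+,\cdot)$, using the non-negative solution estimates for $\L$ established in \cite{LN}. The core estimate, Lemma~\ref{lemmacruc+}, would then use these tools together with the Carleson measure estimate of Step 3 to run a good-$\lambda$ argument relating $\omega(E)/\omega(Q)$ to $\sigma(E)/\sigma(Q)$.

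Third, and this is the crux, I would aim to prove the following Carleson measure estimate for solutions (Lemma~\ref{Carleson}): if $u$ is a weak solution of $\L u=0$ in $T_{Q_0}$ with $\sup|u|\le 1$, then for every $Q\subseteq Q_0$,
\begin{equation*}
\ell(Q)^{-(\mathbf{q}-1)}\iiint_{T_Q}|\nabla_X u(X,Y,t)|^2\,\delta(X,Y,t)\,\d X\d Y\d t \le C,
\end{equation*}
with $C$ depending only on $m,\kappa,M_1,M_2,\Gamma$. Combined with a Green-type identity $\int|u|^2\,d\omega \sim \iiint|\nabla_X u|^2\,\delta\,\d X\d Y\d t + \text{error}$, applied to $u(X,Y,t):=\omega(X,Y,t,E)$, this yields the local estimate of Step 1 after absorbing the error terms using the Carleson hypotheses on $\mu_1,\mu_2$ and the admissibility hypothesis \eqref{Lip-++a2} on $\psi$.

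The main obstacle will be the proof of Lemma~\ref{Carleson}. The natural approach is to pull $\Omega$ back to the upper half-space via a regularized distance adapted to the dilations \eqref{dil.alpha.i} and the group law \eqref{e70}, à la Dahlberg--Ne\v{c}as--Hofmann--Lewis--Nystr\"om, so that the regularity of $\psi$ encoded in \eqref{Lip-++a2} shows up as Carleson control on the coefficients of the pulled-back operator. One then seeks a Rellich-type identity for $\L$ by testing $\L u=0$ against $X\cdot\nabla_X u\,\cdot\varphi$ for a suitable cutoff $\varphi$ whose transverse vector field points into $\Omega$. The serious difficulty is the drift $X\cdot\nabla_Y$: commuting it past the Rellich multiplier generates cross terms involving $\nabla_Y u$, which are not a priori controlled by $|\nabla_X u|^2\delta$ in view of the degeneracy of $\L$. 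This is exactly where the structural assumption \eqref{struct} that both $\psi$ and $A$ are independent of $y_m$ is needed: it ensures that the dangerous $\partial_{y_m}u$ component of the drift either vanishes or can be integrated by parts against the boundary without a surface term, making the error terms amenable to absorption using the Carleson bounds on $\mu_1,\mu_2,M_2$. The remaining estimates (localization, cutoff errors, passage from classical solutions to the measurable-coefficient setting) should then follow from the local H\"older continuity and Harnack inequalities of \cite{Ietal} used as black boxes.
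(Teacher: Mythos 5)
Your high-level strategy is aligned with the paper: reduce the $A_\infty$ statement to a Carleson measure estimate for the square function of a bounded solution, organize the reduction via the good-$\epsilon_0$ cover machinery of Kenig--Kirchheim--Pipher--Toro, then pull $\Omega$ back to $\mathbb R^{N+1}_+$ via a regularized distance and exploit the $y_m$-independence of $\psi$ and $A$ to integrate the dangerous $\partial_{y_m}$ terms by parts without surface contributions. That last observation is indeed the crucial structural point, and you have it right: it is used in the paper exactly where the estimate $L_m\lesssim M^{1/2}J^{1/2}+I+J$ is derived, and at the term $I_{1132}$ where $\partial_{y_m}(v^2)$ is integrated off the coefficient.

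There are, however, two genuine gaps in how you propose to carry this out. First, the lower bound relating $\sigma(E)$ to the square function cannot come from the ``Green-type identity $\int |u|^2\,d\omega\sim\iint|\nabla_X u|^2\,\delta+\text{error}$'' applied to $u=\omega(\cdot,E)$: chasing your inequalities this controls $\omega(E)$ by $\sigma(Q_0)$ plus errors, which is the wrong direction and does not bound $\sigma(E)$ from above by a small multiple of $\sigma(Q_0)$ when $\omega(E)$ is small. What the paper actually proves (Lemma~\ref{lemmacruc}) is a \emph{geometric} lower bound: from the good-$\epsilon_0$ cover $\{\mathcal{O}_l\}$ one builds the Borel set $S$ and $F=\sum_j\chi_{\tilde{\mathcal O}_{j-1}\setminus\mathcal O_j}$, and one shows that $u(\cdot,S)$ drops by a fixed $\beta>0$ across a Whitney-thickness gap above every cube $\bar\Delta_i^l$ appearing in the cover of $E$; summing these oscillations over the $k$ generations yields $\Upsilon^2\sigma(E)\le c\iint_{T_{cQ_0}}|\nabla_X u|^2\delta$. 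No Green identity is involved, and replacing it by one would lose the factor $\Upsilon^2$ (equivalently, the ability to take $\delta_0$ small independently of the comparison). You also misassign the lemmas: the change-of-pole estimate you attribute to Lemma~\ref{lemmacruc} is in fact Lemma~\ref{lemmacruc-} (imported from \cite{LN}), while Lemma~\ref{lemmacruc} is precisely this oscillation lower bound.

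Second, the Carleson estimate (Lemma~\ref{Carleson}) is not obtained by testing $\L u=0$ against a Rellich multiplier of the form $X\cdot\nabla_X u\cdot\varphi$. The paper instead integrates $|\nabla_W v|^2\Psi^2 w_m$ by parts directly, repeatedly substitutes the pulled-back equation $\nabla_W\cdot(\tilde A\nabla_W v)+\tilde B\cdot\nabla_W v+D\cdot\nabla_{Y,t}v=0$, and is forced to track an interlocking system of five auxiliary quantities $I,J,K,L,M$ carrying successively higher powers of $w_m$ and of the cutoff $\Psi$ (these control $|\nabla_W v|^2$, $|D\cdot\nabla_{Y,t}v|^2$, $|\nabla_W\partial_{w_i}v|^2$, $|\nabla_Y v|^2$, and $|\nabla_W\partial_{y_i}v|^2$ respectively). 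The absorption scheme that closes the argument,
\begin{equation*}
J+K\lesssim I+\epsilon_1 L,\quad L\lesssim I+J+\epsilon_2 M,\quad M\lesssim L+I+\epsilon_3 K,
\end{equation*}
is specific to this bootstrap and is not produced by a single Rellich identity; the degenerate drift $X\cdot\nabla_Y$ forces the introduction of the higher-derivative quantities $K$, $L$, $M$ precisely because, as you note, commuting the drift past the transverse variable generates $\nabla_Y u$, which has no a priori bound in terms of $|\nabla_X u|^2\delta$. Your proposal correctly diagnoses this difficulty and the role of $y_m$-independence in resolving it, but does not supply the iteration that actually closes the estimate.
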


As mentioned before,  in the prototype case $A\equiv 1_m$, i.e. in the case of the operator $\K$, Theorem \ref{Ainfty} is proved in \cite{N} and this seems to be the only previous result of its kind for operators of Kolmogorov type.

    \section{Proof of Theorem \ref{Ainfty}: preliminary reductions}\label{sec4}

     Using Lemma \ref{lemmacruc-} and Lemma \ref{T:doubling} below  it follows that it suffices to prove Theorem \ref{Ainfty} with $Q=Q_0$. In the following we let $Q_0\in \mathbb{D}$ and we let $\omega(\cdot)$ be as in the statement of Theorem \ref{Ainfty}.  Our proof of  Theorem \ref{Ainfty} is based on ideas introduced in \cite{KKPT} in the context of elliptic measures and we will use  the notion of  good $\epsilon_0$ covers.

\begin{definition}\label{deff1}  Let $E\subset {Q_0}$ be given, let $\epsilon_0\in (0,1)$ and let $k$ be an integer. A good $\epsilon_0$ cover of $E$, of length $k$, is a collection
$\{\mathcal{O}_l\}_{l=1}^k$ of nested (relatively) open subsets of ${Q_0}$, together with collections
$\F_l=\{\Delta_i^l\}_i\subset Q_0$, $\Delta_i^l\in \mathbb D$,  such that
\begin{eqnarray}\label{cover1}
E\subset \mathcal{O}_k\subset\mathcal{O}_{k-1}\subset....\subset\mathcal{O}_1\subset Q_0,
\end{eqnarray}
\begin{eqnarray}\label{cover2}
\mathcal{O}_l=\bigcup_{\F_l}\Delta_i^l,
\end{eqnarray}
and
\begin{eqnarray}\label{cover3}\omega(\mathcal{O}_l\cap \Delta_i^{l-1})\leq \epsilon_0\omega(\Delta_i^{l-1}),\mbox{ for all }\Delta_i^{l-1}\in\F_{l-1}.
\end{eqnarray}
\end{definition}

Using the notion of  good $\epsilon_0$ covers we can reduce the proof of  Theorem \ref{Ainfty} to the proof of the following three lemmas.

\begin{lemma}\label{existcover} Let $E\subset {Q_0}$ be given, consider $\epsilon_0\in (0,1)$ and let $k$ be a positive integer. There exist  $\gamma=\gamma(m,\kappa,M_1)$, $0<\gamma\ll 1$, and $\Upsilon=\Upsilon(m,\kappa,M_1)$, $1\ll\Upsilon$, such that if we let $\delta_0=\gamma(\epsilon_0/\Upsilon)^k$, and if
$\omega(E)\leq\delta_0$,  then
$E$  has a good $\epsilon_0$ cover of length $k$.
\end{lemma}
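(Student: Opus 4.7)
I will run the standard Calderón--Zygmund stopping-time construction in the style of \cite{KKPT}, using only the doubling property of $\omega$ on subcubes of $Q_0$ furnished by Theorem \ref{dub} as input. Denote by $C_0=C_0(m,\kappa,M_1)\geq 1$ the doubling constant of Theorem \ref{dub} (so $\omega(2Q)\leq C_0\omega(Q)$ whenever $4Q\subset Q_0$); iterating a fixed number of times, this also yields a parent--child bound $\omega(\widehat\Delta)\leq C_1\omega(\Delta)$, $C_1=C_1(m,\kappa,M_1)$, for every dyadic $\Delta\in\mathbb{D}_{Q_0}$ whose dyadic parent $\widehat\Delta$ is sufficiently deep inside $Q_0$. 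By outer regularity of $\omega$ I may replace $E$ by a relatively open $E^*\supset E$ with $\omega(E^*)\leq 2\omega(E)$, so I henceforth assume $E$ is open in $Q_0$; the factor $2$ is absorbed into the eventual choice of $\gamma$.

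\textbf{Inductive construction.} Start with $\mathcal{F}_0:=\{Q_0\}$ and $R_0:=\omega(E)/\omega(Q_0)$. Given a disjoint family $\mathcal{F}_{l-1}\subset\mathbb{D}_{Q_0}$ satisfying $\omega(E\cap\Delta)/\omega(\Delta)\leq R_{l-1}$ for every $\Delta\in\mathcal{F}_{l-1}$, set the stopping-time threshold $\lambda_l:=R_{l-1}/\epsilon_0$ and, for each $\Delta\in\mathcal{F}_{l-1}$, let $\mathcal{F}_l(\Delta)$ be the collection of maximal dyadic subcubes $\Delta'\subsetneq\Delta$ with $\omega(E\cap\Delta')/\omega(\Delta')>\lambda_l$; then define $\mathcal{F}_l:=\bigcup_{\Delta\in\mathcal{F}_{l-1}}\mathcal{F}_l(\Delta)$ and $\mathcal{O}_l:=\bigcup_{\Delta'\in\mathcal{F}_l}\Delta'$. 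Maximality applied at the dyadic parent, together with the parent--child doubling bound, yields
\[
\omega(E\cap\Delta')/\omega(\Delta')\leq C_1\lambda_l=:R_l
\qquad\text{for every }\Delta'\in\mathcal{F}_l,
\]
so the inductive hypothesis propagates; disjointness of the $\Delta'$ inside each $\Delta$ and the defining inequality $\omega(\Delta')<\lambda_l^{-1}\omega(E\cap\Delta')$ give
\[
\omega(\mathcal{O}_l\cap\Delta)\leq \lambda_l^{-1}\!\!\sum_{\Delta'\in\mathcal{F}_l(\Delta)}\omega(E\cap\Delta')\leq \lambda_l^{-1}\omega(E\cap\Delta)\leq \epsilon_0\omega(\Delta),
\]
which is \eqref{cover3}. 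Openness of $E$ forces every $z\in E$ into some maximal selected cube (any sufficiently small dyadic cube containing $z$ lies entirely in $E$, so has ratio $1>\lambda_l$ and therefore sits inside a maximal cube of $\mathcal{F}_l$); this gives $E\subset\mathcal{O}_l$, i.e.\ \eqref{cover1}. Property \eqref{cover2} and the nestedness $\mathcal{O}_l\subset\mathcal{O}_{l-1}$ hold by construction.

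\textbf{Constants and main difficulty.} Iterating the recursion $R_l=(C_1/\epsilon_0)R_{l-1}$ gives $R_{k-1}=(C_1/\epsilon_0)^{k-1}R_0$, so the construction is valid provided every $\lambda_l\in(0,1)$, which reduces to $R_0<\epsilon_0^k/C_1^{k-1}$. Taking $\Upsilon:=2C_1$ and $\gamma$ any sufficiently small absolute constant, the hypothesis $\omega(E)\leq \gamma(\epsilon_0/\Upsilon)^k$ comfortably implies this inequality, and both $\gamma$ and $\Upsilon$ depend only on $m,\kappa,M_1$ as required. The only genuine technicality is that Theorem \ref{dub} guarantees doubling only for dyadic cubes $Q$ with $4Q\subset Q_0$; this is accommodated either by restricting the stopping-time cubes to generations at least $j_0=j_0(m,\kappa,M_1)$ below $k(Q_0)$ (so that the parent of every selected cube still satisfies the hypothesis of Theorem \ref{dub}), or equivalently by working inside a slightly shrunken subcube of $Q_0$. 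This incurs only a fixed multiplicative loss that is absorbed into $\Upsilon$. The substantive depth of the lemma lies entirely in its reliance on the doubling result of \cite{LN} recorded as Theorem \ref{dub}; once that is granted, the proof is the classical KKPT stopping-time iteration.
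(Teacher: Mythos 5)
Your argument is correct, but it takes a genuinely different (and perfectly legitimate) route from the paper's. The paper works \emph{backwards}: it first produces a relatively open set $\mathcal{O}_{k+1}\supset E$ with $\omega(\mathcal{O}_{k+1})\leq 2\omega(E)$ by outer regularity, then repeatedly applies the $\omega$-Hardy--Littlewood maximal operator, defining each $\mathcal{O}_{l}$ as the superlevel set $\{M_\omega(\chi_{\mathcal{O}_{l+1}})\geq\epsilon_0/\bar c\}$ and decomposing it into maximal dyadic cubes; the weak $L^1$ estimate for $M_\omega$ supplies $\omega(\mathcal{O}_{l})\lesssim \epsilon_0^{-1}\omega(\mathcal{O}_{l+1})$ and hence the factor $\Upsilon$. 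You instead work \emph{forwards}, running a single iterated Calder\'on--Zygmund stopping-time directly on the density of $E$ with a geometrically increasing threshold $\lambda_l = R_{l-1}/\epsilon_0$, and verify \eqref{cover3} from the stopping inequality and \eqref{cover1} from openness. Both are standard KKPT-style implementations and both hinge on the same two inputs: doubling of $\omega$ on dyadic subcubes (Theorem \ref{dub}), and outer regularity of $\omega$. Your version avoids explicit use of the weak-$(1,1)$ maximal function bound, at the price of tracking a growing threshold.

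One point to tighten: you describe $\gamma$ as ``any sufficiently small absolute constant,'' but the inequality you ultimately need, $\gamma < 2^k C_1\,\omega(Q_0)$, involves $\omega(Q_0)$ and is therefore \emph{not} absolute. It becomes a statement about $(m,\kappa,M_1)$ only because of the lower bound $\omega(Q_0)\geq c^{-1}$, which is exactly the content of Lemma \ref{bourg} (in combination with the Harnack inequality, Lemma \ref{lem4.7bol}). The paper invokes this explicitly in the chain $2\delta_0\leq c\delta_0\,\omega(Q_0)$ at the outset; you should cite it too, both for the constraint on $\gamma$ and to justify $\lambda_1<1$ in the very first stopping step. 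With that reference added, your proof is complete and correct.
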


\begin{lemma}\label{lemmacruc}  Let $\Upsilon\gg 1$ be given and consider $\delta_0\in (0,1)$. Assume that $E\subset {Q}_0$ with $\omega(E)\leq\delta_0$. If
$\delta_0=\delta_0(m,\kappa,M_1,\Upsilon)$ is chosen sufficiently small, then there exists a Borel set $S\subset\partial\Omega$, and a constant $c=c(m,\kappa,M_1)$, $1\leq c<\infty$,  such that if we let $u(Z,t):=\omega(Z,t,S)$, then
$$\Upsilon^2\sigma(E)\leq c \iiint_{T_{cQ_0}}|\nabla_Xu|^2\delta\, \d Z\d t.$$
Here
$\delta=\delta(Z,t)$ is as in \eqref{deltadist}, i.e. the distance from $(Z,t)\in \Omega$ to $\Sigma$, and $T_{cQ_0}$ is the Carleson set associated to $cQ_0$ as defined in \eqref{eq2.box}.
\end{lemma}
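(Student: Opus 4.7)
The strategy combines the good $\epsilon_0$-cover framework of \cite{KKPT} with the potential-theoretic machinery for $\L$ developed in \cite{LN}. Given $\Upsilon\gg 1$, fix a small $\epsilon_0=\epsilon_0(m,\kappa,M_1)\in(0,\tfrac12)$ and an integer $k=k(\Upsilon,\epsilon_0)$ to be chosen below, and take $\delta_0:=\gamma(\epsilon_0/\Upsilon)^k$ as prescribed in Lemma \ref{existcover}. Under the smallness assumption $\omega(E)\le\delta_0$, that lemma produces a good $\epsilon_0$-cover $\{\mathcal{O}_l\}_{l=1}^{k}$ of $E$ with associated families $\F_l=\{\Delta_i^l\}_i\subset Q_0$. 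I then set
$$S:=\bigcup_{l\text{ odd}}(\mathcal{O}_l\setminus\mathcal{O}_{l+1}),\qquad u(Z,t):=\omega(Z,t,S),$$
so that $u$ is a bounded weak solution of $\L u=0$ in $\Omega$ with $0\le u\le 1$.

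For each $\Delta_i^l\in\F_l$, let $P_i^l:=A_{\Delta_i^l,\Lambda}^+$ be the corkscrew point of \eqref{pointsref2apa}. The key claim is that whenever $\Delta_i^l\subset\Delta_j^{l-1}$ (a parent--child pair in the cover),
$$|u(P_i^l)-u(P_j^{l-1})|\ge c_0,$$
for some $c_0=c_0(m,\kappa,M_1)>0$ independent of $l$. Indeed, the corkscrew condition, the doubling property of $\omega$ (Theorem \ref{dub}), and the change-of-pole/comparison principle from \cite{LN} together yield $\omega(P_i^l,\Delta_i^l)\approx 1$ and $\omega(P_i^l,F)\approx\omega(F)/\omega(\Delta_i^l)$ for $F\subset\Delta_i^l$. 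Applied to $F=\mathcal{O}_{l+1}\cap\Delta_i^l$, property \eqref{cover3} of the cover gives $\omega(P_i^l,\mathcal{O}_{l+1}\cap\Delta_i^l)\le C\epsilon_0$. Hence most of the $\omega(P_i^l,\cdot)$--mass is supported on $\Delta_i^l\setminus\mathcal{O}_{l+1}$, which is contained in $S$ for $l$ odd and in its complement for $l$ even. Choosing $\epsilon_0$ small therefore forces $u(P_i^l)$ to lie near $1$ or $0$ according to the parity of $l$; comparing with $u(P_j^{l-1})$, whose parity is opposite, produces the oscillation.

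Next I convert the pointwise oscillations into the square function lower bound. Using a Caccioppoli inequality for $\L$ (which controls $\nabla_X u$ in $L^2$ and is thus aligned with the form of the right-hand side), together with a chaining argument along a Harnack chain of cc-balls connecting $P_j^{l-1}$ to $P_i^l$ constructed to respect the group law \eqref{e70} and the dilation \eqref{dil.alpha.i}, one obtains
$$c_0^2\,\sigma(\Delta_i^l)\;\lesssim\;\iiint_{U_{\Delta_i^l}^\ast}|\nabla_X u|^2\,\delta\,\d Z\,\d t,$$
where $U_{\Delta_i^l}^\ast$ is a controlled enlargement of the Whitney region of $\Delta_i^l$. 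Summing over $i$ with bounded overlap and using $E\subset\mathcal{O}_l$ gives $c_0^2\,\sigma(E)\lesssim\iiint_{T_{cQ_0}}|\nabla_X u|^2\delta\,\d Z\,\d t$ for each $l$; a further sum in $l=1,\dots,k$ yields $k c_0^2\,\sigma(E)\lesssim\iiint_{T_{cQ_0}}|\nabla_X u|^2\delta\,\d Z\,\d t$. Choosing $k\ge C\Upsilon^2/c_0^2$ finishes the proof, and the corresponding $\delta_0=\gamma(\epsilon_0/\Upsilon)^k$ depends only on $(m,\kappa,M_1,\Upsilon)$.

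The principal obstacle is the chaining step. In the uniformly parabolic setting the analogous estimate follows from a standard Caccioppoli inequality along a short chain of parabolic balls, but here $\L$-Caccioppoli controls only the horizontal gradient $\nabla_X u$. The path joining $P_j^{l-1}$ and $P_i^l$ must therefore interleave short $X$-direction segments (controlled by Caccioppoli) with flows of the transport field $X\cdot\nabla_Y-\partial_t$ (along which solutions of $\L u=0$ propagate), all carried out inside $\Omega$ at the correct distance from $\partial\Omega$. Verifying that the constants in this chain remain uniform in $l$ --- so that the telescoping in $l$ indeed produces the factor $k$ needed to dominate $\Upsilon^2$ --- is the technical heart of the argument, and it is here that the full strength of the boundary estimates for non-negative $\L$-solutions established in \cite{LN} will be invoked.
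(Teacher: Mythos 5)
Your proposal correctly identifies the good $\epsilon_0$-cover framework and the overall shape of the argument (oscillation of $u$ at each generation $l$, converted into a square-function lower bound, then summed over $l$ to produce the factor $k\gtrsim\Upsilon^2$). However, there is a genuine gap at what you yourself flag as the ``principal obstacle,'' and the paper resolves it by a construction that is different from yours at two crucial points.

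First, the paper does \emph{not} take the parity-based set $S=\bigcup_{l\text{ odd}}(\mathcal{O}_l\setminus\mathcal{O}_{l+1})$. Instead it sets $F:=\sum_{j=2}^{k}\chi_{\tilde{\mathcal{O}}_{j-1}\setminus\mathcal{O}_j}$ and lets $S$ be the Borel set with $F=\chi_S$, where the $\tilde{\mathcal{O}}_{j-1}=\bigcup_i\tilde\Delta_i^{j-1}$ are the \emph{shrunk} unions built from the auxiliary cubes $\tilde Q$ of Remark~\ref{gc}. This shrinking is not cosmetic: in the estimate of $u(P^-)$ it is precisely what forces the term $II_3=\omega(P^-,\hat\Delta_i^l\cap(\tilde{\mathcal O}_l\setminus\mathcal O_{l+1}))$ to vanish, since $\hat\Delta_i^l\cap\tilde{\mathcal O}_l=\emptyset$ by construction. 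Your alternating-parity set $S$ would not give that vanishing, and you would have to treat an extra mass term sitting inside $\hat\Delta_i^l$.

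Second, and more seriously, the paper does \emph{not} compare $u$ at corkscrew points $A^+_{\Delta_i^l,\Lambda}$ and $A^+_{\Delta_j^{l-1},\Lambda}$ of a parent--child pair and chain between them. It instead compares $u(P^+)$ and $u(P^-)$ where $P^\pm=P+d_{\Delta_i^l}e_m$ and $P+l(\bar\Delta_i^l)e_m$ are two points above the \emph{same} boundary point $P\in\bar\Delta_i^l$, at different heights along the $x_m$-direction. The lower bound $u(P^+)\ge c_{\eta_1}^{-1}-C_{\eta_1}\epsilon_0$ comes from the kernel bounds of Lemma~\ref{lem4.5-Kyoto1ha} plus doubling, and the upper bound $u(P^-)\le c\eta_2^\sigma+C_{\eta_2}\epsilon_0$ from the boundary H\"older estimate Lemma~\ref{lem4.5-Kyoto1} plus the kernel bounds; choosing $\eta_1\gg\eta_2$ and then $\epsilon_0$ small yields $u(P^+)-u(P^-)\ge\beta$. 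The point of this choice is exactly to sidestep the chaining issue you raise: the segment from $P^-$ to $P^+$ is a straight line in the $x_m$-direction, so the fundamental theorem of calculus produces
\begin{equation*}
\beta^2\le c\,l(\Delta_i^l)\int_{P^-}^{P^+}|\partial_{x_m}u|^2\,\d x_m,
\end{equation*}
and only $\partial_{x_m}u$ --- a component of $\nabla_Xu$ --- appears. Your Harnack chain connecting corkscrew points of two different cubes would inevitably involve displacements in the $Y$ and $t$ directions; the fundamental theorem of calculus along such a chain would require control of $\nabla_Yu$ and $\partial_tu$, which the square function $\iiint|\nabla_Xu|^2\delta\,\d Z\d t$ does not provide. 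Your suggested remedy --- interleaving $X$-segments with flows of the transport field $X\cdot\nabla_Y-\partial_t$ --- does not close the gap: the equation gives $(X\cdot\nabla_Y-\partial_t)u=-\nabla_X\cdot(A\nabla_Xu)$, so $u$ genuinely varies along transport characteristics and controlling that variation in $L^2$ would require second-order horizontal derivatives (the quantities $J,K$ of Section~\ref{sec5}), which are not available at this stage. In short, the oscillation-to-gradient step as you set it up would fail; the paper's geometric choice of comparison points, together with the $\tilde{\mathcal O}$-modification of $S$, is what makes the lemma hold with only $|\nabla_Xu|^2\delta$ on the right-hand side.
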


\begin{lemma}\label{lemmacruc+}  Let  $u(Z,t):=\omega(Z,t,S)$ and $c$ be as stated in Lemma \ref{lemmacruc}.
Then there exists $\tilde c=\tilde c(m,\kappa, M_1,M_2,\Gamma)$,  $1\leq \tilde c<\infty$,  such that
$$\iiint_{ T_{cQ_0}}|\nabla_Xu|^2\delta\, \d Z\d t\leq \tilde c\sigma(Q_0).$$
\end{lemma}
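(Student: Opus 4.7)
The strategy is an energy argument based on the pointwise identity
\[
\mathcal{L}(u^2) \;=\; 2\, A\nabla_X u \cdot \nabla_X u,
\]
which follows from the Leibniz rule and the fact that $u$ solves $\mathcal{L}u = 0$. Combined with the ellipticity in \eqref{eq2}, this gives $|\nabla_X u|^2 \leq (\kappa/2)\,\mathcal{L}(u^2)$ pointwise in $\Omega$. Hence the lemma reduces to bounding
\[
I_{Q_0} \;:=\; \iiint_{\Omega} \mathcal{L}(u^2)\,\Phi \, dZ\,dt \;\lesssim\; \sigma(Q_0),
\]
for a suitable test function $\Phi$ with $\Phi \approx \delta$ on $T_{cQ_0}$.

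The plan is to choose $\Phi := \phi^2\,\widetilde{\delta}$, where $\phi$ is a smooth cutoff with $\phi \equiv 1$ on $T_{cQ_0}$ and $\phi \equiv 0$ outside $T_{2cQ_0}$, and $\widetilde{\delta} \approx \delta$ is a regularized distance adapted to the Kolmogorov dilation group \eqref{dil.alpha.i}, satisfying in particular $|\nabla_X \widetilde{\delta}| \lesssim 1$, $|\nabla_X^2 \widetilde{\delta}| \lesssim \widetilde{\delta}^{-1}$ and $|(X\cdot\nabla_Y - \partial_t)\widetilde{\delta}| \lesssim \widetilde{\delta}^{-1}$. Using that $X\cdot\nabla_Y$ and $\partial_t$ are divergence-free in the $Y$ and $t$ variables respectively, and that $\Phi = 0$ on $\Sigma$, integration by parts yields
\[
I_{Q_0} \;=\; \iiint_{\Omega} u^2\, \mathcal{L}^*\Phi\, dZ\,dt \;-\; \iint_{\Sigma} u^2\, (A\nabla_X\Phi)\cdot \nu_X\, d\sigma',
\]
where $\mathcal{L}^* = \nabla_X\cdot(A\nabla_X\cdot) - X\cdot\nabla_Y + \partial_t$. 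Since $0 \leq u \leq 1$ and $|A\nabla_X\Phi| \lesssim 1$ on $\Sigma$, the surface integral is immediately $\lesssim \sigma(2cQ_0) \lesssim \sigma(Q_0)$ by the doubling property of $\sigma$ (and by the doubling/non-tangential estimates of \cite{LN} for the terms arising from the sidewalls of $\partial T_{2cQ_0}$ sitting away from $\Sigma$).

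It then remains to bound $\iiint_{T_{2cQ_0}} |\mathcal{L}^*\Phi|\, dZ\,dt$ by $\sigma(Q_0)$. Expanding
\[
\mathcal{L}^*\Phi \;=\; A:\nabla_X^2\Phi \;+\; (\nabla_X\cdot A)\cdot\nabla_X\Phi \;-\; X\cdot\nabla_Y\Phi \;+\; \partial_t\Phi
\]
via the product rule, the contributions not involving any derivative of $A$ are handled by a Whitney-cube summation together with a careful choice of $\widetilde{\delta}$ that exploits the cancellation in $\mathcal{L}^*\widetilde{\delta}$ expected of a regularized Kolmogorov distance. The $\mu_1$-piece $(\nabla_X \cdot A)\cdot\nabla_X\Phi$ is handled by Cauchy--Schwarz against the Carleson measure $\mu_1$:
\[
\iiint |\nabla_X A|\,|\nabla_X \Phi| \;\leq\; \Big(\iiint |\nabla_X A|^2\delta\Big)^{1/2} \Big(\iiint \frac{|\nabla_X \Phi|^2}{\delta}\Big)^{1/2} \;\lesssim\; \Gamma^{1/2}\,\sigma(Q_0).
\]

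The principal obstacle is the piece matching the Carleson measure $\mu_2$, i.e.\ the combination $(X\cdot\nabla_Y - \partial_t)A$, which does not appear in $\mathcal{L}^*\Phi$ directly but is forced out by a further integration by parts that transfers the transport operator $X\cdot\nabla_Y - \partial_t$ from the test function back onto $A$. Organizing this rearrangement so that the resulting error is integrable against the natural $\delta^3$ weight of $\mu_2$ (dictated by the non-Euclidean dilations \eqref{dil.alpha.i} and the Lie-group law \eqref{e70}) is the heart of the argument and is precisely the content of the key auxiliary Lemma \ref{Carleson}. Once Lemma \ref{Carleson} furnishes the resulting bound $\lesssim \Gamma^{1/2}\sigma(Q_0)$ for this residual piece, combining all the contributions closes the proof.
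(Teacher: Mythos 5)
Your plan is in the right spirit, and the pointwise identity $\L(u^2)=2A\nabla_Xu\cdot\nabla_Xu$ is correct, but the proposal has a genuine gap: it omits the change of variables that the paper performs before it can do any integration by parts at all. The paper's proof starts by pulling $u$ back to the half-space $U$ via the map in \eqref{dom+ggaint}, and it is precisely the regularized boundary function $\P_{\gamma w_m}\psi$ built into this map that (via Lemmas \ref{carlemma-} and \ref{carlemma}) generates the Carleson measure estimates for the new drift $\tilde B$ and the transport derivatives of $\tilde A$. Your ``regularized Kolmogorov distance'' $\tilde\delta$ must in fact be essentially this object, and the derivative bounds you posit ($|\nabla_X\tilde\delta|\lesssim 1$, $|\nabla_X^2\tilde\delta|\lesssim\tilde\delta^{-1}$, $|(X\cdot\nabla_Y-\partial_t)\tilde\delta|\lesssim\tilde\delta^{-1}$) are not by themselves sufficient to close the argument: the integral $\iiint_{T_{cQ_0}}\tilde\delta^{-1}\d Z\d t$ diverges logarithmically over Whitney scales, so you need genuine Carleson-measure cancellation (the content of Lemma \ref{carlemma}) for the terms in $\L^*\Phi$ that do not carry a derivative of $A$, not just a crude pointwise bound together with ``a careful choice'' of $\tilde\delta$.

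The deeper problem is the handling of the $\mu_2$ contribution. You correctly identify it as the heart of the matter, but invoking Lemma \ref{Carleson} as a black box for a ``residual piece'' of $\iiint u^2\L^*\Phi$ is not a legitimate step: Lemma \ref{Carleson} is stated for the pulled-back solution $v$ and the quantity $I_\epsilon=\iiint|\nabla_Wv|^2\Psi_\epsilon^2w_m$, it contains an absorption term $\sigma I_\epsilon$ that must be re-absorbed, and in the paper it \emph{is} the proof of Lemma \ref{lemmacruc+} (after the reduction to \eqref{keyestalla}), not an auxiliary estimate one can cite for a subterm in a different decomposition. Since you have not performed the pullback and have not defined the quantities $I,J,K,L,M$ of \eqref{e-kolm-ndggha-lla+gg}, there is no bridge from your expression $\iiint u^2\L^*\Phi$ to the statement of Lemma \ref{Carleson}. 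Finally, the proposal never uses, or even mentions, the hypothesis that $\psi$ and $A$ are independent of $y_m$; as the paper stresses at the end of Section \ref{sec4}, this independence is exploited in a crucial way to obtain the estimate $L_m\lesssim M^{1/2}J^{1/2}+I+J$ (see \eqref{sv}), without which the system of inequalities in Section \ref{sec5} does not close. Any correct proof of Lemma \ref{lemmacruc+} must at some point invoke that structural assumption, and an argument that is silent about it is either incomplete or based on a misunderstanding of where the difficulty lies.
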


The proofs of Lemmas \ref{existcover}-\ref{lemmacruc+} are given in the forthcoming sections of the paper.  To prove Theorem \ref{Ainfty} using these auxiliary lemmas, we note that first using Lemma \ref{lemmacruc} and
Lemma \ref{lemmacruc+}  we can, for $\Upsilon\gg 1$ given, choose $\delta_0=\delta_0(m,M_1,\Upsilon)$, so that if
$E\subset {Q_0}$ with $\omega(E)\leq\delta_0$, then
\begin{eqnarray}
\Upsilon^2\sigma(E)\leq \hat c\sigma(Q_0),
\end{eqnarray}
for some $\hat c=\hat c(m,\kappa, M_1,M_2,\Gamma)$, $1\leq \hat c<\infty$. In particular,  we can conclude that
there exists, for every $\varepsilon>0$, a positive $\delta_0=\delta_0(m,\kappa,M_1,M_2,\Gamma,\varepsilon)$ such that
\begin{eqnarray}\omega(E)\leq\delta_0\leq c\delta_0\omega({Q_0})\implies\sigma(E)\leq \varepsilon\sigma({Q_0}),
\end{eqnarray}
where we have also applied Lemma \ref{bourg} stated below.  Theorem \ref{Ainfty} now follows from the doubling property of $\omega$, see Lemma \ref{T:doubling}, and the classical result in \cite{CF}.

 The rest of the paper is devoted to the proofs of  Lemmas \ref{existcover}-\ref{lemmacruc+} and we consider the proof of Lemma \ref{lemmacruc+} a rather  difficult part in the proof of Theorem \ref{Ainfty}. We here show how to reduce  Lemma \ref{lemmacruc+} to a core technical estimate. To prove Lemma \ref{lemmacruc+} we can without loss of generality assume that
$(Z_{Q_0},t_{Q_0})=(0,0)$ and we let $\rho_0:=l(Q_0)$. Throughout the rest of the paper we let $\P$ denote a parabolic approximation of the identity: $\P\in C_0^\infty(\B_1(0,0))$, $\B_1(0,0)\subset\mathbb R^{N-1}$,  $\P\geq 0$ is real-valued, and $\iint \P\, \d z \d t=1$. We will assume, as we may
by imposing a product structure on $\P$, that $\P$ is even in the sense that
\begin{eqnarray}\label{even}
\iint x_i\P(z,t)\, \d z\d t=\iint y_i\P(z,t)\, \d z\d t=\iint t\P(z,t)\, \d z\d t=0
\end{eqnarray}
for $i\in\{1,...,m-1\}$. We set $\P_\lambda(z,t)=\P_\lambda(x,y,t)=\lambda^{-{({\bf q}-4)}}\P(\lambda^{-1}x,\lambda^{-3}y,\lambda^{-2}t)$ whenever $\lambda>0$. Given $\P$ we let
$\P_\lambda$ define a convolution operator as introduced in \eqref{eq1vi}.  To prove Lemma \ref{lemmacruc+} we need to enable partial integration and we therefore use the mapping,
  \begin{eqnarray}\label{dom+ggaint}
  U \owns (w,w_m,y,y_m,t) \mapsto (w,w_m+\P_{\gamma w_m}\psi(w,y,t),y,y_m,t),
 \end{eqnarray} where
 %
 %
%
    \begin{eqnarray}\label{dom+gint}
 U&=&\{(W,Y,t)=(w, w_m,y,y_m, t)\in\mathbb R^{m-1}\times\mathbb R\times\mathbb R^{m-1}\times\mathbb R\times\mathbb R \mid w_m>0\}.
   \end{eqnarray}
We will need the following two lemmas proved in \cite{N1}. Lemma \ref{carlemma-} and Lemma \ref{carlemma} correspond to Lemma 2.1 and Lemma 2.2 in \cite{N1}, respectively.

    \begin{lemma}\label{carlemma-} Let $\psi$ be a function satisfying \eqref{Lip-++a1}  for some constant $0<M_1<\infty$, let
 $\gamma\in (0,1)$ and let  $\P_{\gamma w_m}\psi$ be defined as above for $w_m>0$. Let
 $\theta,\tilde\theta\geq 0$ be integers and let $(\phi_1,..,\phi_{m-1})$ and $(\tilde\phi_1,..,\tilde\phi_{m-1})$ denote multi-indices.
 Let $\ell:=(\theta+|\phi|+3|\tilde\phi|+2\tilde\theta)$. Then
  \begin{eqnarray}\label{con1}
\biggl |\frac {\partial^{\theta+|\phi|+|\tilde\phi|}}{\partial w_m^{\theta}\partial w^{\phi}\partial y^{\tilde\phi}} \biggl ((w\cdot\nabla_y-\partial_t)^{\tilde \theta}(\P_{\gamma w_m}\psi(w,y,t)) \biggr )\biggr |\leq c(m,l)\gamma^{1-(l-\theta)}w_m^{1-l}M_1,
 \end{eqnarray}
 whenever $(W,Y,t)\in U$.
 \end{lemma}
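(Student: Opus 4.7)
The plan is to start from the explicit convolution formula \eqref{eq1vi} at scale $\lambda := \gamma w_m$ and exploit a subtraction trick. Setting $F(w,w_m,y,t):=\P_{\gamma w_m}\psi(w,y,t)$ and using $\int\P_\lambda=1$,
\begin{equation*}
F(w,w_m,y,t)-\psi(w,y,t)=\iint\bigl[\psi(\bar w,\bar y,\bar t)-\psi(w,y,t)\bigr]\,\P_\lambda\bigl((\bar w,\bar y,\bar t)^{-1}\circ(w,y,t)\bigr)\,\d\bar w\,\d\bar y\,\d\bar t.
\end{equation*}
The Lipschitz hypothesis \eqref{Lip-++a1} together with the support condition on $\P$ yields $|\psi(\bar w,\bar y,\bar t)-\psi(w,y,t)|\leq cM_1\lambda$ throughout the integrand, so the base case $\theta=|\phi|=|\tilde\phi|=\tilde\theta=0$ gives $|F-\psi|\leq cM_1\gamma w_m$, matching \eqref{con1} since $\ell=0$.

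For the general estimate, the point is that each spatial differential operator in the lemma can be moved onto the kernel $\P_\lambda$ under the integral sign and there produces another compactly supported smooth kernel of the same type, whose $L^1$-norm is controlled by the appropriate parabolic weight: $\partial_{w_i}$ contributes $\lambda^{-1}$, $\partial_{y_i}$ contributes $\lambda^{-3}$, and the left-invariant operator $w\cdot\nabla_y-\partial_t$ contributes $\lambda^{-2}$. The last claim follows from the identity
\begin{equation*}
\bigl(w\cdot\nabla_y-\partial_t\bigr)_{(w,y,t)}\P_\lambda\bigl((\bar w,\bar y,\bar t)^{-1}\circ(w,y,t)\bigr) = \bigl(a\cdot\nabla_b-\partial_c\bigr)\P_\lambda\bigl|_{(a,b,c)=(\bar w,\bar y,\bar t)^{-1}\circ(w,y,t)},
\end{equation*}
combined with the fact that $|a|\lesssim\lambda$ on the support, so $a\cdot\nabla_b\P_\lambda$ scales like $\lambda\cdot\lambda^{-3}=\lambda^{-2}$, matching $\partial_c\P_\lambda\sim\lambda^{-2}$. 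Crucially, each of these derivative kernels has vanishing integral (by integration by parts and the fact that $\P$ is compactly supported), so the subtraction trick continues to work after any such derivative is applied.

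The $w_m$-derivatives are handled via the chain rule $\partial_{w_m}=\gamma\partial_\lambda$. Since $\int\P_\lambda\equiv 1$, one has $\int\partial_\lambda\P_\lambda=0$, and scaling gives $\partial_\lambda\P_\lambda=\lambda^{-1}\Phi_\lambda$ for another compactly supported, mean-zero kernel $\Phi_\lambda$. Hence each $\partial_{w_m}$ introduces a factor $\gamma\cdot\lambda^{-1}=w_m^{-1}$, which is the key point explaining why no extra $\gamma^{-1}$ appears for $w_m$-derivatives. Iterating all the operators, the composite derivative of $\P_\lambda((\bar w,\bar y,\bar t)^{-1}\circ(w,y,t))$ is a compactly supported, mean-zero kernel whose $L^1$-norm is bounded by $c\,w_m^{-\theta}\lambda^{-(|\phi|+3|\tilde\phi|+2\tilde\theta)}=c\,w_m^{-\theta}(\gamma w_m)^{-(\ell-\theta)}$. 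Multiplying by the Lipschitz bound $cM_1\lambda=cM_1\gamma w_m$ yields exactly $cM_1\gamma^{1-(\ell-\theta)}w_m^{1-\ell}$.

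The main obstacle is the bookkeeping: one must verify inductively that successive applications of these operators preserve the mean-zero property (so the subtraction trick survives at every stage), that the supports remain controlled by the rescaled ball of radius $\lambda$, and that the powers of $\gamma$ and $w_m$ combine correctly. In particular, the fact that $\partial_{w_m}$ contributes $w_m^{-1}$ rather than $(\gamma w_m)^{-1}$ hinges on the exact cancellation between the $\gamma$ produced by the chain rule and the $\gamma^{-1}$ hidden inside $\lambda^{-1}$; keeping this balance straight across mixed derivative patterns is the only delicate bookkeeping in the argument.
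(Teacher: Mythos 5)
Your proposal is correct in substance and is the natural scaling argument; note, however, that the paper does not actually prove Lemma \ref{carlemma-} (it is quoted from \cite{N1}, Lemma 2.1), so there is no in-paper proof to compare against. All the essential points in your outline check out: the chain rule $\partial_{w_m}=\gamma\,\partial_\lambda$, the left-invariance identity $(w\cdot\nabla_y-\partial_t)_{(w,y,t)}\P_\lambda\bigl((\bar z,\bar t)^{-1}\circ(z,t)\bigr)=(a\cdot\nabla_b-\partial_c)\P_\lambda\big|_{(a,b,c)=(\bar z,\bar t)^{-1}\circ(z,t)}$, the preservation of the mean-zero property and of the controlled support under each of the operators, and the resulting $L^1$ bounds (where the $|a|\lesssim\lambda$ cut-off makes $a\cdot\nabla_b$ scale like $\lambda^{-2}$); multiplying by the Lipschitz gain $cM_1\lambda$ then reproduces $c\,M_1\,\gamma^{1-(\ell-\theta)}w_m^{1-\ell}$ exactly.

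One small imprecision: your statement that the base case ``$|F-\psi|\leq cM_1\gamma w_m$ matches \eqref{con1} since $\ell=0$'' conflates $|F-\psi|$ with $|F|$. For $\ell=0$ the estimate \eqref{con1} would bound $|\P_{\gamma w_m}\psi|$ itself, which is false for unbounded Lipschitz $\psi$; the lemma is implicitly meaningful only when at least one derivative is taken, i.e.\ $\ell\geq1$, in which case the subtraction trick applies to the differentiated, mean-zero kernel and not to $\P_\lambda$ itself. This does not affect the rest of your argument, which is correct for all $\ell\geq1$. The only genuinely delicate point you do not write out -- the non-commutativity between $\partial_{w_i}$ and $a\cdot\nabla_b-\partial_c$ (the commutator being $\partial_{b_i}$, which still scales like $\lambda^{-3}$) -- is indeed just bookkeeping and does not break the scaling, as you say.
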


  \begin{lemma}\label{carlemma} Let $\psi$ be a function satisfying \eqref{Lip-++a1} and \eqref{Lip-++a2} for some constants $0<M_1,M_2<\infty$, let
  $\gamma\in (0,1)$ and let  $\P_{\gamma w_m}\psi$ be defined as above for $w_m>0$. Let
 $\theta,\tilde\theta\geq 0$ be integers and let $(\phi_1,..,\phi_{m-1})$ and $(\tilde\phi_1,..,\tilde\phi_{m-1})$ denote multi-indices. Let $\ell:=(\theta+|\phi|+3|\tilde\phi|+2\tilde\theta)$. Let
 \begin{eqnarray*}
\d\mu=\d\mu(W,Y,t):=\biggl |\frac {\partial^{\theta+|\phi|+|\tilde\phi|}}{\partial  w_m^{\theta}\partial  w^{\phi}\partial y^{\tilde\phi}} \biggl (( w\cdot\nabla_y-\partial_t)^{\tilde \theta}(\P_{\gamma  w_m}\psi( w,y,t)) \biggr )\biggr |^2 w_m^{2l-3} \d W\d Y\d t,
 \end{eqnarray*}
 be defined on $U$. Then
 \begin{eqnarray*}\label{con2}
 \mu(U\cap \mathcal{B}_r)\leq c(m,l,M_1,M_2)\gamma^{2-2(l-\theta)}r^{{\bf q}-1},
\end{eqnarray*}
for all balls $\mathcal{B}_r=\mathcal{B}_r(Z_0,t_0)\subset\mathbb R^{N+1}$ centered on $\partial U$, $r>0$.
 \end{lemma}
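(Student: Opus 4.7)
The plan is to reduce everything to the Carleson condition \eqref{Lip-++a2} on $\gamma_\psi^2$ via a pointwise bound for the composite differential operator $D^\alpha := \partial_{w_m}^\theta \partial_w^\phi \partial_y^{\tilde\phi}(w\cdot \nabla_y - \partial_t)^{\tilde\theta}$ applied to $\P_{\gamma w_m}\psi$. Throughout I set $\lambda := \gamma w_m$.

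First I would write $D^\alpha(\P_{\gamma w_m}\psi)(\tilde z, \tilde t)$ as an integral $\iint K(\bar z, \bar t;\tilde z, \tilde t, \lambda)\psi(\bar z, \bar t)\,\d\bar z\d\bar t$, where the kernel $K$ is a finite sum of derivatives of $\P_\lambda$ supported in $\mathcal{B}_\lambda(\tilde z, \tilde t)$. Careful scaling---using that $\partial_{w_m} = \gamma\partial_\lambda$ acts on the scale parameter, that tangential derivatives in $w$, $y$, $t$ scale as $\lambda^{-1}$, $\lambda^{-3}$, $\lambda^{-2}$ respectively, and that the Kolmogorov field $w\cdot \nabla_y - \partial_t$ is left-invariant with respect to the group law \eqref{e70} underlying \eqref{eq1vi}---yields $|K|\lesssim \gamma^\theta \lambda^{-({\bf q}-4)-\ell}$, recovering the content of Lemma \ref{carlemma-}. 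Then I would exploit cancellation: since $\int \P_\lambda = 1$ for every $\lambda$, $K$ has zero integral (for the $\partial_{w_m}$ derivatives this follows from $\partial_\lambda\int \P_\lambda = 0$), so subtracting $\psi(\tilde z, \tilde t)$ from $\psi(\bar z, \bar t)$ inside the integral costs nothing. The evenness \eqref{even} of $\P$ makes the first $\bar y$- and $\bar t$-moments of $K$ vanish as well, so one can further subtract the $\bar x$-linear correction $\P_\lambda(\nabla_x\psi)(\tilde z, \tilde t)\cdot(\bar x - \tilde x)$, leaving inside the integral precisely the remainder $R(\bar z, \bar t;\tilde z, \tilde t, \lambda)$ defining $\gamma_\psi$. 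Cauchy--Schwarz, combined with $\|K\|_{L^2}^2\leq C\gamma^{2\theta}\lambda^{-({\bf q}-4)-2\ell}$ and $\|R\|_{L^2(\mathcal{B}_\lambda)}^2 = \lambda^{{\bf q}-2}\gamma_\psi^2(\tilde z, \tilde t, \lambda)$, then yields
\begin{equation*}
|D^\alpha(\P_{\gamma w_m}\psi)(\tilde z, \tilde t)|^2 \leq C\gamma^{2-2(\ell-\theta)} w_m^{2-2\ell}\gamma_\psi^2(\tilde z, \tilde t, \gamma w_m).
\end{equation*}

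With this pointwise bound in hand, multiplying by the weight $w_m^{2\ell-3}$ reduces the integrand of $\d\mu$ to $C\gamma^{2-2(\ell-\theta)}w_m^{-1}\gamma_\psi^2(\tilde z, \tilde t, \gamma w_m)$. The $y_m$-integration (the integrand is $y_m$-independent and $|y_m|\lesssim r^3$ on $\mathcal{B}_r$) contributes a factor $\lesssim r^3$; the change of variables $\lambda = \gamma w_m$ converts $\d w_m/w_m$ into $\d\lambda/\lambda$; and the remaining $(\tilde z, \tilde t, \lambda)$-integral is of precisely the form controlled by \eqref{Lip-++a2} (after the standard tent-versus-cone comparison if necessary). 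This bounds the triple integral by $CM_2 r^{{\bf q}-4}$, and multiplication by the $r^3$ factor produces the claimed $r^{{\bf q}-1}$.

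The main obstacle is verifying the precise cancellation properties in the first step: the composite kernel corresponding to $\partial_{w_m}^\theta(w\cdot \nabla_y - \partial_t)^{\tilde\theta}$ must have zero integral together with vanishing first $\bar y$- and $\bar t$-moments in order for the linear subtraction to be legitimate. This requires careful bookkeeping because the Kolmogorov field is intertwined both with the group convolution \eqref{eq1vi} (so left-invariance must be used to commute it with the convolution) and with the scale parameter $\lambda = \gamma w_m$ (so its commutator with $\partial_\lambda$ must be controlled); the borderline case where a single pure $\partial_w$-derivative acts alone requires slightly separate bookkeeping since one effectively uses only the zero-integral property. Once the kernel analysis is pinned down, the ensuing Carleson-measure estimate is routine.
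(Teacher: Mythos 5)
The paper does \emph{not} prove Lemma~\ref{carlemma}; it is imported verbatim from~\cite{N1} (stated there as Lemma~2.2), so there is no in-paper proof to compare against. Your blind attempt is therefore compared against what the argument in~\cite{N1} and its parabolic precursors~\cite{LM,HL,H,N} actually do. The overall skeleton you propose---express $D^\alpha(\P_{\gamma w_m}\psi)$ via a kernel $K$ supported on scale $\lambda=\gamma w_m$, exploit vanishing moments of $K$ to subtract the first-order affine approximation, apply Cauchy--Schwarz to land on $\gamma_\psi$, and then integrate against the Carleson condition \eqref{Lip-++a2}---is precisely the right strategy and matches the reference. That much is correct.

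There are, however, two genuine issues you should not wave away. First, the moment you actually need to kill in order to subtract the term $\P_\lambda(\nabla_x\psi)(\tilde z,\tilde t)\cdot(\bar x-\tilde x)$ is the first $\bar x$-moment of $K$, not the $\bar y$- and $\bar t$-moments as you write. More importantly, that $\bar x$-moment does \emph{not} vanish when $D^\alpha$ is a single pure tangential derivative $\partial_{w_i}$, $i\neq m$ (so $\theta=0$, $|\phi|=1$, $\ell=1$): one computes $\iint (\bar x_j-\tilde x_j)K\,\d\bar z\,\d\bar t=\delta_{ij}$, so the subtraction leaves a residual $\P_\lambda(\partial_{x_i}\psi)(\tilde z,\tilde t)$ of size $\approx M_1$ rather than $\gamma_\psi$. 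Your claimed pointwise bound by $\gamma_\psi$ is simply false in this case, and your remark that ``one effectively uses only the zero-integral property'' does not save it---the zero-integral property alone recovers only Lemma~\ref{carlemma-} and the weight $w_m^{2\ell-3}=w_m^{-1}$ then produces a logarithmically divergent integral. Either this derivative combination is never actually invoked in the application (plausible, given the structure of the pull-back coefficients, where first-order tangential derivatives of $\P_{\gamma w_m}\psi$ only enter multiplied against $\nabla_X A$, which carries its own Carleson control), or a different argument is needed; in any case ``slightly separate bookkeeping'' understates the problem, and you cannot leave this gap open. Second, the final step where you bound the triple integral over the cylinder $\mathcal{B}_{cr}\times(0,cr)$ by $M_2 r^{{\bf q}-4}$ is \emph{not} a ``standard tent-versus-cone comparison.'' As written, \eqref{Lip-++a2} integrates over $\mathcal{B}_\lambda(z,t)$---a ball whose radius equals the integration variable---so it is a cone condition, and a cone condition does not control a cylinder: a measure of the form $M\,\d\tilde z\,\d\tilde t\,\d\lambda/\lambda$ with constant density satisfies the cone bound uniformly but has infinite mass on every cylinder. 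You need to read \eqref{Lip-++a2} as a genuine Carleson (box) condition over $\mathcal{B}_r(z,t)\times(0,r)$---which is surely what is intended, since otherwise the hypothesis is vacuously implied by \eqref{Lip-++a1}---and then your final step is a direct application, not a comparison of geometries. You should make that reading explicit and drop the appeal to a tent-versus-cone equivalence that does not hold in the direction you need it.
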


 Using Lemma \ref{carlemma-} we see that that there exists $\hat\gamma=\hat\gamma(m,M_1)\in (0,1)$ such that if $\gamma\in (0,\hat\gamma)$ then
    \begin{eqnarray}\label{1-1}\frac 1 2\leq 1+\frac {\partial}{\partial w_m}(\P_{\gamma  w_m}\psi)(w,y,t)\leq \frac 32,
    \end{eqnarray}
   whenever $(w,w_m,y,y_m,t)\in U$. This implies, in particular, that the map in \eqref{dom+ggaint}  is one-to-one.

   Defining $v$ as the pull-back of $u(Z,t):=\omega(Z,t,S)$ under the map in \eqref{dom+ggaint}, i.e.
    \begin{eqnarray}v(w,w_m,y,y_m,t):=u(w,w_m+\P_{\gamma w_m}\psi(w,y,t),y,y_m,t)),
 \end{eqnarray}
 we see that to prove Lemma \ref{lemmacruc+} it suffices to prove that
              \begin{eqnarray}\label{keyestalla}
 I_\epsilon :=\iiint_{\mathbb R^{N+1}_+}|\nabla_{W}v|^2\Psi_\epsilon^2w_m\, \d W\d Y\d t \leq c(m,\kappa, M_1, M_2,\Gamma)\rho_0^{{\bf q}-1},
    \end{eqnarray}
    for $\epsilon>0$ small, where $$\R^{N+1}_+=\R^{m-1}\times\lbrace w_m>0 \rbrace\times \R^{m}\times \R,$$
    and where $\Psi_\epsilon$ is a smooth cut-off function such that $\Psi_\epsilon\equiv 1$ on $$([-c,c]^m\times [-c,c]^m\times [-c,c])\cap \{w_m\geq 2\epsilon\},$$ and $\Psi_\epsilon\equiv 0$ on $$\bigl (([-2c,2c]^m\times [-2c,2c]^m\times [-2c,2c])\setminus ([-c,c]^m\times [-c,c]^m\times [-c,c])\bigr )\cap \{w_m<\epsilon\},$$ where $c=c(m,\kappa,M_1)\gg 1$.

    Furthermore, the  pull-back $v$ is a (weak) solution to
    \begin{eqnarray}\label{e-kolm-ndggha-int}
   \tilde{\L} v=\nabla_{W}\cdot (\tilde A\nabla_{W} v)+\tilde B\cdot\nabla_{W} v+ D\cdot\nabla_{Y,t} v=0
    \end{eqnarray}
    in $U$ where the $\tilde A=(\tilde a_{i,j})$ and $\tilde B=(\tilde b_i)$ depend on $\L$ and the pull-back map in \eqref{dom+ggaint}. Here and in the following  $\nabla_{Y,t}=(\nabla_Y,\partial_t)$ and $D$ is the vector valued function
    \begin{eqnarray}\label{e-kolm-ndggha-intD}
        D:=(w,w_m+\P_{\gamma w_m}\psi(w,y,t),-1).
    \end{eqnarray}

    Using that $\Omega\subset\mathbb R^{N+1}$ is an (unbounded) admissible ($y_m$-independent) Lipschitz domain  with constants $(M_1,M_2)$ in the sense of Definition \ref{car}, that $A$ satisfies \eqref{eq2} with constant $\kappa$,  \eqref{struct}, and Lemma \ref{carlemma-}, it follows that
    $\tilde A$ and $\tilde B$ are measurable and locally bounded satisfying
    \begin{eqnarray}\label{eq2++}
      \tilde\kappa^{-1}|\xi|^2\leq \sum_{i,j=1}^{m}\tilde a_{i,j}(W,Y,t)\xi_i\xi_j,\quad \ \ |\tilde A(W,Y,t)\xi\cdot\zeta|\leq \tilde\kappa|\xi||\zeta|,
    \end{eqnarray}
    for some $\tilde\kappa\in [1,\infty)$, and for all $\xi,\zeta\in \mathbb R^{m}$, $(W,Y,t)\in\mathbb R^{N+1}$, and
    \begin{eqnarray}\label{eq2++a}
      w_m|\nabla_W \tilde A(W,Y,t)|+w_m|\tilde B(W,Y,t)|\leq c.
    \end{eqnarray}
    Here $\tilde \kappa$ and $c$ depends on $m$, $\kappa$ and $M_1$ only. In addition it is important to note that $\tilde A$ is symmetric. Furthermore, using Lemma \ref{carlemma},  and that the measures $\mu_1$ and $\mu_2$ defined in \eqref{measure1} are Carleson measures on $\Omega$ with constant $\Gamma$, we see that if we introduce $\d\tilde \mu_i=\d\tilde\mu_i(W,Y,t)$, $i\in\{1,2,3\}$,
 \begin{equation}
 \begin{split}
\d\tilde\mu_1&:= |\nabla_W\tilde A|^2w_m\, \d W\d Y\d t,\\
\d\tilde\mu_2&:=|\tilde B|^2w_m\, \d W\d Y\d t,\\
\d\tilde\mu_3&:=|D\cdot\nabla_{Y,t}\tilde A|^2w_m^3\, \d W\d Y\d t,
 \end{split}
 \end{equation}
as  measures on $U$, then
 \begin{eqnarray}\label{con2+}
 \tilde\mu_i(U\cap \mathcal{B}_\rho(w_0,0,Y_0,t_0))\leq c(m,\kappa, M_1, M_2,\Gamma)\rho^{{\bf q}-1},
\end{eqnarray}
whenever $(w_0,0,Y_0,t_0)\in\partial U$, $\rho>0$, and $\mathcal{B}_\rho(w_0,0,Y_0,t_0)\subset\mathbb R^{N+1}$, and for $i\in\{1,2,3\}$. In particular, all measures in $\{\tilde\mu_i\}$ define Carleson measures on $U$. Furthermore, we emphasize that by our assumptions
      \begin{eqnarray}\label{e-kolm-ndggha-lla+jja}
 \mbox{$\tilde A$ and $\tilde B$ are independent of $y_m$}.
    \end{eqnarray}

To prove \eqref{keyestalla} it suffices to prove the following lemma.
    \begin{lemma}\label{Carleson}
Let $\sigma\in (0,1)$ be a given degree of freedom. Then there exists a finite constant $c=c(m,\kappa, M_1, M_2,\Gamma,\sigma)$, such that
		  \begin{eqnarray*}
		  I_{\epsilon}\leq \sigma I_{\epsilon}+c\rho_0^{{\bf q}-1}.
		  \end{eqnarray*}
\end{lemma}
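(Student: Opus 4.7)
The plan is to set up a weighted integration-by-parts (IBP) identity for the pulled-back equation $\tilde{\L}v=0$ by testing it against the energy multiplier $\phi_\epsilon := v\,\Psi_\epsilon^2\,w_m$, producing $I_\epsilon$ as the leading term on the left-hand side and leaving on the right-hand side a collection of error terms to be absorbed, via Cauchy--Schwarz and the Carleson measure properties of $\tilde A$, $\tilde B$, and $D\cdot\nabla_{Y,t}\tilde A$, into $\sigma I_\epsilon + c\,\rho_0^{{\bf q}-1}$. Since $\Psi_\epsilon$ has compact support in $(W,Y,t)$ and vanishes on a neighborhood of $\{w_m=0\}$, any IBP in $W$ is free of boundary contributions. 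Testing against $\phi_\epsilon$ and integrating by parts on the second-order term yields
\[
 \iiint \Psi_\epsilon^2 w_m\,\tilde A\nabla_W v\cdot\nabla_W v \;=\; -\iiint v\,\tilde A\nabla_W v\cdot\nabla_W(\Psi_\epsilon^2 w_m) \;+\; \iiint v\,\Psi_\epsilon^2 w_m\bigl(\tilde B\cdot\nabla_W v + D\cdot\nabla_{Y,t}v\bigr),
\]
and the uniform ellipticity of $\tilde A$ then gives $\tilde\kappa^{-1}I_\epsilon$ as a lower bound for the LHS.

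The drift term is handled via the divergence-free identity $\nabla_{Y,t}\cdot D = 0$, which follows by direct computation from $D=(w,w_m+\P_{\gamma w_m}\psi,-1)$ together with the $y_m$-independence of $\psi$. Combined with $\nabla_{Y,t}w_m=0$, an IBP in $(Y,t)$ converts this term into $-\tfrac12\iiint v^2 w_m\,D\cdot\nabla_{Y,t}\Psi_\epsilon^2$, a localized cutoff integral bounded by $c\rho_0^{{\bf q}-1}$ using $|v|\le 1$ and that the integrand is supported where $\nabla\Psi_\epsilon\neq 0$. The first-order term $\iiint v\,\Psi_\epsilon^2 w_m\,\tilde B\cdot\nabla_W v$ and the tangential cutoff piece $2\Psi_\epsilon w_m\nabla_W\Psi_\epsilon$ of $\nabla_W(\Psi_\epsilon^2 w_m)$ are both handled by Cauchy--Schwarz with a small free parameter $\eta>0$, producing absorbable pieces of size $\eta I_\epsilon$ plus residues that by the Carleson bound on $\tilde\mu_2$ and $|v|\le 1$ are dominated by $c(\eta)\Gamma\rho_0^{{\bf q}-1}$.

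The main obstacle is the Rellich-type term $J:=\iiint v\,\Psi_\epsilon^2(\tilde A\nabla_W v)_m$ coming from the $\Psi_\epsilon^2\,\mathbf{e}_m$ component of $\nabla_W(\Psi_\epsilon^2 w_m)$. Symmetrizing $v\partial_{w_j}v = \tfrac12\partial_{w_j}v^2$ and integrating by parts in each $w_j$ (using that $\Psi_\epsilon$ vanishes near $\{w_m=0\}$ to kill the boundary when $j=m$) gives
\[
 J \;=\; -\tfrac12\iiint v^2(\tilde A\mathbf{e}_m)\cdot\nabla_W\Psi_\epsilon^2 \;-\;\tfrac12\iiint v^2\Psi_\epsilon^2\,(\nabla_W\cdot\tilde A)_m.
\]
The first summand is a cutoff term treated as above. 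The second is the crux, since $|\nabla_W\tilde A|$ is only Carleson in $L^2(w_m\,dWdYdt)$ rather than $L^1$, so a direct bound using $|v|\le 1$ is insufficient. The plan here is to invoke the equation $\tilde{\L}v=0$ once more in order to transfer one of the factors of $v$ into $\nabla_W v$: an additional IBP in $(Y,t)$ brings into play the data $D\cdot\nabla_{Y,t}\tilde A$, whose Carleson measure $\tilde\mu_3$ carries precisely the weight $w_m^3$ matching the anisotropic scaling, after which Cauchy--Schwarz combined with the Carleson bounds on $\tilde\mu_1$ and $\tilde\mu_3$ applied to the bounded function $v$ yields $|J|\le\eta I_\epsilon + c(\eta)\rho_0^{{\bf q}-1}$. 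Combining all estimates and choosing the small parameters sufficiently small (depending on $\tilde\kappa$ and $\sigma$), we arrive at $I_\epsilon \leq \sigma I_\epsilon + c\rho_0^{{\bf q}-1}$ as claimed.
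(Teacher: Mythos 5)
Your setup — testing $\tilde{\L}v=0$ against $v\Psi_\epsilon^2 w_m$ and the observation $\nabla_{Y,t}\cdot D = 0$ (which is correct, and which does dispatch the drift term cleanly as a cutoff integral) — is a reasonable starting point, and you have correctly identified the Rellich-type term as the crux. However, your treatment of that term does not close, and your proposed fix is not a proof. The symmetrization $v\,\partial_{w_j}v=\tfrac12\partial_{w_j}(v^2)$ followed by IBP in $w_j$ destroys all gradient factors of $v$ and produces $-\tfrac12\iiint v^2\Psi^2\,(\nabla_W\cdot\tilde A)_m$, whose integrand carries no power of $w_m$; the available Carleson condition is on $|\nabla_W\tilde A|^2 w_m\,\d W\d Y\d t$, and any Cauchy--Schwarz splitting therefore forces a factor $\iiint v^2\Psi^2 w_m^{-1}$, which is not bounded uniformly in $\epsilon$. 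Your remedy — ``invoke the equation once more to transfer one factor of $v$ into $\nabla_W v$'' followed by ``IBP in $(Y,t)$ to bring in $\tilde\mu_3$'' — has no concrete mechanism: the offending term contains no derivative of $v$ to feed into the equation and no $(Y,t)$-derivative to integrate by parts, and the $\tilde\mu_3$-weight $w_m^3$ is even more mismatched than $\tilde\mu_1$. Inserting $\partial_{w_m}(w_m)=1$ and integrating by parts also fails here because it generates the uncontrolled second derivatives $\partial^2_{w_m w_j}\tilde a_{m,j}$.

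The paper's proof is organized quite differently and shows why this is subtle. First, the ellipticity lower bound is set up with a $\tilde a_{m,m}^{-1}$-normalization (their $I_{i,j}=2\iiint \tilde a_{m,m}^{-1}\tilde a_{i,j}(\partial_{w_i}v)(\partial_{w_j}v)\Psi^2 w_m$), so that the pure $(m,m)$-piece of the Rellich term has a \emph{constant} coefficient and collapses to a genuine cutoff integral; for the mixed $(m,j)$-pieces with $j\ne m$, the paper inserts $\partial_{w_m}(w_m)=1$, integrates by parts once in $w_m$, then once more in $w_j$, and exploits a delicate cancellation (their $I_{422}=-I_{41}$) that relies on the symmetry $\tilde a_{m,j}=\tilde a_{j,m}$. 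This normalization, though, makes the drift term $\iiint \tilde a_{m,m}^{-1}v\Psi^2 w_m(D\cdot\nabla_{Y,t}v)$ \emph{not} reducible by $\nabla_{Y,t}\cdot D=0$ alone (the term $\iiint v^2\Psi^2 w_m(D\cdot\nabla_{Y,t})\tilde a_{m,m}^{-1}$ suffers exactly the weight mismatch you worry about), and this is precisely what forces the paper to integrate by parts in $w_m$ instead, introducing the auxiliary integrals $J$, $K$, $L$, $L_i$, $M$ of \eqref{e-kolm-ndggha-lla+gg} and a coupled system of inequalities \eqref{acomuu++} that is then solved by absorption. The crucial step in that cascade is the estimate \eqref{sv} for $L_m$, where the $y_m$-independence of $\psi$ and $\tilde A$ enters in an essential way (via \eqref{1-1} and identities like $I_{1132}$ being reducible to a cutoff term) that is quite different from, and much deeper than, the identity $\nabla_{Y,t}\cdot D=0$. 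In short: you have found a genuine simplification of the drift term, but the hard part of the lemma lives in the Rellich term, your plan for it would fail, and the paper's resolution involves a substantial machinery ($\tilde a_{m,m}^{-1}$-normalization plus the $J,K,L,M$-cascade) that your proposal does not anticipate.
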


Note that by construction $I_{\epsilon}$ is finite. The proof of Lemma \ref{Carleson} is given in the next section and from here on, and hence also in the proof, we will not indicate the dependence on $\epsilon$ and simply write $I$ for $I_\epsilon$ and we note -- and this is a consequence of the introduction of $\epsilon$ -- that no boundary terms will survive when we perform partial integration. In addition we will also here, with a slight abuse of notation, let $Z:=(W,Y)$ and $\d Z\d t:=\d W\d Y\d t$.

In the proof of Lemma \ref{Carleson} we will also use the quantities
 \begin{eqnarray}\label{e-kolm-ndggha-lla+gg}
 J&:=&\iiint_{\mathbb R^{N+1}_+}|D\cdot\nabla_{Y,t}v|^2\Psi^4w_m^3\, \d Z\d t,\notag\\
 K&:=&\sum_{i=1}^m\iiint_{\mathbb R^{N+1}_+}\ |\nabla_{W}(\partial_{w_i}v)|^2 \Psi^4w_m^3\, \d Z\d t,\notag\\
L&:=&\iiint_{\mathbb R^{N+1}_+}\ |\nabla_{Y}v|^2 \Psi^6w_m^5\, \d Z\d t,\\
 L_{i}&:=&
 \iiint_{\mathbb R^{N+1}_+}|\partial_{y_i} v|^2\Psi^6w_m^5\, \d Z\d t,\notag\\
 M&:=&\sum_{i=1}^m  \iiint_{\mathbb R^{N+1}_+} |\nabla_W(\partial_{y_i}v)|^2\Psi^8w_m^7\, \d Z\d t.\notag
      \end{eqnarray}
      In the rather technical proof to follow, the crucial estimate in the proof of Lemma \ref{Carleson} is stated in \eqref{sv} below and states that
      \begin{eqnarray*}
L_{m}\lesssim M^{1/2}J^{1/2}+I+J,
 \end{eqnarray*}
 where $\lesssim$ means that we can control the constants. This estimate uses, in a crucial way it seems, that $\psi$ and $A$, and hence $\tilde A$, do not depend on $y_m$. It seems that this additional  degree of freedom is crucial for us to be able to complete the argument.

\section{Proof of Lemma \ref{Carleson}}\label{sec5}
We will first prove that
            \begin{eqnarray}\label{auxest1}
            I\leq c\rho_0^{{\bf q}-1}+\sigma I+\tilde\sigma J
      \end{eqnarray}
      where $\sigma,\tilde\sigma\in (0,1)$ are degrees of freedom and $c$ is a positive constant  which, unless otherwise stated, only depends on $(m,\kappa, M_1, M_2,\Gamma)$ and $\sigma, \tilde\sigma$. In general, in the following $c$ will denote a generic such constant, not necessarily the same at each instance. We often write $c_1\lesssim c_2$ and this means that $c_1/c_2$ is bounded by a constant depending only on $(m,\kappa, M_1, M_2,\Gamma)$, $\sigma$ and $\tilde\sigma$.

To start the proof of \eqref{auxest1} we note, using  ellipticity, that
    \begin{eqnarray*}\label{est4}
   I\lesssim \sum_{i,j=1}^m I_{i,j},
    \end{eqnarray*}
    where
    \begin{align*}
             I_{i,j}:=2\iiint_{\mathbb R^{N+1}_+}\tilde a_{m,m}^{-1}\tilde a_{i,j}(\partial_{w_i}v)(\partial_{w_j}v)\Psi^2w_m\, \d Z\d t.
    \end{align*}
    Assume first that $i\neq m$. Then, integrating by parts in $I_{i,j}$ with respect to $w_i$ we see that
         \begin{eqnarray*}
             I_{i,j}&=&-2\iiint_{\mathbb R^{N+1}_+}\tilde a_{m,m}^{-1}v\partial_{w_i}(\tilde a_{i,j}\partial_{w_j}v)\Psi^2w_m\, \d Z\d t\notag\\
             &&-2\iiint_{\mathbb R^{N+1}_+}\partial_{w_i}\tilde a_{m,m}^{-1}\tilde a_{i,j}v(\partial_{w_j}v)\Psi^2w_m\, \d Z\d t\notag\\
             &&-4\iiint_{\mathbb R^{N+1}_+}\tilde a_{m,m}^{-1} {\tilde a_{i,j}}v(\partial_{w_j}v)w_m\Psi\partial_{w_i}\Psi\, \d Z\d t.
    \end{eqnarray*}
    Similarly we see that
             \begin{eqnarray*}
             I_{m,j}&=&-2\iiint_{\mathbb R^{N+1}_+}\tilde a_{m,m}^{-1}v\partial_{w_m}(\tilde a_{m,j}\partial_{w_j}v)\Psi^2w_m\, \d Z\d t\notag\\
             &&-2\iiint_{\mathbb R^{N+1}_+}\partial_{w_m}\tilde a_{m,m}^{-1}\tilde a_{m,j} v(\partial_{w_j}v)\Psi^2w_m\, \d Z\d t\notag\\
              &&-2\iiint_{\mathbb R^{N+1}_+}{\tilde a_{m,m}^{-1}}{\tilde a_{m,j}}v(\partial_{w_j}v)\Psi^2\, \d Z\d t\notag\\
               &&-4\iiint_{\mathbb R^{N+1}_+}{\tilde a_{m,m}^{-1}}{\tilde a_{m,j}}v(\partial_{w_j}v)w_m\Psi\partial_{w_m}\Psi\, \d Z\d t.
    \end{eqnarray*}
    Put together
        \begin{eqnarray*}
             I\leq I_1+I_2+I_3+I_4,
               \end{eqnarray*}
               where
    \begin{eqnarray*}
             I_1&:=&-2\sum_{i,j}\iiint_{\mathbb R^{N+1}_+}\tilde a_{m,m}^{-1}v\partial_{w_i}(\tilde a_{i,j}\partial_{w_j}v)\Psi^2w_m\, \d Z\d t,\notag\\
    I_2&:=&-2\sum_{i,j}\iiint_{\mathbb R^{N+1}_+}\partial_{w_i}\tilde a_{m,m}^{-1}\tilde a_{i,j}v(\partial_{w_j}v)\Psi^2w_m\, \d Z\d t,\notag\\
               I_3&:=&-4\sum_{i,j}\iiint_{\mathbb R^{N+1}_+}\tilde a_{m,m}^{-1} {\tilde a_{i,j}}v(\partial_{w_j}v)w_m\Psi\partial_{w_i}\Psi\, \d Z\d t,\notag\\
              I_4&:=&-2\sum_{j}\iiint_{\mathbb R^{N+1}_+}{\tilde a_{m,m}^{-1}}{\tilde a_{m,j}}v(\partial_{w_j}v)\Psi^2\, \d Z\d t.
               \end{eqnarray*}

               We first analyze $I_1$. Using the equation, i.e. \eqref{e-kolm-ndggha-int}, we obtain
                   \begin{eqnarray*}
             I_1&=&2\iiint_{\mathbb R^{N+1}_+}\tilde a_{m,m}^{-1}v (D\cdot\nabla_{Y,t}v)\Psi^2w_m\, \d Z\d t\notag\\
             &&+2\sum_{i}\iiint_{\mathbb R^{N+1}_+}\tilde a_{m,m}^{-1}vb_i\partial_{w_i}v\Psi^2w_m\, \d Z\d t\notag\\
             &=:&I_{11}+I_{12},
             \end{eqnarray*}
             and \begin{eqnarray*}
  I_{12}&\leq&c\biggl (\iiint_{\mathbb R^{N+1}_+}v^2|\tilde B|^2\Psi^2w_m\, \d Z\d t\biggr )^{1/2}
  \biggl (\iiint_{\mathbb R^{N+1}_+}|\nabla_W v|^2\Psi^2w_m\, \d Z\d t\biggr )^{1/2}\notag\\
  &\leq& c\rho_0^{{\bf q}-1}+\sigma I,
  \end{eqnarray*}
  by \eqref{con2+} applied to $\tilde\mu_2$. Furthermore, integrating by parts with respect to $w_m$ we see that
                        \begin{eqnarray*}
             I_{11}=I_{111}+I_{112}+I_{113}+I_{114},
             \end{eqnarray*}
             where
                      \begin{eqnarray*}
             I_{111}&=&-\iiint_{\mathbb R^{N+1}_+}\partial_{w_m}\tilde a_{m,m}^{-1}v(D\cdot\nabla_{Y,t}v) \Psi^2w_m^2\, \d Z\d t,\notag\\
             I_{112}&=&-\iiint_{\mathbb R^{N+1}_+}\tilde a_{m,m}^{-1}\partial_{w_m}v(D\cdot\nabla_{Y,t}v)  \Psi^2w_m^2\, \d Z\d t,\notag\\
             I_{113}&=&-\iiint_{\mathbb R^{N+1}_+}\tilde a_{m,m}^{-1}v\partial_{w_m}(D\cdot\nabla_{Y,t}v) \Psi^2w_m^2\, \d Z\d t,\notag\\
             I_{114}&=&-2\iiint_{\mathbb R^{N+1}_+}\tilde a_{m,m}^{-1}v(D\cdot\nabla_{Y,t}v) w_m^2\Psi \partial_{w_m}\Psi\, \d Z\d t.
             \end{eqnarray*}
             Focusing on $I_{111}$ we see that
             \begin{eqnarray*}
             I_{111}&=&\iiint_{\mathbb R^{N+1}_+}\biggl (\frac {\partial_{w_m} \tilde a_{m,m}}{\tilde a_{m,m}^2}\biggr )v(D\cdot\nabla_{Y,t}v)\Psi^2w_m^2\, \d Z\d t\notag\\
             &\leq&c\biggl (\iiint_{\mathbb R^{N+1}_+}|\partial_{w_m}\tilde a_{m,m}|^2v^2 w_m\, \d Z\d t\biggr)^{1/2}J^{1/2}\notag\\
             &\leq & c\rho_0^{{\bf q}-1}+\tilde\sigma J,
  \end{eqnarray*}
  by \eqref{con2+} applied to $\tilde\mu_1$. To continue we see that
       \begin{eqnarray*}
         I_{113}&=&-\iiint_{\mathbb R^{N+1}_+}\tilde a_{m,m}^{-1}v(D\cdot\nabla_{Y,t}\partial_{w_m}v) \Psi^2w_m^2\, \d Z\d t,\notag\\
             &&-\iiint_{\mathbb R^{N+1}_+}\tilde a_{m,m}^{-1}v(1+\partial_{w_m}\P_{\gamma w_m}\psi(w,y,t))(\partial_{y_m}v) \Psi^2w_m^2\, \d Z\d t\notag\\
             &=:&I_{1131}+I_{1132}.
  \end{eqnarray*}
  To estimate $I_{1132}$ we write
                 \begin{eqnarray*}
         I_{1132}&=&-\frac 12\iiint_{\mathbb R^{N+1}_+}\tilde a_{m,m}^{-1}(1+\partial_{w_m}\P_{\gamma w_m}\psi(w,y,t))(\partial_{y_m}v^2) \Psi^2w_m^2\, \d Z\d t\notag\\
             &=&\iiint_{\mathbb R^{N+1}_+}\tilde a_{m,m}^{-1}(1+\partial_{w_m}\P_{\gamma w_m}\psi(w,y,t)) v^2 w_m^2\Psi\partial_{y_m}\Psi\, \d Z\d t,
             \end{eqnarray*}
             where we have used that $\tilde a_{m,m}$ and $\psi$ are independent of $y_m$. In particular, $|I_{1132}|\leq c\rho_0^{{\bf q}-1}$. Focusing on
             $I_{1131}$,
                      \begin{eqnarray*}
         I_{1131}&=&\iiint_{\mathbb R^{N+1}_+}((D\cdot\nabla_{Y,t})\tilde a_{m,m}^{-1})v(\partial_{w_m}v) \Psi^2w_m^2\, \d Z\d t,\notag\\
             &&+\iiint_{\mathbb R^{N+1}_+}\tilde a_{m,m}^{-1}(D\cdot\nabla_{Y,t}v)(\partial_{w_m}v) \Psi^2w_m^2\, \d Z\d t\notag\\
             &&+2\iiint_{\mathbb R^{N+1}_+}\tilde a_{m,m}^{-1}v(\partial_{w_m}v) w_m^2\Psi(D\cdot\nabla_{Y,t})\Psi\, \d Z\d t\notag\\
         &=:&I_{11311}+I_{11312}+I_{11313}.
  \end{eqnarray*}
  Again using \eqref{con2+} applied to $\tilde\mu_3$ and elementary estimates we see that
                           \begin{eqnarray*}
       |I_{11311}|+|I_{11313}|\leq c\rho_0^{{\bf q}-1}+\sigma I.
  \end{eqnarray*}
   Furthermore, \begin{eqnarray*}
  I_{11312}=-I_{112}.
  \end{eqnarray*}
  In particular, we have proved that
          \begin{eqnarray*}
             I_1\leq c\rho_0^{{\bf q}-1}+\sigma I+\tilde\sigma J+|I_{114}|.
               \end{eqnarray*}

  To estimate $I_{114}$ we write
  \begin{eqnarray*}
                     I_{114}&=&-\iiint_{\mathbb R^{N+1}_+}\tilde a_{m,m}^{-1}(D\cdot\nabla_{Y,t}v^2)w_m^2\Psi \partial_{w_m}\Psi\, \d Z\d t\notag\\
             &=&\iiint_{\mathbb R^{N+1}_+}((D\cdot\nabla_{Y,t})\tilde a_{m,m}^{-1})v^2w_m^2\Psi \partial_{w_m}\Psi\, \d Z\d t\notag\\
             &&+\iiint_{\mathbb R^{N+1}_+}\tilde a_{m,m}^{-1}v^2w_m^2\Psi((D\cdot\nabla_{Y,t}) \partial_{w_m}\Psi)\, \d Z\d t\notag\\
             &&+\iiint_{\mathbb R^{N+1}_+}\tilde a_{m,m}^{-1}v^2w_m^2 ((D\cdot\nabla_{Y,t})\Psi)\partial_{w_m}\Psi\, \d Z\d t\notag\\
             &=:&I_{1141}+I_{1142}+I_{1143}.
             \end{eqnarray*}
             Using \eqref{con2+} applied to $\tilde\mu_3$, and by now familiar arguments, we see that $|I_{114}|\leq c\rho_0^{{\bf q}-1}$. Put together we can conclude that
               \begin{eqnarray*}
              I_1\leq c\rho_0^{{\bf q}-1}+\sigma I+\tilde\sigma J.
             \end{eqnarray*}
            It is straightforward to see that
      \begin{eqnarray*}
             |I_2|+|I_3|\leq c\rho_0^{{\bf q}-1}+ \sigma I. \end{eqnarray*}
             To estimate $I_4$ we write
      \begin{eqnarray*}
         I_4=-2\sum_{j}\iiint_{\mathbb R^{N+1}_+}{\tilde a_{m,m}^{-1}} {\tilde a_{m,j}}v(\partial_{w_j}v)\Psi^2\, \d Z\d t.\notag\\
               \end{eqnarray*}
             We first consider the term in the definition of $I_4$ which corresponds to $j=m$ and we note that
               \begin{eqnarray*}
         \biggl|\iiint_{\mathbb R^{N+1}_+}\partial_{w_m}(v^2)\Psi^2\, \d Z\d t\biggr |=2\biggl|\iiint_{\mathbb R^{N+1}_+}v^2\Psi\partial_{w_m}\Psi\, \d Z\d t\biggr|\leq  c\rho_0^{{\bf q}-1}.
               \end{eqnarray*}
          Next we consider the terms
                in the definition of $I_4$ which corresponds to $j\neq m$. By integration by parts we see that
                     \begin{equation*}
                     \begin{split}
         -2&\iiint_{\mathbb R^{N+1}_+}{\tilde a_{m,m}^{-1}} {\tilde a_{m,j}}v(\partial_{w_j}v)\partial_{w_m}(w_m)\Psi^2\, \d Z\d t\\
         &= 2\iiint_{\mathbb R^{N+1}_+}\partial_{w_m}({\tilde a_{m,m}^{-1}} {\tilde a_{m,j}})v\partial_{w_j}v\Psi^2w_m\, \d Z\d t\\
         &\quad +2\iiint_{\mathbb R^{N+1}_+}{\tilde a_{m,m}^{-1}} {\tilde a_{m,j}}\partial_{w_m} v\partial_{w_j}v\Psi^2w_m\, \d Z\d t\\
         &\quad +2\iiint_{\mathbb R^{N+1}_+}{\tilde a_{m,m}^{-1}} {\tilde a_{m,j}}v\partial_{w_mw_j}v\Psi^2w_m\, \d Z\d t\notag\\
         &\quad +4\iiint_{\mathbb R^{N+1}_+}{\tilde a_{m,m}^{-1}} {\tilde a_{m,j}}v\partial_{w_j}vw_m\Psi\partial_{w_m}\Psi\, \d Z\d t.
                     \end{split}
               \end{equation*}

               Let

                                    \begin{eqnarray*}
         I_{41}&:=&2\sum_{j\neq m}\iiint_{\mathbb R^{N+1}_+}{\tilde a_{m,m}^{-1}} {\tilde a_{m,j}}\partial_{w_m}v\partial_{w_j}v\Psi^2w_m\, \d Z\d t,\notag\\
         I_{42}&:=&2\sum_{j\neq m}\iiint_{\mathbb R^{N+1}_+}{\tilde a_{m,m}^{-1}} {\tilde a_{m,j}}v\partial_{w_mw_j}v\Psi^2w_m\, \d Z\d t.
               \end{eqnarray*}
               By the above deductions, and using by now familiar arguments, we can conclude that \begin{eqnarray*}
                                    |I_4-I_{41}-I_{42}|\leq c\rho_0^{{\bf q}-1}+\sigma I.
               \end{eqnarray*}

               To estimate $I_{42}$ we use that $j\neq m$. Integrating by parts
                            \begin{eqnarray*}
         I_{42}&=&-2\sum_{j\neq m}\iiint_{\mathbb R^{N+1}_+}\partial_{w_j}({\tilde a_{m,m}^{-1}} {\tilde a_{m,j}})v\partial_{w_m}v\Psi^2w_m\, \d Z\d t\notag\\
         &&-2\sum_{j\neq m}\iiint_{\mathbb R^{N+1}_+}{\tilde a_{m,m}^{-1}} {\tilde a_{m,j}}\partial_{w_j}v\partial_{w_m}v\Psi^2w_m\, \d Z\d t\notag\\
         &&-4\sum_{j\neq m}\iiint_{\mathbb R^{N+1}_+}{\tilde a_{m,m}^{-1}} {\tilde a_{m,j}}v\partial_{w_m}vw_m\Psi\partial_{w_j}\Psi\, \d Z\d t\notag\\
         &=:&I_{421}+I_{422}+I_{423}. \end{eqnarray*}
         Note that
                        \begin{eqnarray*}
I_{422}=-I_{41},
               \end{eqnarray*}
               and that
                                                       \begin{eqnarray*}
      |I_{421}|+|I_{423}|\leq c\rho_0^{{\bf q}-1}+\sigma I,
               \end{eqnarray*}
               by familiar arguments. Summarizing we can conclude that
               \begin{eqnarray*}
I&\lesssim&|I_1|+|I_2|+|I_3|+|I_4|\leq c\rho_0^{{\bf q}-1}+\sigma I+\tilde\sigma J,
             \end{eqnarray*}
             where $\sigma$, $\tilde\sigma$, are degrees of freedom.  This completes the proof of the estimate in \eqref{auxest1}.

             The next step is to estimate $J$ in a similar fashion.
\subsection{Estimating the term $J$}
              To estimate $J$ we write
      \begin{eqnarray*}\label{e-kolm-ndggha-lla+ggco}
    J=-\iiint_{\mathbb R^{N+1}_+}(D\cdot\nabla_{Y,t}v)
    \bigl (\nabla_{W}\cdot (\tilde A\nabla_{W} v)+\tilde B\cdot\nabla_{W}v\bigr)\Psi^4w_m^3\, \d Z\d t,
      \end{eqnarray*}
      and
      \begin{eqnarray}
J=J_{1}+J_{2}+J_{3}+J_{4},
\end{eqnarray}
where
\begin{eqnarray*}
J_{1}&:=&-\sum_{j}\iiint_{\mathbb R^{N+1}_+}\ (D\cdot\nabla_{Y,t}v)\partial_{w_m}(\tilde a_{m,j}\partial_{w_j}v)
\Psi^4w_m^3\, \d Z\d t,\notag\\
J_{2}&:=&-\sum_{i\neq m}\iiint_{\mathbb R^{N+1}_+}\ (D\cdot\nabla_{Y,t}v)\partial_{w_i}(\tilde a_{i,m}\partial_{w_m}v)
\Psi^4w_m^3\, \d Z\d t,\notag\\
J_{3}&:=&-\sum_{i\neq m}\sum_{j\neq m}\iiint_{\mathbb R^{N+1}_+}\ (D\cdot\nabla_{Y,t}v)
\partial_{w_i}(\tilde a_{i,j}\partial_{w_j}v) \Psi^4w_m^3\, \d Z\d t,\notag\\
J_{4}&:=&-\sum_{i}\iiint_{\mathbb R^{N+1}_+}\ (D\cdot\nabla_{Y,t}v){\tilde b_i}\partial_{w_i}v \Psi^4w_m^3\, \d Z\d t.
\end{eqnarray*}
Using \eqref{eq2++a} we immediately see that
\begin{eqnarray}\label{bound}
|J_{1}|+|J_{2}|+|J_{4}|\lesssim I^{1/2}J^{1/2}+\biggl (\iiint_{\mathbb R^{N+1}_+}\ |\nabla_W (\partial_{w_m}v)|^2 \Psi^4w_m^3\, \d Z\d t\biggr )^{1/2}J^{1/2}.
\end{eqnarray}

Focusing on $J_{3}$, and integrating by parts with respect to $w_i$, we see that
\begin{eqnarray}
J_{3}&=&\sum_{i\neq m}\sum_{j\neq m}\iiint_{\mathbb R^{N+1}_+}\ \partial_{w_i}(D\cdot\nabla_{Y,t}v)(\tilde a_{i,j}\partial_{w_j}v) \Psi^4w_m^3\, \d Z\d t\notag\\
&&+4\sum_{i\neq m}\sum_{j\neq m}\iiint_{\mathbb R^{N+1}_+}\ (D\cdot\nabla_{Y,t}v)(\tilde a_{i,j}\partial_{w_j}v) \partial_{w_i}(\Psi)w_m^3\Psi^3 \, \d Z\d t\\
&=:&J_{31}+J_{32},\notag
\end{eqnarray}
and that $|J_{32}|\leq cI^{1/2}J^{1/2}$. Furthermore,
\begin{eqnarray*}
J_{31}&=&\sum_{i\neq m}\sum_{j\neq m}\iiint_{\mathbb R^{N+1}_+}\ (\partial_{y_i}v)(\tilde a_{i,j}\partial_{w_j}v) \Psi^4w_m^3\, \d Z\d t\notag\\
&&+\sum_{i\neq m}\sum_{j\neq m}\iiint_{\mathbb R^{N+1}_+}\ (D\cdot\nabla_{Y,t}(\partial_{w_i}v))(\tilde a_{i,j}\partial_{w_j}v) \Psi^4w_m^3\, \d Z\d t\notag\\
&&+\sum_{i\neq m}\sum_{j\neq m}\iiint_{\mathbb R^{N+1}_+}\ (\partial_{w_i}\P_{\gamma w_m}\psi(w,y,t))(\partial_{y_m}v)(\tilde a_{i,j}\partial_{w_j}v) \Psi^4w_m^3\, \d Z\d t\notag\\
&=:&J_{311}+J_{312}+J_{313}.
\end{eqnarray*}
Then
\begin{eqnarray}
|J_{311}|+|J_{313}|\lesssim \biggl (\iiint_{\mathbb R^{N+1}_+}\ |\nabla_{Y}v|^2 \Psi^6w_m^5\, \d Z\d t\biggr )^{1/2} I^{1/2}.
\end{eqnarray}
To estimate $J_{312}$ we lift
the vector field  $D\cdot\nabla_{Y,t}$ through partial integration and
use the symmetry of the matrix $\{\tilde a_{i,j}\}$
to see that
\begin{eqnarray*}
2J_{312}&=&-\sum_{i\neq m}\sum_{j\neq m}\iiint_{\mathbb R^{N+1}_+}\ (\partial_{w_i}v)(D\cdot\nabla_{Y,t}(\tilde a_{i,j}))\partial_{w_j}v \Psi^4w_m^3\, \d Z\d t\notag\\
&&-4\sum_{i\neq m}\sum_{j\neq m}\iiint_{\mathbb R^{N+1}_+}\ (\partial_{w_i}v)(\tilde a_{i,j}\partial_{w_j}v)
(D\cdot\nabla_{Y,t}(\Psi))w_m^3\Psi^3\, \d Z\d t\notag\\
&=:&J_{3121}+J_{3122}.
\end{eqnarray*}
Then, by familiar arguments,
\begin{eqnarray}
|J_{3121}|+|J_{3122}|\leq cI.
\end{eqnarray}

Let $K$ and $L$ be as introduced in \eqref{e-kolm-ndggha-lla+gg}.  Then, putting all estimates together  we can conclude that
\begin{equation}
\begin{split}
J&\leq |J_{1}|+|J_{2}|+|J_{3}|+|J_{4}|\\
&\lesssim  I+I^{1/2}J^{1/2}+J^{1/2}K^{1/2}+I^{1/2}L^{1/2}.
\end{split}
\end{equation}
Hence
\begin{eqnarray}
J&\lesssim&  I+K+I^{1/2}L^{1/2}.
\end{eqnarray}
To proceed we have to estimate $K$ and $L$.

\subsection{Estimating the term $K$} To start the argument for $K$  we introduce $\tilde v=\partial_{w_i} v$ and we use \eqref{e-kolm-ndggha-int} to conclude that
 $\tilde v$ solves
\begin{equation}\label{e-kolm-nd+a}
\begin{split}
   &\nabla_{W}\cdot (\tilde A\nabla_{W} \tilde v)+\tilde B\cdot\nabla_{W} \tilde v+(D\cdot\nabla_{Y,t})\tilde v\\
   &=-\nabla_{W}\cdot (\partial_{w_i} \tilde A\nabla_{W} v)-\partial_{w_i} \tilde B\cdot\nabla_{W} v-\partial_{y_i}v-\partial_{w_i}\P_{\gamma w_m}\psi(w,y,t)\partial_{y_m}v
   \end{split}
    \end{equation}
    in $U$.
    Multiplying the equation in \eqref{e-kolm-nd+a} with $\tilde v\Psi^4w_m^3$, integrating and using Cauchy-Schwarz we see that
 \begin{eqnarray}\label{Lintbparts}
K\lesssim I+|K_1|+|K_2|+|K_3|+|K_4|,
\end{eqnarray}
where
\begin{eqnarray*}
K_1&:=&\iiint_{\mathbb R^{N+1}_+} \bigl ((D\cdot\nabla_{Y,t})\tilde v\bigr )\tilde v\Psi^4w_m^3\, \d Z\d t,\notag\\
K_2&:=& \iiint_{\mathbb R^{N+1}_+} \bigl (\nabla_{W}\cdot ((\partial_{w_i} \tilde A)\nabla_{W}v)\bigr )\tilde v\Psi^4w_m^3\, \d Z\d t,\notag\\
K_3&:=& \iiint_{\mathbb R^{N+1}_+} \bigl (\partial_{w_i}\tilde B\cdot \nabla_{W}v\bigr )\tilde v\Psi^4w_m^3\, \d Z\d t,\notag\\
K_4&:=&\iiint_{\mathbb R^{N+1}_+} (\partial_{y_i}v+\partial_{w_i}\P_{\gamma w_m}\psi(w,y,t)\partial_{y_m}v)\tilde v\Psi^4w_m^3\, \d Z\d t.
\end{eqnarray*}
Using \eqref{eq2++a} we immediately see that
 \begin{eqnarray}
|K_2|+|K_3|+|K_4|\lesssim I+I^{1/2}K^{1/2}+I^{1/2}L^{1/2}.
\end{eqnarray}
Furthermore,
 \begin{equation}\label{acom}
 \begin{split}
    2|K_1| &\leq 4\biggl |\iiint_{\mathbb R^{N+1}_+} \tilde v^2\bigl ((w,w_m+\P_{\gamma w_m}\psi(w,y,t))
    \cdot \nabla_Y-\partial_t\bigr)(\Psi)w_m^3\Psi^3\, \d Z\d t\biggr |\\
    &\lesssim I,
 \end{split}
\end{equation}
and we can conclude that
 \begin{eqnarray}
K\lesssim I+|K_1|+|K_2|+|K_3|+|K_4|\lesssim K\lesssim I+I^{1/2}K^{1/2}+I^{1/2}L^{1/2}.
\end{eqnarray}
Hence
 \begin{eqnarray}
K\lesssim I+I^{1/2}L^{1/2}.
\end{eqnarray}

\subsection{Estimating the term $L$ ($L_i$)}
Focusing on $L$ we write
 \begin{eqnarray}
 L=\sum_{i=1}^m L_{i}
\end{eqnarray}
where $L_i$ is defined in \eqref{e-kolm-ndggha-lla+gg}.
Note that
 \begin{eqnarray}\label{dyivrel}
 \partial_{y_i} v=-(D\cdot\nabla_{Y,t})(\partial_{w_i}v)+\partial_{w_i}(D\cdot\nabla_{Y,t})(v)-(\partial_{w_i}\P_{\gamma w_m}\psi(w,y,t))\partial_{y_m}v.
\end{eqnarray}
Hence,
 \begin{eqnarray*}
 L_{i}&=&-\iiint_{\mathbb R^{N+1}_+} (\partial_{y_i} v)(D\cdot\nabla_{Y,t})(\partial_{w_i}v)w_m^5\Psi^6\, \d Z\d t\notag\\
 &&+\iiint_{\mathbb R^{N+1}_+} (\partial_{y_i} v)\partial_{w_i}(D\cdot\nabla_{Y,t})(v)w_m^5\Psi^6\, \d Z\d t\notag\\
 &&-\iiint_{\mathbb R^{N+1}_+} (\partial_{y_i} v)((\partial_{w_i}\P_{\gamma w_m}\psi(w,y,t))\partial_{y_m}v)w_m^5\Psi^6\, \d Z\d t\notag\\
 &=:&L_{i,1}+L_{i,2}+L_{i,3}.
\end{eqnarray*}
Using partial integration 
we immediately see that
 \begin{eqnarray}
|L_{i,2}|\lesssim M^{1/2}J^{1/2}+L_{i}^{1/2}J^{1/2}
 \end{eqnarray}
where also $M$ was defined in \eqref{e-kolm-ndggha-lla+gg}. Furthermore,
\begin{eqnarray*}
L_{i,1}&=&\iiint_{\mathbb R^{N+1}_+} (D\cdot\nabla_{Y,t})(\partial_{y_i} v)(\partial_{w_i}v)w_m^5\Psi^6\, \d Z\d t\notag\\
&&+6\iiint_{\mathbb R^{N+1}_+} (\partial_{y_i} v)(\partial_{w_i}v)w_m^5\Psi^5(D\cdot\nabla_{Y,t})\Psi\, \d Z\d t\notag\\
&=&\iiint_{\mathbb R^{N+1}_+} \partial_{y_i} (D\cdot\nabla_{Y,t})(v)(\partial_{w_i}v)w_m^5\Psi^6\, \d Z\d t\notag\\
&&-\iiint_{\mathbb R^{N+1}_+} (\partial_{y_i} \P_{\gamma w_m}\psi(w,y,t))(\partial_{y_m} v)(\partial_{w_i}v)w_m^5\Psi^6\, \d Z\d t\notag\\
&&+6\iiint_{\mathbb R^{N+1}_+} (\partial_{y_i} v)(\partial_{w_i}v)w_m^5\Psi^5(D\cdot\nabla_{Y,t})\Psi\, \d Z\d t.
\end{eqnarray*}
Integrating by parts  we have
 \begin{eqnarray*}
L_{i,1} &=&-\iiint_{\mathbb R^{N+1}_+} (D\cdot\nabla_{Y,t})(v)(\partial_{w_iy_i}v)w_m^5\Psi^6\, \d Z\d t\notag\\
&&-6\iiint_{\mathbb R^{N+1}_+} (D\cdot\nabla_{Y,t})(v)(\partial_{w_i}v)w_m^5\Psi^5\partial_{y_i} \Psi\, \d Z\d t\notag\\
&&-\iiint_{\mathbb R^{N+1}_+} (\partial_{y_i} \P_{\gamma w_m}\psi(w,y,t))(\partial_{y_m} v)(\partial_{w_i}v)w_m^5\Psi^6\, \d Z\d t\notag\\
&&+6\iiint_{\mathbb R^{N+1}_+} (\partial_{y_i} v)(\partial_{w_i}v)w_m^5\Psi^5(D\cdot\nabla_{Y,t})\Psi\, \d Z\d t.
\end{eqnarray*}
Hence we can first conclude that
 \begin{eqnarray}
|L_{i,1}|\lesssim J^{1/2}M^{1/2}+I^{1/2}J^{1/2}+I^{1/2}(L_{i}^{1/2}+L_{m}^{1/2})
\end{eqnarray}
and then by collecting the estimates
 \begin{eqnarray}\label{hata}
L_{i}\lesssim I^{1/2}J^{1/2}+I^{1/2}(L_{i}^{1/2}+L_{m}^{1/2})+M^{1/2}J^{1/2}+L_{i}^{1/2}J^{1/2}+|L_{i,3}|.
\end{eqnarray}

We now first consider the case $i=m$. Using  \eqref{1-1} and the above we immediately see that
  \begin{eqnarray}\label{sv}
L_{m}\lesssim M^{1/2}J^{1/2}+I+J.
 \end{eqnarray}
 Consider now $i\neq m$. Then, using \eqref{hata} we have
  \begin{eqnarray}\label{hata+}
L_{i}\lesssim I^{1/2}J^{1/2}+I^{1/2}(L_{i}^{1/2}+L_{m}^{1/2})+M^{1/2}J^{1/2}+L_{i}^{1/2}J^{1/2}+L_{i}^{1/2}L_{m}^{1/2}.
\end{eqnarray}
Hence
  \begin{eqnarray}
L_{i}\lesssim I+I^{1/2}J^{1/2}+I^{1/2}L_{m}^{1/2}+M^{1/2}J^{1/2}+J+L_{m}
 \end{eqnarray}
 for $i\neq m$. In particular, using \eqref{sv}
   \begin{eqnarray}
L_{i}\lesssim I^{1/2}J^{1/2}+M^{1/2}J^{1/2}+I+J\mbox{ for all }i\in\{1,...,m\}.
 \end{eqnarray}

 Still the auxiliary term $M$ has to be estimated.

 \subsection{Estimating the term $M$}  To estimate $M$ we
introduce $\tilde v=\partial_{y_i} v$ and using the equation we see that
 $\tilde v$ solves
\begin{equation}\label{e-kolm-nd+auu}
\begin{split}
   &\nabla_{W}\cdot (\tilde A\nabla_{W} \tilde v)+\tilde B\cdot\nabla_{W} \tilde v+(D\cdot\nabla_{Y,t})\tilde v\\
   &=-\nabla_{W}\cdot (\partial_{y_i}\tilde A\nabla_{W} v)-\partial_{y_i}\tilde B\cdot\nabla_{W} v+\partial_{y_i}\P_{\gamma w_m}\psi(w,y,t)\partial_{y_m}v,
   \end{split}
\end{equation}
    in $U$. Multiplying this equation with $\tilde v w_m^7\Psi^8$  and arguing similarly as to the estimates in the case of the expression $K$ we derive
     \begin{eqnarray}\label{acomuu}
M\lesssim L+I^{1/2}L^{1/2}+M^{1/2}L^{1/2}+K^{1/2}L^{1/2}.
\end{eqnarray}
Hence,
     \begin{eqnarray}\label{acomuu+}
M\lesssim L+I^{1/2}L^{1/2}+K^{1/2}L^{1/2}.
\end{eqnarray}

\subsection{Completing the proof of Lemma \ref{Carleson}}
We are now ready to complete the proof of Lemma \ref{Carleson} by collecting all the terms and estimates developed above. To summarize we have proved that
\begin{equation}\label{acomuu++}
    \begin{split}
     I&\leq c\rho_0^{{\bf q}-1}+\sigma I+\tilde\sigma J,\\
     J&\lesssim   I+K+I^{1/2}L^{1/2},\\
     K&\lesssim I+I^{1/2}L^{1/2},\\
     L&\lesssim I^{1/2}J^{1/2}+M^{1/2}J^{1/2}+I+J,\\
    M&\lesssim L+I^{1/2}L^{1/2}+K^{1/2}L^{1/2}.
    \end{split}
\end{equation}
We again note that by construction of the test function $\Psi$ we can ensure that $I,\dots,M$ are finite. Using \eqref{acomuu++} we first see that
     \begin{eqnarray*}\label{acomuu++a}
     J+K&\lesssim&   I+\epsilon_1L,\notag\\
     L&\lesssim& I +J+\epsilon_2M,\notag\\
M&\lesssim& L+I+\epsilon_3K,
\end{eqnarray*}
where $\epsilon_1,\epsilon_2$ and $\epsilon_3$ are positive degrees of freedom.  Using the estimates for $L$ and $M$ we have
     \begin{eqnarray*}\label{acomuu++b}
     L&\lesssim& I +J+\epsilon_4K.
\end{eqnarray*}
Hence
     \begin{eqnarray*}\label{acomuu++c}
     J+K\leq   c(I+c\epsilon_1( I +J+\epsilon_4K)),
\end{eqnarray*}
and we can conclude that
     \begin{eqnarray*}\label{acomuu++d}
     J+K\lesssim  I.
\end{eqnarray*}
In particular,
\begin{eqnarray*}
     I\leq c\rho_0^{{\bf q}-1}+\sigma I+\tilde \sigma I
\end{eqnarray*}
and the proof is complete.
\qed

\section{Proof of auxiliary lemmas}\label{sec6}

In this section we prove Lemma \ref{existcover} and Lemma \ref{lemmacruc} and in  the proof  we need a number of estimates for non-negative solutions recently established in \cite{LN}.
Lemma \ref{lem4.7bol} and Lemma \ref{lem4.5-Kyoto1} below are Lemma 4.15 and Lemma 5.2 in \cite{LN}, respectively.

\begin{lemma}\label{lem4.7bol}   Let $\Omega\subset\mathbb R^{N+1}$ is an unbounded ($y_m$-independent) Lipschitz domain with constant $M_1$ in the sense of Definition \ref{car}. Then there exist
 $\Lambda=\Lambda(m,M_1)$, $1\leq \Lambda<\infty$,   $c=c(m,\kappa,M_1)$,  $1\leq c<\infty$, and $\gamma=\gamma(m,\kappa,M_1)$, $0<\gamma<\infty$, such that the following is true. Let $(Z_0,t_0)\in\partial\Omega$ and $r>0$.  Assume that  $u$ is a non-negative (weak) solution to $\L u=0$ in $\Omega\cap \mathcal{B}_{2r}(Z_0,t_0)$ and consider  $\rho$, $\tilde\rho$, $0<\tilde\rho\leq\rho<r/c$. Then
\begin{equation}\label{coneset-lem39}
\begin{split}
&u(A_{\tilde\rho,\Lambda}^+(Z_0,t_0))\leq c(\rho/\tilde\rho)^\gamma u(A_{\rho,\Lambda}^+(Z_0,t_0)),\\
&u(A_{\tilde\rho,\Lambda}^-(Z_0,t_0))\geq c^{-1} (\tilde\rho/\rho)^\gamma u(A_{\rho,\Lambda}^-(Z_0,t_0)).
\end{split}
\end{equation}
\end{lemma}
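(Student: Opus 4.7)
The plan is to prove both inequalities by a standard Harnack-chain iteration adapted to the Kolmogorov group structure. First I would introduce dyadic scales $\rho_j := 2^{-j}\rho$ for $j=0,1,\ldots,J$, with $J$ the least integer such that $\rho_J \leq \tilde\rho$, so that $J \lesssim \log_2(\rho/\tilde\rho)$. At each of these scales I consider the reference points $A^\pm_{\rho_j,\Lambda}(Z_0,t_0)$, and the goal is to control $u$ at consecutive reference points by a uniform multiplicative constant, which on iteration will produce the power $(\rho/\tilde\rho)^\gamma$.

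The core step is, for each $j$, to link $A^+_{\rho_{j+1},\Lambda}(Z_0,t_0)$ to $A^+_{\rho_j,\Lambda}(Z_0,t_0)$ by a Harnack chain of uniformly bounded length. Writing these points out via the group law \eqref{e70} and the dilations \eqref{dil.alpha.i}, one sees they sit at intrinsic distance $\approx \rho_j$ from one another and that their time coordinates are ordered, namely $A^+_{\rho_{j+1},\Lambda}$ lies strictly in the past of $A^+_{\rho_j,\Lambda}$ (thanks to the $+\rho^2$ time-offset in \eqref{pointsref2}). Choosing $\Lambda = \Lambda(m,M_1)$ sufficiently large relative to the Lipschitz constant $M_1$, one ensures
\[
d\bigl(A^\pm_{\rho_j,\Lambda}(Z_0,t_0),\partial\Omega\bigr) \gtrsim \rho_j,
\]
so that the chain stays safely inside $\Omega \cap \mathcal{B}_{2r}(Z_0,t_0)$. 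The non-isotropic $y_m$-offset $\mp \tfrac{2}{3}\Lambda\rho^3$ is exactly the one dictated by the drift $X\cdot\nabla_Y$, and it is this choice that makes a chain of uniformly bounded length possible. Applying the scale- and translation-invariant Harnack inequality of \cite{Ietal} at each link produces $u(A^+_{\rho_{j+1},\Lambda}) \leq C_0\, u(A^+_{\rho_j,\Lambda})$ with $C_0 = C_0(m,\kappa,M_1)$.

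Iterating this bound for $j=0,\ldots,J-1$ gives
\[
u\bigl(A^+_{\tilde\rho,\Lambda}(Z_0,t_0)\bigr) \leq C_0^J\, u\bigl(A^+_{\rho,\Lambda}(Z_0,t_0)\bigr) \leq c\,(\rho/\tilde\rho)^\gamma\, u\bigl(A^+_{\rho,\Lambda}(Z_0,t_0)\bigr),
\]
with $\gamma = \log_2 C_0$, which is the first inequality in \eqref{coneset-lem39}. The second inequality is obtained by the symmetric (time-reversed) argument: since $A^-_{\rho,\Lambda}$ has time-offset $-\rho^2$, the point $A^-_{\tilde\rho,\Lambda}$ is in the future of $A^-_{\rho,\Lambda}$, so the same Harnack chain construction, now run forward in time from $A^-_{\rho,\Lambda}$ to $A^-_{\tilde\rho,\Lambda}$, yields $u(A^-_{\rho,\Lambda}) \leq C_0^J u(A^-_{\tilde\rho,\Lambda})$, which rearranges to the claimed lower bound.

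The main obstacle is the explicit construction of the Harnack chain under the non-Euclidean group law with non-isotropic dilations: one needs to verify that a uniformly bounded number of intermediate points interpolates between $A^\pm_{\rho_{j+1},\Lambda}$ and $A^\pm_{\rho_j,\Lambda}$, that consecutive points are comparable in the quasi-distance $d$ at the appropriate scale, that their time-coordinates are monotone in the right direction, and that all of them stay $\approx \rho_j$-far from $\partial\Omega$ (so that the interior Harnack inequality of \cite{Ietal} applies at each step). This forces a careful choice of $\Lambda$ depending on $M_1$; once this verification is in place, the rest is routine iteration.
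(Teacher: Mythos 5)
The paper does not prove this lemma here; it is imported verbatim from \cite{LN} (Lemma~4.15 there), so there is no in-paper proof to compare against. Your proposal — a dyadic Harnack-chain iteration, choosing $\Lambda=\Lambda(m,M_1)$ large enough so that $d(A^\pm_{\rho_j,\Lambda}(Z_0,t_0),\partial\Omega)\gtrsim\rho_j$, linking consecutive reference points $A^\pm_{\rho_{j+1},\Lambda}(Z_0,t_0)$ and $A^\pm_{\rho_j,\Lambda}(Z_0,t_0)$ by uniformly many applications of the scale- and group-invariant interior Harnack inequality of \cite{Ietal}, and iterating over $J\approx\log_2(\rho/\tilde\rho)$ scales — is the natural argument and, as far as I can tell, the one used in \cite{LN}. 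The time-ordering you identify is exactly right: $A^+_{\tilde\rho,\Lambda}$ lies in the past of $A^+_{\rho,\Lambda}$ (time offsets $+\tilde\rho^2<+\rho^2$), so the chain from past to future yields $u(A^+_{\tilde\rho,\Lambda})\leq C_0^J u(A^+_{\rho,\Lambda})$, while $A^-_{\tilde\rho,\Lambda}$ lies in the future of $A^-_{\rho,\Lambda}$, giving $u(A^-_{\rho,\Lambda})\leq C_0^J u(A^-_{\tilde\rho,\Lambda})$ which rearranges to the lower bound; this is the only part where one could easily slip, and you got it correct.

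Two small remarks. First, a computation with \eqref{e70} and \eqref{e70+} shows that $\bigl(A^+_{\rho_j,\Lambda}\bigr)^{-1}\circ A^+_{\rho_{j+1},\Lambda}$ has norm of order $\Lambda\rho_j$, not $\rho_j$; this is harmless (it is comparable to the distance from the reference point to $\partial\Omega$, so the chain is still of bounded length), but it does mean the number of Harnack links per dyadic step depends on $\Lambda$, and hence on $M_1$, which is consistent with $\gamma=\gamma(m,\kappa,M_1)$ rather than merely $\gamma(m,\kappa)$. Second, the Harnack inequality in \cite{Ietal} is not time-symmetric and its admissible regions are sheared by the drift $X\cdot\nabla_Y$; the specific $y_m$-offset $\mp\tfrac23\Lambda\rho^3$ in \eqref{pointsref2} is chosen precisely so that consecutive reference points sit in the right "propagation" relation — you flag this as the main obstacle, which is accurate, and a complete write-up would have to verify the geometric placement at each link (in particular that the choice of sign for $A^-$ remains admissible when running the chain forward in time).
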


\begin{lemma}\label{lem4.5-Kyoto1} Let $\Omega\subset\mathbb R^{N+1}$ is an unbounded ($y_m$-independent) Lipschitz domain with constant $M_1$ in the sense of Definition \ref{car}.
Let $(Z_0,t_0)\in\partial\Omega$ and $r>0$. Let $\theta\in (0,1)$ be given. Then there exists  $c=c(m,\kappa,M_1,\theta)$,
$1 \leq c < \infty$, such that  following holds.  Assume that  $u$ is a non-negative (weak) solution to $\L u=0$ in $\Omega\cap \mathcal{B}_{2r}(Z_0,t_0)$,
vanishing continuously on $\partial\Omega\cap \mathcal{B}_{2r}(Z_0,t_0)$. Then
\begin{eqnarray}
\sup_{\Omega\cap \mathcal{B}_{r/c}(Z_0,t_0)}u\leq \theta\sup_{\Omega\cap \mathcal{B}_{2r}(Z_0,t_0)}u.
\end{eqnarray}
\end{lemma}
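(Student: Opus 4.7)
The plan is to deduce the estimate from a scale-invariant \emph{boundary H\"older continuity} for non-negative weak solutions of $\L u = 0$ vanishing on a portion of $\partial \Omega$. Concretely, I aim to establish that there exist $\alpha = \alpha(m,\kappa,M_1) \in (0,1)$ and $C = C(m,\kappa,M_1)$ such that
\begin{eqnarray*}
 u(Z,t) \leq C \biggl( \frac{d((Z,t),(Z_0,t_0))}{r} \biggr)^{\alpha} \sup_{\Omega \cap \mathcal{B}_{2r}(Z_0,t_0)} u
\end{eqnarray*}
whenever $(Z,t) \in \Omega \cap \mathcal{B}_{r}(Z_0,t_0)$. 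Once such a bound is in hand the lemma is immediate: since $u(Z_0,t_0) = 0$ by the continuous vanishing, one chooses $c = c(m,\kappa,M_1,\theta)$ so large that $C/c^{\alpha} \leq \theta$, and the stated inequality follows. Using the invariance of $\L$ under the Lie group translations \eqref{e70} and the anisotropic dilations $\delta_r$ in \eqref{dil.alpha.i} (both of which preserve the ellipticity \eqref{eq2} and the Lipschitz condition \eqref{Lip-++a1}), I first reduce to $(Z_0,t_0) = (0,0)$, $r = 1$, and $\sup_{\Omega \cap \mathcal{B}_{2}(0,0)} u = 1$.

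The H\"older bound I would prove by iterating a single-scale oscillation decay: there exist $\lambda = \lambda(m,\kappa,M_1) \in (0,1)$ and $\eta = \eta(m,\kappa,M_1) \in (0,1)$ such that
\begin{eqnarray*}
 \sup_{\Omega \cap \mathcal{B}_{\lambda \rho}(0,0)} u \leq \eta \sup_{\Omega \cap \mathcal{B}_{\rho}(0,0)} u
\end{eqnarray*}
for every $\rho \in (0, 1/2]$; iterating along $\rho_k = \lambda^k$ then yields the H\"older decay with exponent $\alpha = \log(1/\eta) / \log(1/\lambda)$. For the single-scale inequality I would construct a supersolution $\Phi_\rho$ of $\L$ in $\Omega \cap \mathcal{B}_{\rho}(0,0)$ with $\Phi_\rho \geq 1$ on the lateral face $\Omega \cap \partial \mathcal{B}_{\rho}$, $\Phi_\rho \geq 0$ on $\partial \Omega \cap \mathcal{B}_{\rho}$, and $\sup_{\Omega \cap \mathcal{B}_{\lambda \rho}} \Phi_\rho \leq \eta$. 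The continuous vanishing of $u$ on $\partial\Omega \cap \mathcal{B}_{2r}$, together with the weak comparison principle for $\L$ that follows from the weak formulation \eqref{weak3} and the uniform ellipticity of $A$ in the $X$-variables, then gives $u \leq S_\rho \Phi_\rho$ in $\Omega \cap \mathcal{B}_{\rho}$, where $S_\rho := \sup_{\Omega \cap \mathcal{B}_{\rho}} u$, and the decay follows by evaluating on $\Omega \cap \mathcal{B}_{\lambda \rho}$.

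The main obstacle is the construction of the barrier $\Phi_\rho$, since $\L$ is strongly degenerate, with no diffusion in the $Y$ variable, and the natural metric $d$ is non-Euclidean; classical barriers of the form $1 - (\mathrm{dist}(\cdot,\partial\Omega)/\rho)^\beta$ are therefore not admissible. My strategy would be to build $\Phi_\rho$ around the non-tangential reference points $A_{\rho,\Lambda}^\pm(0,0)$ of \eqref{pointsref2} by propagating a pointwise bound of the form $u(A_{\rho,\Lambda}^-(0,0)) \leq c\, S_\rho$ to the smaller ball $\mathcal{B}_{\lambda \rho}$ via a Harnack chain constructed from the interior Harnack inequality of \cite{Ietal} together with the scale comparison at reference points in Lemma \ref{lem4.7bol}. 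The $y_m$-independence of $\psi$ is essential here: it ensures that motion in the $y_m$-direction along integral curves of the drift $X \cdot \nabla_Y$ stays inside $\Omega$, so that such Harnack chains are available uniformly in a non-tangential neighborhood of the boundary point. With the barrier in place, comparison yields the one-step oscillation decay, iteration delivers the boundary H\"older bound, and the conclusion of the lemma follows as in the first paragraph.
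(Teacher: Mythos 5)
The overall architecture you propose — reduce to a scale-invariant boundary H\"older bound, prove it by iterating a one-step oscillation decay, and deduce the one-step decay from a barrier $\Phi_\rho$ plus the comparison principle — is standard and, at the structural level, sound. The normalization via the group law \eqref{e70} and the dilations \eqref{dil.alpha.i} is legitimate, and the maximum-principle comparison $u\le S_\rho\Phi_\rho$ is correct once such a $\Phi_\rho$ exists. Note, however, that this paper does not itself prove the statement: it is imported verbatim as Lemma~5.2 of \cite{LN}, so there is no proof here to compare against.

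The gap in your proposal is exactly where you flag the main obstacle, and the sketch you offer there does not close. A barrier $\Phi_\rho$ must be a fixed function of the geometry (it must not depend on $u$), yet your construction talks about ``propagating a pointwise bound of the form $u(A_{\rho,\Lambda}^-(0,0))\le c\,S_\rho$.'' That bound is trivially true with $c=1$ (it is a sup over a set containing the point), and propagating it with the interior Harnack inequality of \cite{Ietal} only produces estimates of the form $u(Z,t)\le c'u(A_{\rho,\Lambda}^-)$ with $c'\ge 1$, so there is no quantitative gain and no $\eta<1$. Lemma~\ref{lem4.7bol} also cannot supply what you want: it is a \emph{growth} bound, $u(A_{\tilde\rho,\Lambda}^+)\le c(\rho/\tilde\rho)^\gamma u(A_{\rho,\Lambda}^+)$, whose constant blows up as $\tilde\rho/\rho\to 0$, whereas the one-step decay needs the opposite, decay-type inequality $u(A_{\tilde\rho,\Lambda}^+)\lesssim (\tilde\rho/\rho)^\alpha u(A_{\rho,\Lambda}^+)$ for solutions vanishing on the boundary — which is precisely the conclusion you are trying to establish, so using it here would be circular. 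What is actually needed is a quantitative \emph{lower} bound on $M-u$ (equivalently, on the parabolic measure of $\partial\Omega\cap\mathcal B_{2r}$) that holds uniformly over $\Omega\cap\mathcal B_{\lambda\rho}$, not just at a single reference point. The route that makes this work for these operators is not a barrier: one extends $u$ by zero across $\partial\Omega$ to obtain a non-negative subsolution $\tilde u$ in the full ball $\mathcal B_\rho$ (this uses $u\ge 0$ and $u=0$ on $\partial\Omega\cap\mathcal B_\rho$, together with the weak formulation \eqref{weak3}), observes that $v:=M-\tilde u$ is a non-negative supersolution equal to $M$ on the set $\mathcal B_\rho\setminus\Omega$, which has a fixed positive Lebesgue-measure fraction of $\mathcal B_\rho$ by the Lipschitz graph condition \eqref{Lip-++a1}, and then invokes the weak Harnack inequality from \cite{Ietal} to conclude $\inf v\ge c^{-1}M$ on an interior sub-box, giving $u\le (1-c^{-1})M$ there. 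This is the step your barrier sketch does not reach, and without it the iteration does not start.
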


\begin{remark}\label{remnot}
Let $\Omega\subset\mathbb R^{N+1}$ be an unbounded ($y_m$-independent) Lipschitz domain with constant $M_1$ in the sense of Definition \ref{car}. The constants  $\Lambda=\Lambda(m,M_1)$, $1\leq \Lambda<\infty$,   $c=c(m,\kappa,M_1)$,  $1\leq c<\infty$, referred to in Lemma \ref{lem4.7bol} are fixed in \cite{LN}.  In the following we also let $\Lambda$ and $c$ be determined accordingly.
\end{remark}

\begin{lemma}\label{bourg} Let $\Omega\subset\mathbb R^{N+1}$ is an unbounded ($y_m$-independent) Lipschitz domain with constant $M_1$ in the sense of Definition \ref{car}. Let $\Lambda=\Lambda(m,M_1)$ be in accordance with Remark \ref{remnot}. Let $(Z_0,t_0)\in\partial\Omega$ and $r>0$. Then
 \begin{eqnarray*}
&&\omega( A_{r/c,\Lambda}^+(Z_0,t_0),\partial\Omega\cap \mathcal{B}_{r}(Z_0,t_0))\geq c^{-1}.
\end{eqnarray*}
\end{lemma}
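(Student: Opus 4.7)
The plan is to exploit the complementary solution $v := 1 - u$, where $u(Z,t) := \omega(Z,t,\partial\Omega\cap\mathcal{B}_r(Z_0,t_0))$, and apply the iterated oscillation estimate of Lemma \ref{lem4.5-Kyoto1} to force $v$ to be strictly less than $1$ at the corkscrew point $A_{r/c,\Lambda}^+(Z_0,t_0)$. This is the usual ``Bourgain-type'' argument: once we control the hitting probability of the far boundary, we get a lower bound on the hitting probability of the local boundary patch.

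Concretely, first I would note that $v(Z,t) = \omega(Z,t,\partial\Omega\setminus\mathcal{B}_r(Z_0,t_0))$ is a non-negative weak solution of $\L v = 0$ in $\Omega$ with $0 \le v \le 1$, and that $v$ vanishes continuously on $\partial\Omega\cap\mathcal{B}_r(Z_0,t_0)$. Applying Lemma \ref{lem4.5-Kyoto1} at $(Z_0,t_0)$ with radius $r/2$ and, say, $\theta = 1/2$, gives a constant $c_1 = c_1(m,\kappa,M_1)$ such that
\begin{equation*}
\sup_{\Omega\cap\mathcal{B}_{r/(2c_1)}(Z_0,t_0)} v \;\leq\; \tfrac{1}{2}\sup_{\Omega\cap\mathcal{B}_{r}(Z_0,t_0)} v \;\leq\; \tfrac{1}{2},
\end{equation*}
so that $u \ge 1/2$ on $\Omega\cap\mathcal{B}_{r/(2c_1)}(Z_0,t_0)$.

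Second, I would verify that for $c$ sufficiently large (depending only on $m,\kappa,M_1$, via $\Lambda$ and $c_1$) the corkscrew point lies in this small ball. Using \eqref{e70} one computes
\begin{equation*}
(Z_0,t_0)^{-1}\circ A_{r/c,\Lambda}^+(Z_0,t_0) \;=\; A_{r/c,\Lambda}^+ \;=\; \bigl(0,\Lambda r/c,0,-\tfrac{2}{3}\Lambda(r/c)^3,(r/c)^2\bigr),
\end{equation*}
whose $\|\cdot\|$-norm \eqref{kolnormint} is bounded by $C\Lambda\, r/c$, so $d(A_{r/c,\Lambda}^+(Z_0,t_0),(Z_0,t_0)) \leq C\Lambda r/c$. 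Choosing $c \geq 4 c_1 C \Lambda$ puts the point inside $\mathcal{B}_{r/(2c_1)}(Z_0,t_0)$. Moreover, the corkscrew property $A_{r/c,\Lambda}^+(Z_0,t_0)\in\Omega$ with $\mathrm{dist}(A_{r/c,\Lambda}^+(Z_0,t_0),\partial\Omega)\approx r/c$ follows from the Lipschitz condition \eqref{Lip-++a1}: a direct computation shows that the projection of the corkscrew point to the $(x,y,t)$ variables differs from $(x_0,y_0,t_0)$ by a displacement of $\|\cdot\|$-norm exactly $r/c$, so $\psi$ shifts by at most $M_1 r/c$, while the $x_m$-coordinate shifts up by $\Lambda r/c$, giving the gap $(\Lambda-M_1)r/c$ once $\Lambda$ is chosen, as in Remark \ref{remnot}, larger than $M_1$ (the standard Lipschitz margin).

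Combining the two steps yields
\begin{equation*}
\omega(A_{r/c,\Lambda}^+(Z_0,t_0),\partial\Omega\cap\mathcal{B}_r(Z_0,t_0)) \;=\; u(A_{r/c,\Lambda}^+(Z_0,t_0)) \;\geq\; \tfrac{1}{2} \;\geq\; c^{-1},
\end{equation*}
after possibly enlarging $c$. There is no real obstacle here: the only mildly delicate point is the bookkeeping of constants, since $c$ appears both as the scaling parameter of the corkscrew and as the lower bound on $\omega$, but both requirements amount to choosing $c$ large in a way that depends only on $(m,\kappa,M_1)$, which is permissible.
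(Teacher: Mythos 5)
Your proposal is correct and is exactly the intended argument: the paper's proof consists of the single line ``This follows immediately from Lemma~\ref{lem4.5-Kyoto1},'' and you have supplied the standard Bourgain-type details — apply the oscillation decay of Lemma~\ref{lem4.5-Kyoto1} to the complementary solution $v=1-u$ (at radius $r/2$, so that the vanishing hypothesis on $\partial\Omega\cap\mathcal{B}_r$ is precisely what is needed), then check via the left-invariance of $d$ and the group law \eqref{e70} that the corkscrew point $A^+_{r/c,\Lambda}(Z_0,t_0)$ lies in $\Omega\cap\mathcal{B}_{r/(2c_1)}(Z_0,t_0)$ once $c$ is taken large; the computation $(\tilde z,\tilde t)^{-1}\circ(z,t)=(0,0,(r/c)^2)$ showing the $\psi$-shift is at most $M_1 r/c$ is the right one and explains why $\Lambda>M_1$ gives the corkscrew gap.
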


\begin{proof}
This follows immediately from Lemma \ref{lem4.5-Kyoto1}.
\end{proof}

Lemmas \ref{T:doubling}-\ref{lemmacruc-} below are Theorem 3.6, Lemma 12.2 and Lemma 12.3 in \cite{LN}, respectively.

\begin{lemma}\label{T:doubling} Let $\Omega\subset\mathbb R^{N+1}$ is an unbounded ($y_m$-independent) Lipschitz domain with constant $M_1$ in the sense of Definition \ref{car}. Let $\Lambda=\Lambda(m,M_1)$ be in accordance with Remark \ref{remnot}. Let $(Z_0,t_0)\in\partial\Omega$ and $r>0$. Then there exists  $c=c(m,\kappa,M_1)$,  $1\leq c<\infty$,  such that
 \begin{eqnarray*}
&&\omega( A_{r,\Lambda}^+(Z_0,t_0),\partial\Omega\cap \mathcal{B}_{2\tilde r}(\tilde Z_0,\tilde t_0))\leq c
\omega( A_{r,\Lambda}^+(Z_0,t_0),\partial\Omega\cap \mathcal{B}_{\tilde r}(\tilde Z_0,\tilde t_0))
\end{eqnarray*}
whenever $(\tilde Z_0,\tilde t_0)\in\partial\Omega$, $\mathcal{B}_{\tilde r}(\tilde Z_0,\tilde t_0)\subset \mathcal{B}_{r/c}(Z_0,t_0)$.
\end{lemma}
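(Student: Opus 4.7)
The plan is to establish the doubling inequality by combining the Bourgain-type lower bound at the inner scale $\tilde r$ with a maximum-principle comparison and a Harnack-chain transfer to the distant corkscrew $A_{r,\Lambda}^+(Z_0,t_0)$. Set
\[
u(Z,t) := \omega(Z,t,\partial\Omega\cap\mathcal{B}_{2\tilde r}(\tilde Z_0,\tilde t_0)), \qquad v(Z,t) := \omega(Z,t,\partial\Omega\cap\mathcal{B}_{\tilde r}(\tilde Z_0,\tilde t_0)).
\]
Both are non-negative weak solutions of $\L w=0$ in $\Omega$ with $0\le v\le u\le 1$, and the target is the pointwise inequality $u(A_{r,\Lambda}^+(Z_0,t_0))\le c\, v(A_{r,\Lambda}^+(Z_0,t_0))$.

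First I would apply Lemma \ref{bourg} at the inner scale: at the point $A_0 := A_{\tilde r/c,\Lambda}^+(\tilde Z_0,\tilde t_0)$ one has $v(A_0)\ge c^{-1}$, while trivially $u(A_0)\le 1$, so $u(A_0)\le c\,v(A_0)$ at this single interior point. Next I would decompose the annular surface set $\partial\Omega\cap(\mathcal{B}_{2\tilde r}\setminus \mathcal{B}_{\tilde r})$ into a uniformly bounded (depending only on $m$ and $M_1$) collection of surface balls $\Delta_j := \partial\Omega\cap\mathcal{B}_{\tilde r/\tilde c}(P_j)$, $j=1,\ldots,K$, centered at boundary points $P_j$. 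The maximum principle applied to $u-v\ge 0$ yields
\[
(u-v)(Z,t)\le \sum_{j=1}^K \omega(Z,t,\Delta_j),
\]
so the problem reduces to showing $\omega(A_{r,\Lambda}^+(Z_0,t_0),\Delta_j)\le c\,v(A_{r,\Lambda}^+(Z_0,t_0))$ for each $j$.

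For each such $j$, Bourgain again gives $\omega(A_j^+,\Delta_j)\ge c^{-1}$ at the forward corkscrew $A_j^+$ of $P_j$ at scale $\tilde r/\tilde c$; since $v(A_j^+)$ is also bounded below by Lemma \ref{bourg} (up to an application of the interior Harnack inequality of \cite{Ietal} that links $A_j^+$ to $A_0$), the ratio $\omega(\cdot,\Delta_j)/v(\cdot)$ is bounded at $A_j^+$. I would then transport this ratio bound along a Harnack chain of corkscrew points joining $A_j^+$ first to $A_0$ and then to $A_{r,\Lambda}^+(Z_0,t_0)$, invoking the interior Harnack inequality of \cite{Ietal} together with the growth and decay estimate \eqref{coneset-lem39} of Lemma \ref{lem4.7bol} to control the losses at each link; the Lemma \ref{lem4.5-Kyoto1} oscillation decay is used on the annular piece $\partial\Omega\cap(\mathcal{B}_{2\tilde r}\setminus\mathcal{B}_{\tilde r})$ to guarantee that $u-v$ is not too large at $A_{r,\Lambda}^+(Z_0,t_0)$. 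Summing over $j$ yields the desired doubling.

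The main obstacle will be the time-asymmetry of the parabolic setting combined with the anisotropic dilation structure of the Kolmogorov group: the forward corkscrew $A_{\rho,\Lambda}^+$ lies at time offset $+\rho^2$ from its base point on $\partial\Omega$, so a symmetric Harnack comparison is not available, and one has to carefully interleave the forward decay bound $u(A_{\tilde\rho,\Lambda}^+)\le c(\rho/\tilde\rho)^\gamma u(A_{\rho,\Lambda}^+)$ with the backward growth bound $u(A_{\tilde\rho,\Lambda}^-)\ge c^{-1}(\tilde\rho/\rho)^\gamma u(A_{\rho,\Lambda}^-)$ along a chain that respects the non-Euclidean Lie group law $\circ$ and the scaling $\delta_r$. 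Ensuring that this chain has length bounded solely in terms of $m$, $\kappa$, and $M_1$ (independent of $r/\tilde r$, after the initial application of \eqref{coneset-lem39}) is the delicate point that the proof in \cite{LN} handles, and that any self-contained argument would need to reproduce.
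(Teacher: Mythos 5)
The paper does not give a proof of Lemma \ref{T:doubling}: it is quoted verbatim from the companion paper \cite{LN}, where it appears as Theorem 3.6. So there is no internal proof to compare your proposal against. I will therefore assess the proposal on its own merits.

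Your plan has a genuine and central gap, and it is not the minor ``delicate point'' you flag at the end---it is in fact the whole content of the lemma. The interior Harnack inequality from \cite{Ietal} is one-sided in time (it bounds values at earlier times by values at later times along admissible chains), and the cone estimates \eqref{coneset-lem39} in Lemma \ref{lem4.7bol} only relate the \emph{same} (forward, respectively backward) corkscrew at different scales; neither gives any comparison between the forward corkscrew $A^+_{\rho,\Lambda}$ and the backward corkscrew $A^-_{\rho,\Lambda}$. That comparison---the so-called \emph{backward Harnack inequality} for nonnegative solutions vanishing on a boundary patch---is precisely what makes doubling of caloric/parabolic measure a theorem rather than a tautology (cf.\ \cite{FGS}, \cite{FS}, \cite{FSY}, \cite{SY}). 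Your Harnack chain from $A_j^+$ to $A_0$ to $A^+_{r,\Lambda}(Z_0,t_0)$ cannot close: the chain must always move backward in time, whereas the forward corkscrew of $(\tilde Z_0,\tilde t_0)$ sits at time $\tilde t_0+(\tilde r/c)^2$, which may be \emph{later} than the times of the $A_j^+$ of the annular cover and bears no controllable relation to $t_0+r^2$. Moreover, the step ``transport this ratio bound along a Harnack chain'' conflates the interior Harnack inequality (which controls a single nonnegative solution up to a multiplicative chain-length loss) with the boundary comparison principle (which controls the ratio of two solutions vanishing on the same boundary patch, uniformly in the chain); only the latter could do what you want, and that is the content of Lemma \ref{lemmacruc-} (Lemma 12.3 in \cite{LN}), which itself rests on the doubling property and the backward Harnack machinery from \cite{LN}. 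So the argument as written is circular or, at best, incomplete: without an independent proof of a backward Harnack/forward--backward corkscrew comparison adapted to the Kolmogorov group structure (and this is exactly where the $y_m$-independence assumption enters in \cite{LN}), the Bourgain lower bound, the annular covering and the maximum principle do not suffice to produce the doubling constant.

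A secondary, smaller issue: you claim the annular surface set is covered by ``a uniformly bounded'' number of surface balls depending only on $m$ and $M_1$; that is fine once $\tilde c$ is fixed, but the number grows like $\tilde c^{\mathbf q-1}$ and therefore must be tied to the final choice of constants, which in turn depends on the missing backward Harnack step. This is bookkeeping, not a conceptual error, but it signals that the constants in the sketch are not actually tracked.
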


\begin{lemma}\label{lem4.5-Kyoto1ha}Let $\Omega\subset\mathbb R^{N+1}$ is an unbounded ($y_m$-independent) Lipschitz domain with constant $M_1$ in the sense of Definition \ref{car}. Let $\Lambda=\Lambda(m,M_1)$ be in accordance with Remark \ref{remnot}. Let $(Z_0,t_0)\in\partial\Omega$ and $r>0$. Let $(\tilde Z_0,\tilde t_0)\in\partial\Omega$ and $\tilde r>0$ be such that $\mathcal{B}_{\tilde r}(\tilde Z_0,\tilde t_0)\subset \mathcal{B}_{r}(Z_0,t_0)$.  Then there exists  $c=c(m,\kappa,M_1)$,  $1\leq c<\infty$, such that
 \begin{eqnarray}\label{ad}
K(A_{c\tilde r,\Lambda}^+(\tilde Z_0,\tilde t_0),\bar Z,\bar t):=\lim_{\bar r\to 0}\frac{\omega(A_{c\tilde r,\Lambda}^+(\tilde Z_0,\tilde t_0),\partial\Omega\cap\mathcal{B}_{\bar r}(\bar Z,\bar t))}{\omega(A_{cr,\Lambda}^+(Z_0,t_0),\partial\Omega\cap\mathcal{B}_{\bar r}(\bar Z,\bar t))}
\end{eqnarray}
exists for $\omega(A_{cr,\Lambda}^+(Z_0,t_0),\cdot)$-a.e. $(\bar Z,\bar t)\in \partial\Omega\cap\mathcal{B}_{\tilde r}(\tilde Z_0,\tilde t_0)$, and
\begin{eqnarray}\label{ad1}
c^{-1}\leq {\omega(A_{cr,\Lambda}^+(Z_0,t_0), \partial\Omega\cap\mathcal{B}_{\tilde r}(\tilde Z_0,\tilde t_0))}K(A_{c\tilde r,\Lambda}^+(\tilde Z_0,\tilde t_0),\bar Z,\bar t)\leq c
\end{eqnarray}
whenever $(\bar Z,\bar t)\in \partial\Omega\cap\mathcal{B}_{\tilde r}(\tilde Z_0,\tilde t_0)$.
\end{lemma}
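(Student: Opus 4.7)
The plan is to combine the doubling property (Lemma \ref{T:doubling}), the Bourgain-type lower bound (Lemma \ref{bourg}), and a change-of-pole formula, with the Lebesgue differentiation theorem on the space of homogeneous type $(\partial\Omega,d,d\omega)$. Throughout write $(Z_1,t_1):=A_{cr,\Lambda}^+(Z_0,t_0)$, $(\tilde Z_1,\tilde t_1):=A_{c\tilde r,\Lambda}^+(\tilde Z_0,\tilde t_0)$, $\omega_1:=\omega((Z_1,t_1),\cdot)$, and $\tilde\omega_1:=\omega((\tilde Z_1,\tilde t_1),\cdot)$.

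The first and main step is to establish a change-of-pole formula: for any boundary point $(\bar Z,\bar t)\in\partial\Omega\cap\mathcal{B}_{\tilde r/2}(\tilde Z_0,\tilde t_0)$ and any $\rho\in (0,\tilde r/c')$ with $c'$ sufficiently large,
\begin{equation*}
\frac{\tilde\omega_1(\partial\Omega\cap\mathcal{B}_\rho(\bar Z,\bar t))}{\omega_1(\partial\Omega\cap\mathcal{B}_\rho(\bar Z,\bar t))}\approx \frac{\tilde\omega_1(\partial\Omega\cap\mathcal{B}_{\tilde r}(\tilde Z_0,\tilde t_0))}{\omega_1(\partial\Omega\cap\mathcal{B}_{\tilde r}(\tilde Z_0,\tilde t_0))}.
\end{equation*}
This is a boundary Harnack/comparison statement: the two non-negative solutions $u(Z,t):=\omega((Z,t),\partial\Omega\cap\mathcal{B}_\rho(\bar Z,\bar t))$ and $v(Z,t):=\omega((Z,t),\partial\Omega\cap\mathcal{B}_{\tilde r}(\tilde Z_0,\tilde t_0))$ vanish continuously on $\partial\Omega$ away from their respective surface balls, and we need to compare $u/v$ at the two poles $(Z_1,t_1)$ and $(\tilde Z_1,\tilde t_1)$. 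The ratio comparison is obtained by iterating Lemma \ref{lem4.5-Kyoto1} to get geometric decay of the oscillation of $u/v$ on shrinking balls near $(\bar Z,\bar t)$, together with the growth estimates of Lemma \ref{lem4.7bol}, which allow one to move poles along forward-in-time and backward-in-time Harnack chains connecting $(\tilde Z_1,\tilde t_1)$ and $(Z_1,t_1)$ within $\Omega$. The special choice $\Lambda=\Lambda(m,M_1)$ and the twofold reference points $A^{\pm}$ are precisely tuned so that such chains exist despite the degenerate, non-symmetric nature of $\L$.

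Once the change-of-pole formula is in hand, Lemma \ref{bourg} gives $\tilde\omega_1(\partial\Omega\cap\mathcal{B}_{\tilde r}(\tilde Z_0,\tilde t_0))\approx 1$, while the doubling property of $\omega_1$ from Lemma \ref{T:doubling} (applied on scales between $\rho$ and $\tilde r$) yields
\begin{equation*}
\frac{\tilde\omega_1(\partial\Omega\cap\mathcal{B}_\rho(\bar Z,\bar t))}{\omega_1(\partial\Omega\cap\mathcal{B}_\rho(\bar Z,\bar t))}\approx \frac{1}{\omega_1(\partial\Omega\cap\mathcal{B}_{\tilde r}(\tilde Z_0,\tilde t_0))},
\end{equation*}
with constants depending only on $m,\kappa,M_1$. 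In particular $\tilde\omega_1\ll\omega_1$ on $\partial\Omega\cap\mathcal{B}_{\tilde r/2}(\tilde Z_0,\tilde t_0)$, and the Radon--Nikodym derivative, evaluated at any $\omega_1$-Lebesgue point $(\bar Z,\bar t)$ via the Lebesgue differentiation theorem on the doubling quasi-metric measure space $(\partial\Omega,d,\omega_1)$, coincides with $\lim_{\rho\to 0}\tilde\omega_1(\mathcal{B}_\rho)/\omega_1(\mathcal{B}_\rho)$. This yields both the existence of $K(A_{c\tilde r,\Lambda}^+(\tilde Z_0,\tilde t_0),\cdot)$ $\omega_1$-a.e.\ in $\partial\Omega\cap\mathcal{B}_{\tilde r/2}(\tilde Z_0,\tilde t_0)$ and the two-sided bound \eqref{ad1}. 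A finite covering of $\mathcal{B}_{\tilde r}(\tilde Z_0,\tilde t_0)$ by balls $\mathcal{B}_{\tilde r/(2c')}$ centered at points of $\partial\Omega\cap\mathcal{B}_{\tilde r}(\tilde Z_0,\tilde t_0)$, combined with doubling to absorb the overlap factors, then extends the conclusion to the full ball.

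The genuine obstacle is the change-of-pole formula; the rest is essentially soft. In the classical uniformly parabolic Lipschitz setting this is the content of \cite{FGS}, \cite{N}, and it rests on backward-in-time reference points. In our degenerate, non-Euclidean setting the geometry underlying those arguments is replaced by the group law \eqref{e70} and the dilations \eqref{dil.alpha.i}, and the usual interior Harnack inequality is replaced by the forward/backward growth estimates of Lemma \ref{lem4.7bol}. Provided the boundary Harnack principle in this form is available from the machinery developed in \cite{LN} (as the companion Lemma 12.2 = Lemma \ref{T:doubling} suggests), the argument runs as in the parabolic case; if not, the bulk of the work goes into proving it by the standard scheme of iterating Lemma \ref{lem4.5-Kyoto1} for the quotient $u/v$ on Carleson boxes and chaining through the points $A_{Q,\Lambda}^{\pm}$ to transfer the estimate between the two poles.
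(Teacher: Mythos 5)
This lemma is imported verbatim from \cite{LN} (it is Lemma 12.2 there), so the present paper contains no proof to compare against; your proposal has to be judged on its own merits. Your outline is sound and uses exactly the tools one would expect: a change-of-pole/boundary-Harnack estimate, the Bourgain-type lower bound (Lemma~\ref{bourg}), doubling (Lemma~\ref{T:doubling}), and Lebesgue differentiation on the doubling quasi-metric space $(\partial\Omega,d,\omega_1)$. You correctly isolate the change-of-pole comparison as the genuine obstacle, and you are honest that its proof requires more than stringing together Lemmas~\ref{lem4.7bol} and~\ref{lem4.5-Kyoto1}: getting geometric decay of $\osc(u/v)$ is the content of a backward-Harnack/quotient theorem which in this degenerate Kolmogorov setting is a serious piece of machinery (cf.\ \cite{NP} and the development in \cite{LN}).

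Two remarks. First, a small bookkeeping slip: Lemma~\ref{T:doubling} is Theorem~3.6 of \cite{LN}, not Lemma~12.2; Lemma~12.2 is precisely the statement you are proving, and the ``companion'' you want to cite as evidence the boundary Harnack is available is rather Lemma~12.3, which the present paper records as Lemma~\ref{lemmacruc-}. Second, once you have the change-of-pole estimate in the form of Lemma~\ref{lemmacruc-} --- namely $\tilde c^{-1}\tilde\omega_1(E)\le \omega_1(E)/\omega_1(\partial\Omega\cap\mathcal{B}_{\tilde r})\le \tilde c\,\tilde\omega_1(E)$ for all Borel $E\subset\mathcal{B}_{\tilde r}(\tilde Z_0,\tilde t_0)$ --- the present lemma drops out in two lines: taking $E=\partial\Omega\cap\mathcal{B}_{\bar r}(\bar Z,\bar t)$ and dividing gives
\begin{equation*}
\tilde c^{-1}\le \omega_1\bigl(\partial\Omega\cap\mathcal{B}_{\tilde r}(\tilde Z_0,\tilde t_0)\bigr)\,\frac{\tilde\omega_1\bigl(\partial\Omega\cap\mathcal{B}_{\bar r}(\bar Z,\bar t)\bigr)}{\omega_1\bigl(\partial\Omega\cap\mathcal{B}_{\bar r}(\bar Z,\bar t)\bigr)}\le \tilde c,
\end{equation*}
and mutual absolute continuity plus doubling and Lebesgue differentiation supply existence of the limit $\omega_1$-a.e. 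So the statements of Lemmas~\ref{lem4.5-Kyoto1ha} and~\ref{lemmacruc-} are essentially interchangeable, and the one nontrivial input in either direction is the boundary-Harnack comparison you identified; your proposal is the correct strategy for establishing that input from the interior growth and boundary decay estimates, but as written it leaves the crucial quotient-oscillation-decay step at the level of a sketch rather than a proof.
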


\begin{lemma}\label{lemmacruc-} Let $\Omega\subset\mathbb R^{N+1}$ is an unbounded ($y_m$-independent) Lipschitz domain with constant $M_1$ in the sense of Definition \ref{car}. Let $\Lambda=\Lambda(m,M_1)$ be in accordance with Remark \ref{remnot}. Let $(Z_0,t_0)\in\partial\Omega$ and $r>0$. Let $(\tilde Z_0,\tilde t_0)\in\partial\Omega$ and $\tilde r>0$ be such that $\mathcal{B}_{\tilde r}(\tilde Z_0,\tilde t_0)\subset \mathcal{B}_{r}(Z_0,t_0)$.  Then there exist  $c=c(m,\kappa,M_1)$,  $1\leq c<\infty$, and $\tilde c=\tilde c(m,\kappa,M_1)$,  $1\leq \tilde c<\infty$, such that
 \begin{eqnarray*}
 \quad \tilde c^{-1}\omega(A_{c\tilde r,\Lambda}^+(\tilde Z_0,\tilde t_0),E)\leq \frac {\omega(A_{cr,\Lambda}^+(Z_0,t_0),E)}{\omega(A_{cr,\Lambda}^+(Z_0,t_0),\partial\Omega\cap \mathcal{B}_{\tilde r}(\tilde Z_0,\tilde t_0))}\leq \tilde c \omega(A_{c\tilde r,\Lambda}^+(\tilde Z_0,\tilde t_0),E),
\end{eqnarray*}
whenever $E\subset \mathcal{B}_{\tilde r}(\tilde Z_0,\tilde t_0)$.
\end{lemma}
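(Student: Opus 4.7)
The plan is to deduce the lemma from Lemma \ref{lem4.5-Kyoto1ha} by integration. For brevity, write $\omega_1(\cdot):=\omega(A_{cr,\Lambda}^+(Z_0,t_0),\cdot)$ and $\omega_2(\cdot):=\omega(A_{c\tilde r,\Lambda}^+(\tilde Z_0,\tilde t_0),\cdot)$, and denote by $K(\bar Z,\bar t)$ the kernel function $K(A_{c\tilde r,\Lambda}^+(\tilde Z_0,\tilde t_0),\bar Z,\bar t)$ supplied by Lemma \ref{lem4.5-Kyoto1ha}. That lemma asserts that this limit of ratios of balls exists $\omega_1$-almost everywhere on $\partial\Omega\cap\mathcal{B}_{\tilde r}(\tilde Z_0,\tilde t_0)$ and satisfies the two-sided bound
$$c^{-1}\leq \omega_1\bigl(\partial\Omega\cap\mathcal{B}_{\tilde r}(\tilde Z_0,\tilde t_0)\bigr)\,K(\bar Z,\bar t)\leq c.$$

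The first step is to upgrade the existence of the limit $K$ to an actual Radon-Nikodym representation $d\omega_2/d\omega_1=K$ on the Borel subsets of $\partial\Omega\cap\mathcal{B}_{\tilde r}(\tilde Z_0,\tilde t_0)$. Because $\omega_1$ is a doubling measure on $\partial\Omega$ (Lemma \ref{T:doubling}), the restriction of $(\partial\Omega,d,d\omega_1)$ to a neighbourhood of $(\tilde Z_0,\tilde t_0)$ is a space of homogeneous type, and Lebesgue's differentiation theorem holds for $\omega_1$. Combined with the doubling property of $\omega_2$ in the same ball and the existence of the limit defining $K$, one concludes, via a Vitali-type covering argument, that every Borel set $E\subset\partial\Omega\cap\mathcal{B}_{\tilde r}(\tilde Z_0,\tilde t_0)$ satisfies
$$\omega_2(E)=\int_E K(\bar Z,\bar t)\,d\omega_1(\bar Z,\bar t).$$
In particular $\omega_2\ll\omega_1$ on $\partial\Omega\cap\mathcal{B}_{\tilde r}(\tilde Z_0,\tilde t_0)$ and $K$ is the associated density.

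Once this representation is in hand, the conclusion is immediate: integrating the two-sided pointwise bound on $K$ over $E\subset\partial\Omega\cap\mathcal{B}_{\tilde r}(\tilde Z_0,\tilde t_0)$ yields
$$\frac{c^{-1}\,\omega_1(E)}{\omega_1\bigl(\partial\Omega\cap\mathcal{B}_{\tilde r}(\tilde Z_0,\tilde t_0)\bigr)}\;\leq\;\omega_2(E)\;\leq\;\frac{c\,\omega_1(E)}{\omega_1\bigl(\partial\Omega\cap\mathcal{B}_{\tilde r}(\tilde Z_0,\tilde t_0)\bigr)},$$
which is precisely the claimed inequality with $\tilde c=c$.

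The only real obstacle is the passage from the pointwise limit of ratios in Lemma \ref{lem4.5-Kyoto1ha} to the integral identity $\omega_2(E)=\int_E K\,d\omega_1$. Here one leans on the doubling property of both parabolic measures together with standard homogeneous-type covering/differentiation results; the two-sided bound on $K$ then propagates to an $A_\infty$-type change-of-pole statement. Everything else is routine integration against a measurable density.
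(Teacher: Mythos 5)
The paper itself does not prove this lemma: it is cited verbatim as Lemma~12.3 of~\cite{LN}, so there is no in-text argument to compare against. Assessing your proposed proof on its own terms, there is a genuine gap at the crucial step, namely the passage from the $\omega_1$-a.e.\ existence and two-sided bound on $K$ to the integral identity $\omega_2(E)=\int_E K\,\d\omega_1$. Write $\omega_1(\cdot)=\omega(A_{cr,\Lambda}^+(Z_0,t_0),\cdot)$, $\omega_2(\cdot)=\omega(A_{c\tilde r,\Lambda}^+(\tilde Z_0,\tilde t_0),\cdot)$ and $\Delta_{\tilde r}=\partial\Omega\cap\mathcal{B}_{\tilde r}(\tilde Z_0,\tilde t_0)$. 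In a doubling metric measure space, differentiation theory gives only the Lebesgue--Radon--Nikodym decomposition
\begin{equation*}
\omega_2 \,=\, K\,\d\omega_1 + \omega_2^{\,s}, \qquad \omega_2^{\,s}\perp\omega_1,
\end{equation*}
where $K=\lim_{\rho\to 0}\omega_2(\mathcal{B}_\rho)/\omega_1(\mathcal{B}_\rho)$ is the density of the \emph{absolutely continuous part}. The limit $K$ is automatically finite $\omega_1$-a.e.\ regardless of whether $\omega_2^{\,s}$ vanishes, because $\omega_2^{\,s}$ is carried by an $\omega_1$-null set, which the $\omega_1$-a.e.\ statement of Lemma~\ref{lem4.5-Kyoto1ha} never sees. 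The Vitali/Besicovitch covering mechanism and the doubling of $\omega_2$ that you invoke recover $\omega_2^{\,\mathrm{ac}}$ only; they do not show $\omega_2^{\,s}=0$. Note the two halves of the claimed inequality are not symmetric with respect to this issue: the inequality $\omega_1(E)/\omega_1(\Delta_{\tilde r})\le \tilde c\,\omega_2(E)$ is safe, since integrating the lower bound on $K$ over $E$ gives $c^{-1}\omega_1(E)/\omega_1(\Delta_{\tilde r})\le \int_E K\,\d\omega_1=\omega_2^{\,\mathrm{ac}}(E)\le\omega_2(E)$; but the other inequality, $\omega_2(E)\le\tilde c\,\omega_1(E)/\omega_1(\Delta_{\tilde r})$, requires exactly the missing fact $\omega_2\ll\omega_1$, because $\omega_2(E)=\omega_2^{\,\mathrm{ac}}(E)+\omega_2^{\,s}(E)$ and Lemma~\ref{lem4.5-Kyoto1ha} as stated gives no control over $\omega_2^{\,s}(E)$.

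Mutual absolute continuity of parabolic measures at two interior poles is a substantive fact of the theory (typically established through the backward Harnack inequality and the boundary comparison principle, as in \cite{LN}), not a consequence of the limit statement in Lemma~\ref{lem4.5-Kyoto1ha}. The logically robust route is the reverse of what you attempted: first prove the change-of-pole estimate directly for surface balls $\Delta\subset\Delta_{\tilde r}$ using Lemma~\ref{lem4.7bol}, Lemma~\ref{T:doubling}, and the boundary comparison principle of \cite{LN}, obtaining $\omega_1(\Delta)\approx\omega_1(\Delta_{\tilde r})\,\omega_2(\Delta)$ quantitatively for all such $\Delta$; then extend to Borel $E$ by outer regularity and a covering argument; and finally derive Lemma~\ref{lem4.5-Kyoto1ha} as a corollary by letting the surface ball shrink. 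Either that, or explicitly establish $\omega_2\ll\omega_1$ first and then your integration step becomes valid.
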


\subsection{Proof of Lemma \ref{existcover}} Let in the following  $Q_0\in \mathbb{D}$, $\epsilon_0\in (0,1)$, and let $\omega(\cdot)$ be as in the statement of Theorem \ref{Ainfty}. Let $k\in \mathbb Z_+$ be given. Let $\gamma$, $0<\gamma\ll 1$, and $\Upsilon$, $1\ll\Upsilon$,  be degrees of freedom to be chosen depending only on $m$, $\kappa$ and $M_1$. Let $\delta_0=\gamma(\epsilon_0/\Upsilon)^k$. Suppose that $\omega(E)\leq\delta_0$. Using that $\omega$ is a regular Borel measure, we see that there exists a (relatively) open subset of ${Q}_0$, containing $E$, which we denote by $\mathcal{O}_{k+1}$, satisfying
$\omega(\mathcal{O}_{k+1})\leq 2\omega(E)$. Using Lemma \ref{bourg} and the Harnack inequality, see Lemma \ref{lem4.7bol}, we see that there exists $c=c(m,\kappa,M_1)$,  $1\leq c<\infty$, such that
\begin{eqnarray}\label{yy1}
\omega(\mathcal{O}_{k+1})\leq 2\delta_0\leq c\delta_0\omega({Q}_0)\leq \frac 1 2 \biggl (\frac {\epsilon_0}{\Upsilon}\biggr )^k\omega({Q}_0)
\end{eqnarray}
if we let $\gamma:=1/(2c)$.  Let $f\in L^1_{\mbox{loc}}(\Sigma,\d\omega)$, and let
$$M_{\omega}(f)(Z,t):=\sup_{\{\mathcal{B}_r(\tilde Z,\tilde t):\ (\tilde Z,\tilde t)\in\Sigma,\  (Z,t)\in\mathcal{B}_r(\tilde Z,\tilde t)\}}\frac 1{\omega(\mathcal{B}_r(\tilde Z,\tilde t))}\iint_{\mathcal{B}_r(\tilde Z,\tilde t)} |f|\, \d\omega,$$
denote the Hardy-Littlewood maximal function of $f$, with respect to $\omega$, and
where the supremum is taken over all  balls $\mathcal{B}_r(\tilde Z,\tilde t)$, $(\tilde Z,\tilde t)\in\Sigma$, containing $(Z,t)$. Set
\begin{equation}\label{lem:coverexists_Okdef}
    \mathcal{O}_k:=\{(Z,t)\in {Q}_0\mid M_{\omega}(\chi_{\mathcal{O}_{k+1}})\geq {\epsilon_0}/{\bar c}\},
\end{equation}
where $\chi_{\mathcal{O}_{k+1}}$ denotes the indicator function for the set $\mathcal{O}_{k+1}$, and where we let $\bar c=\bar c(m,\kappa,M_1)$,  $1\leq \bar c<\infty$, denote the constant appearing in Lemma \ref{T:doubling}. Then, by construction,
$\mathcal{O}_{k+1}\subset\mathcal{O}_k$, $\mathcal{O}_k$ is  relatively open in
${Q}_0$ and $\mathcal{O}_k$ is properly contained in ${Q}_0$. As $\omega$ is doubling, see Lemma \ref{T:doubling},  we have that $(2Q_0,d,\omega)$ is a space of homogeneous type and weak $L^1$ estimates for the Hardy-Littlewood maximal function apply. Hence
\begin{eqnarray}\label{yy2}\omega(\mathcal{O}_k)\leq \tilde c\frac {\bar c}{\epsilon_0}\omega(\mathcal{O}_{k+1})\leq \frac 1 2 \biggl (\frac {\epsilon_0}{\bar c}\biggr )^{k-1}\omega({Q}_0),
\end{eqnarray}
if we let $\Upsilon:=\tilde c\bar c$ and where $\tilde c=\tilde c(m,\kappa,M_1)$,  $1\leq \tilde c<\infty$. By definition and by the construction, see \eqref{cubes} $(i)$-$(iii)$,  ${Q}_0$ can be dyadically subdivided, and we can select a collection
$\F_k=\{\Delta_i^k\}_i\subset {{Q}_0}$, comprised
of the cubes that are maximal with respect to containment in $\mathcal{O}_k$, and thus $\mathcal{O}_k := \cup_i \Delta^k_i$. The cubes in $\F_k$ are maximal in the sense that
\begin{equation}\label{Fkmaximal}
 \Delta_i^k \in \F_k \:\iff\:\Delta_i^k\subset\mathcal{O}_k \:\text{and}\: Q \subset \Delta_i^k, \:\forall Q\in\mathbb{D}_{Q_0}\:\text{such that}\:Q\subset\mathcal{O}_k.
\end{equation}
Using \eqref{Fkmaximal}, \eqref{lem:coverexists_Okdef}, and Lemma \ref{T:doubling}, we see that
\begin{equation}\label{yy3}
\begin{split}
        \omega(\mathcal{O}_{k+1}\cap\Delta_i^k) &\leq \omega(\mathcal{O}_{k+1}\cap 2\Delta_i^k)\\
        &\leq \omega(2\Delta_i^k)\frac{1}{\omega(2\Delta_i^k)}\iint_{2\Delta_i^k}\chi_{\mathcal{O}_{k+1}}\d\omega\\&\leq \epsilon_0\omega(\Delta_i^k),
\end{split}
\end{equation}
for all $\Delta_i^k\in\F_k$. We now iterate this argument, to construct $\mathcal{O}_{j-1}$ from $\mathcal{O}_j$, for $2\leq j\leq k$, just as we constructed $\mathcal{O}_k$ from $\mathcal{O}_{k+1}$. It is then a routine matter to verify that the sets $\mathcal{O}_1$,...., $\mathcal{O}_k$, form a good $\epsilon_0$ cover of $E$. We omit further details.
\qed

\subsection{Additional notation}

\begin{remark}\label{gc} In the following we let $\Pi(Z,t)$ denote the projection of $(Z,t)\in\mathbb R^{N+1}$ along $x_m$ onto $\partial\Omega$. Furthermore, from now on we fix two small dyadic numbers $\eta_1=2^{-k_1}$ and $\eta_2=2^{-k_2}$ where $1\leq k_1\ll k_2$ are to be chosen depending at most on $m$, $\kappa$ and $M_1$. Given $Q\in \mathbb D$, we let $A_{\eta_1 Q}^+:=A_{c\eta_1 l(Q),\Lambda}^+(Z_Q,t_Q)$, we consider the point
$A_{c\eta_1^2 l(Q),\Lambda}^-(Z_Q,t_Q)$ and we let $\tilde Q\in \mathbb D$ be such that $l(\tilde Q)=\eta_1^2 l(Q)$ and such that $\tilde Q$ contains the point $\Pi(A_{c\eta_1^2 l(Q),\Lambda}^-(Z_Q,t_Q))$. We can and will choose $\eta_1$ so small that
$$\Pi(A_{\eta_1 Q}^+)\subset \frac 1 4 Q$$
and such that
$$\tilde Q\subset Q.$$
\end{remark}

\begin{remark}\label{gc+}  Given $Q\in \mathbb D$ and $A_{\eta_1 Q}^+=A_{c\eta_1l(Q),\Lambda}^+(Z_Q,t_Q)$ as in Remark \ref{gc}, consider the point $\Pi(A_{\eta_1 Q}^+)\in\partial\Omega$. We let
$\hat Q\in \mathbb D$ be such that $l(\hat Q)=\eta_2 l(Q)$ and such that $\hat Q$ contains the point $\Pi(A_{\eta_1 Q}^+)$. We let $d_Q:=|A_{\eta_1 Q}^+-\Pi(A_{\eta_1 Q}^+)|$. Furthermore, we let
$\bar Q\in \mathbb D$ be such that $l(\bar Q)=\eta_2^2 l(\hat Q)$ and $\bar Q$ contains the point $(Z_{\hat Q},t_{\hat Q})$. Note that by construction, if we choose $\eta_1$  (and hence $\eta_2$) small enough,
$$\bar Q\subset\hat Q\subset Q.$$
Given
$\bar Q$ we let
\begin{equation}\label{yy4-}
\begin{split}
&S_{\bar Q}^+:=\{(x,\psi(x,y,t)+d_Q,y,y_m,t)\mid\ (x,\psi(x,y,t),y,y_m,t)\in \bar Q\},\\
&S_{\bar Q}^-:=\{(x,\psi(x,y,t)+l(\bar Q),y,y_m,t)\mid\ (x,\psi(x,y,t),y,y_m,t)\in \bar Q\}.
\end{split}
\end{equation}
Then $S_{\bar Q}^-$ and $S_{\bar Q}^+$ are two pieces of surfaces above $\partial\Omega$  in the direction of $x_m$, $S_{\bar Q}^+$ is above $S_{\bar Q}^-$, and each point on $S_{\bar Q}^-$ can be connected to a point on $S_{\bar Q}^+$ by a straight line in the direction of $x_m$.
\end{remark}

\begin{remark} Note that Remark \ref{gc} and Remark \ref{gc+} are generic constructions for dyadic cubes. Consider now the special case  $\Delta := \Delta_i^l\in\F_l$, i.e. $\Delta$ is a cube arising in some good $\epsilon_0$ cover. We then set $\tilde \Delta_i^l:=\tilde \Delta$, where $\tilde \Delta$ is defined as in Remark \ref{gc}, and we define
\begin{eqnarray}\label{yy4}\tilde \O_l:=\bigcup_{\Delta_i^l\in\F_l}\tilde \Delta_i^l.
\end{eqnarray}
Furthermore, let $E\subset {Q}_0$ and consider the set up of Lemma \ref{existcover}. We note that for every $(Z_0,t_0)\in E$ we have $(Z_0,t_0)\in \mathcal{O}_l$, for all $l=1,2,...,k$,
and that therefore there exists, for each $l$, a cube $\Delta_i^l=\Delta_i^l(Z_0,t_0)\in\mathcal{F}_l$ containing $(Z_0,t_0)$.
\end{remark}


\subsection{Proof of Lemma \ref{lemmacruc}} Let in the following  $Q_0\in \mathbb{D}$ and let $\omega(\cdot)$ be as in the statement of Theorem \ref{Ainfty}. To prove Lemma \ref{lemmacruc}, let $\epsilon_0>0$ be a  degree of freedom to be specified below and depending only on $m,\kappa,M_1$, let
 $\delta_0=\gamma(\epsilon_0/\Upsilon)^k$ be as specified in Lemma \ref{existcover} where $k$ is to be chosen depending only on $m,\kappa,M_1$ and $\Upsilon$. Consider $E\subset {Q_0}$ with $\omega(E)\leq\delta_0$.  Using Lemma \ref{existcover} we see that  $E$ has a good $\epsilon_0$ cover of length $k$, $\{\mathcal{O}_l\}_{l=1}^k$ with corresponding collections
$\F_l=\{\Delta_i^l\}_i\subset Q_0$. Let $\{\tilde{\mathcal{O}}_l\}_{l=1}^k$ be defined as in \eqref{yy4}. Using this good $\epsilon_0$ cover of $E$ we let
$$F(Z,t):=\sum_{j=2}^k \chi_{\tilde\O_{j-1}\setminus\O_j}(Z,t),$$
where $\chi_{\tilde\O_{j-1}\setminus\O_j}$  is the indicator function for the set $\tilde\O_{j-1}\setminus\O_j$. Then $F$ equals the indicator function of some Borel set $S\subset\Sigma$ and we let $u(Z,t):=\omega(Z,t, S)$. Consider \mbox{$(Z_0,t_0)\in E$} and  an index \mbox{$l\in \{1,...,k\}$}. Let in the following $$\mbox{$\Delta_i^l\in \F_l$ be a cube in the collection $\F_l$ which contains $(Z_0,t_0)$.}$$ Given $k_0\in\mathbb Z_+$ and $A_{\eta\Delta_i^l}^+=A_{c\eta l(\Delta_i^l),\Lambda}^+(Z_{\Delta_i^l},t_{\Delta_i^l})$ we let
$$\mbox{$\hat\Delta_i^l$, $\bar\Delta_i^l$, $S_{\bar\Delta_i^l}^-$ and $S_{\bar\Delta_i^l}^+$}$$
be as defined as in Remark \ref{gc+} relative to $\Delta_i^l$ and using $\eta_j:=2^{-k_j}$.
Hence, based on $(Z_0,t_0)\in E$ and an index $l\in \{1,...,k\}$, we have specified $\Delta_i^l$, $\hat \Delta_i^l$, $\bar \Delta_i^l$ and the surfaces
$S_{\bar\Delta_i^l}^-$ and $S_{\bar\Delta_i^l}^+$, and, by construction,
$$ \bar \Delta_i^l\subset\hat \Delta_i^l\subset  \Delta_i^l,\ \tilde  \Delta_i^l\subset  \Delta_i^l.$$
We first intend to prove that there exists $\beta>0$, depending only on $m,\kappa,M_1$,
such that if $\epsilon_0$ and $\eta_j=2^{-k_j}$ are chosen sufficiently small, then
\begin{eqnarray}\label{yy6}u(P+d_{\Delta_i^l}e_m)-u( P+l(\bar\Delta_i^l)e_m)\geq \beta,\ \forall P\in\{(x,\psi(x,y,t),y,y_m,t)\in \bar\Delta_i^l\},
\end{eqnarray}
where $e_m$ denotes the unit vector in $\mathbb R^{N+1}$ which points in the direction of $x_m$. Given $P\in\{(x,\psi(x,y,t),y,y_m,t)\in \bar\Delta_i^l\}$ we let
$$P^+:=P+d_{\Delta_i^l}e_m,\ P^-:=P+l(\bar\Delta_i^l)e_m,$$
and we want to estimate $u(P^+)-u(P^-)$. To start the proof we note that,  by construction,  $\tilde \Delta_i^l\subset\Delta_i^l$ and by using Lemma \ref{bourg} and Lemma \ref{lem4.7bol} we see that
there exists $ c_{\eta_1}=c_{\eta_1}(m,\kappa,M_1,\eta_1)$, $1\leq  c_{\eta_1}<\infty$, for $\eta_1$ small enough,
such that
\begin{eqnarray}\label{boundbelow}\omega(P^+,\tilde \Delta_i^l)\geq  c_{\eta_1}^{-1}.
\end{eqnarray}
To estimate $u( P^+)$  we first note that
\begin{equation}
\begin{split}
u(P^+)&\geq\iint_{\tilde \Delta_i^l}\chi_{\tilde{\mathcal{O}}_{l}\setminus\mathcal{O}_{l+1}}\, \d\omega(P^+,\bar Z,\bar  t)\\
 &=\omega(P^+,\tilde \Delta_i^l)-\omega( P^+,\tilde\Delta_i^l\cap \mathcal{O}_{l+1}),
 \end{split}
\end{equation}
as all terms in the definition of $F$ are non-negative. Consider $(\bar Z,\bar  t)\in \Delta_i^l$. Then, using Lemma \ref{lem4.5-Kyoto1ha} we have that
\begin{eqnarray}
K( P^+,\bar Z,\bar  t):=\lim_{\rho\to 0}\frac{\omega( P^+,\partial\Omega\cap\mathcal{B}_\rho(\bar Z,\bar  t))}{\omega(\partial\Omega\cap\mathcal{B}_\rho(\bar Z,\bar  t))},
\end{eqnarray}
exists for $\omega$-a.e. $(\bar Z,\bar  t)\in \Delta_i^l$, and
\begin{eqnarray}
K(P^+,\bar Z,\bar  t)\leq \frac c{\omega(\Delta_i^l)}\mbox{ whenever }(\bar Z,\bar  t)\in \Delta_i^l.
\end{eqnarray}
In the last conclusion we have also used Lemma \ref{T:doubling}. Using this estimate, and the fact that by construction $\tilde \Delta_i^l\subset\Delta_i^l$, we see that
\begin{eqnarray}
\omega( P^+,\tilde\Delta_i^l\cap \mathcal{O}_{l+1})\leq  \frac {c_{\eta_1}}{\omega(\Delta_i^l)}\omega(\tilde\Delta_i^l\cap \mathcal{O}_{l+1})\leq C_{\eta_1}\epsilon_0,
\end{eqnarray}
by the construction. In particular, using \eqref{boundbelow} we deduce
\begin{eqnarray}\label{yy10}
u(P^+)\geq  c_{\eta_1}^{-1}-C_{\eta_1}\epsilon_0.
\end{eqnarray}
To estimate $u( P^-)$ we write
\begin{eqnarray*}
u( P^-)&=&\iint_{Q_0\setminus \hat\Delta_i^l}F(\bar Z,\bar  t)\, \d\omega( P^-,\bar Z,\bar  t)+\iint_{\hat\Delta_i^l}F(\bar Z,\bar  t)\, \d\omega( P^-,\bar Z,\bar  t)\notag\\
&=:&I+{II}.
\end{eqnarray*}
Using Lemma \ref{lem4.5-Kyoto1} and the definition of $ P^-$ we see that
\begin{eqnarray}
|I|\leq \omega(P^-,Q_0\setminus \hat\Delta_i^l)\leq c\eta_2^\sigma,
\end{eqnarray}
for some $c=c(m,\kappa,M_1)$, $\sigma=\sigma(m,\kappa,M_1)\in (0,1)$.
We split ${II}$ as
\begin{eqnarray}
{II}= {II}_1+{II}_2+{II}_3,
\end{eqnarray}
where
\begin{eqnarray}
{II}_1&:=& \sum_{j=2}^l\iint_{\hat\Delta_i^l}1_{\tilde{\mathcal{O}}_{j-1}\setminus\mathcal{O}_j}\, \d\omega( P^-,\bar Z,\bar  t),\notag\\
{II}_2&:=&\sum_{j=l+2}^k\iint_{\hat\Delta_i^l}1_{\tilde{\mathcal{O}}_{j-1}\setminus\mathcal{O}_j}\, \d\omega( P^-,\bar Z,\bar  t),\\
{II}_3&:=&\iint_{\hat\Delta_i^l}1_{\tilde{\mathcal{O}}_{l}\setminus\mathcal{O}_{l+1}}\, \d\omega( P^-,\bar Z,\bar  t).\notag
\end{eqnarray}
Note that if $j\leq l$, then  $\hat\Delta_i^l\subset\Delta_i^l\subset\mathcal{O}_l\subset \mathcal{O}_j$ and $(\tilde{\mathcal{O}}_{j-1}\setminus\mathcal{O}_j)\cap \Delta_i^l=\emptyset$. Hence $II_1 = 0$. Furthermore,
\begin{equation}
\begin{split}
|II_2|&\leq\sum_{j=l+2}^k\omega( P^-,(\tilde{\mathcal{O}}_{j-1}\setminus\mathcal{O}_j)\cap \Delta_i^l)\\
&\leq c_{\eta_2}\sum_{j=l+2}^k\omega( A_{\hat\Delta_i^l}^+(\Pi(A_{\eta\Delta_i^l}^+)),(\tilde{\mathcal{O}}_{j-1}\setminus\mathcal{O}_j)\cap \Delta_i^l),
\end{split}
\end{equation}
by the Harnack inequality, see Lemma \ref{lem4.7bol}. Consider $(\bar Z,\bar  t)\in (\tilde{\mathcal{O}}_{j-1}\setminus\mathcal{O}_j)\cap \Delta_i^l$. Then, again using Lemma \ref{lem4.5-Kyoto1ha} we have that
\begin{eqnarray}
K( A_{\hat\Delta_i^l}^+(\Pi(A_{\eta\Delta_i^l}^+)),\bar Z,\bar  t):=\lim_{\rho\to 0}\frac{\omega( A_{\hat\Delta_i^l}^+(\Pi(A_{\eta\Delta_i^l}^+)),\partial\Omega\cap\mathcal{B}_\rho(\bar Z,\bar  t))}{\omega(\partial\Omega\cap\mathcal{B}_\rho(\bar Z,\bar  t))},
\end{eqnarray}
exists for $\omega$-a.e. $(\bar Z,\bar  t)\in \Delta_i^l$, and
\begin{eqnarray}
K(A_{\hat\Delta_i^l}^+(\Pi(A_{\eta\Delta_i^l}^+)),\bar Z,\bar  t)\leq \frac c{\omega(\Delta_i^l)}\mbox{ whenever }(\bar Z,\bar  t)\in \Delta_i^l.
\end{eqnarray}
In the last conclusion we have also used Lemma \ref{T:doubling}. Using these facts, and using  the definition of the good $\epsilon_0$ cover, we see that
\begin{equation}\label{yy8}
\begin{split}
|II_2|&\leq\frac {c_{\eta_2}}{\omega(\Delta_i^l)}\sum_{j=l+2}^k\omega((\tilde{\mathcal{O}}_{j-1}\setminus\mathcal{O}_j)\cap \Delta_i^l)\\
&\leq\frac {c_{\eta_2}}{\omega(\Delta_i^l)}\sum_{j=l+2}^k\omega(\mathcal{O}_{j-1}\cap \Delta_i^l)\leq \frac {c_{\eta_2}}{ \omega(\Delta_i^l)}\sum_{j=l+2}^k\epsilon_0^{j-1-l}\omega(\Delta_i^l)\leq C_{\eta_2}\epsilon_0.
\end{split}
\end{equation}
To estimate the term $II_3$ we first observe that $\hat \Delta_i^l\cap \tilde{\mathcal{O}}_{l}=\emptyset$ by the definition of $\tilde{\mathcal{O}}_{l}$. Hence,
\begin{eqnarray}
II_3&=&\omega( P^-,\hat\Delta_i^l\cap (\tilde{\mathcal{O}}_{j-1}\setminus\mathcal{O}_j))=0
\end{eqnarray}
and we can conclude that
\begin{eqnarray}\label{yy11}
u( P^-)\leq c\eta_2^\sigma+C_{\eta_2}\epsilon_0.
\end{eqnarray}
Combining \eqref{yy10} and \eqref{yy11} we can conclude, in either case, that
\begin{eqnarray}
u( P^+)-u( P^-)\geq c_{\eta_1}^{-1}-C_{\eta_1}\epsilon_0-c\eta_2^\sigma-C_{\eta_2}\epsilon_0.
\end{eqnarray}
We now first choose $\eta_1=\eta_1(m,\kappa,M_1)$ small. We then choose $\eta_2=\eta_2(m,\kappa,M_1)$ so that $c_{\eta_1}^{-1}=2c\eta_2^\sigma$. Having fixed $\eta_1$ and $\eta_2$ we
choose $\epsilon_0=\epsilon_0(m,\kappa,M_1)$ so that $c\eta_2^\sigma=2(C_{\eta_1}+C_{\eta_2})\epsilon_0$. By these choices we can conclude that there exists $0<\beta=\beta(m,\kappa,M_1)\ll 1$ such that
\begin{eqnarray}\label{yy12}
u(P+d_{\Delta_i^l}e_m)-u( P+l(\bar\Delta_i^l)e_m)\geq \beta,\ \forall P\in\{(x,\psi(x,y,t),y,y_m,t)\in \bar\Delta_i^l\}.
\end{eqnarray}
In particular, fix $P\in\{(x,\psi(x,y,t),y,y_m,t)\in \bar\Delta_i^l\}$. Then \eqref{yy12} implies
\begin{eqnarray}
\beta^2\leq cl(\Delta_i^l)\int_{P^-}^{P^+}|\partial_{x_m}u(x,x_m,y,y_m,t)|^2\, \d x_m.
\end{eqnarray}
Integrating with respect to $P\in  \bar\Delta_i^l$ we see that
\begin{eqnarray}\label{yy14}
\beta^2\sigma(\bar\Delta_i^l)\leq cl(\Delta_i^l)\iiint_{R_{\bar\Delta_i^l}}|\nabla_Xu(Z,t)|^2\, \d Z\d t,
\end{eqnarray}
where $R_{\bar\Delta_i^l}$ is a naturally defined Whitney type region. Recall that $\sigma(\bar\Delta_i^l)\approx \sigma(\Delta_i^l)$. In particular, by an elementary connectivity/covering argument we see that
\begin{eqnarray*}
\quad c^{-1}\beta^2\leq \iiint_{\tilde W_{\Delta_i^l}} |\nabla_Xu|^2\delta^{2-{\bf q}}\, dZdt,
\end{eqnarray*}
where $\tilde W_{\Delta_i^l}$ is a natural Whitney type region associated to $\Delta_i^l$, $\delta=\delta(Z,t)$ is the distance from $(Z,t)$ to $\Sigma$,  and $c=c(m,M_1,\kappa)$, $1\leq c<\infty$. Consequently, for
 $(Z_0,t_0)\in E$ fixed we find, by summing over all indices $i$, $l$, such that
$(Z_0,t_0)\in \Delta_i^l$, that
\begin{eqnarray}
\quad\quad c^{-1}\beta^2k\leq\sum_{i,l: (Z_0,t_0)\in \Delta_i^l}  \biggl (\iiint_{\tilde W_{\Delta_i^l}} |\nabla_Xu|^2\delta^{2-{\bf q}}\, \d Z\d t\biggr ).
\end{eqnarray}
The construction can be made so that the Whitney type regions $\{\tilde W_{\Delta_i^l}\}$  have bounded overlaps measured by a constant depending only on $m$, $M_1$, and such that $W_{\Delta_i^l}\subset T_{cQ_0}$ for some $c=c(m,M_1)$, $1\leq c<\infty$, where $T_{cQ_0}$ is defined in \eqref{eq2.box}. Hence, integrating with respect to $\d \sigma$, we deduce that
\begin{eqnarray}\label{yy13+}
\quad c^{-1}\beta^2k\sigma(E)\leq \biggl (\iiint_{ T_{cQ_0}} \bigl |\nabla_Xu|^2\delta\, \d Z\d t\biggr )
\end{eqnarray}
where, resolving the dependencies, $c=c(m,\kappa,M_1)$, $1\leq c<\infty$. Furthermore,
$$k\approx \frac {\log(\delta_0)}{\log (\epsilon_0)},$$
where $\eta$ and $\epsilon_0$ now have been fixed, and $\delta_0$ is at our disposal. Given $\Upsilon$
we obtain the conclusion of the lemma by specifying $\delta_0=\delta_0(m,\kappa,M_1,\Upsilon)$  sufficiently small. This completes the proof of Lemma \ref{lemmacruc}.
\qed

\end{document}